\def\ps@pprintTitle{%
 \let\@oddhead\@empty
 \let\@evenhead\@empty
 \def\@oddfoot{}%
 \let\@evenfoot\@oddfoot}
\newcolumntype{P}[1]{>{\centering\arraybackslash}p{#1}}
\def\eqref#1{equation~\ref{#1}}
\def\1{\bm{1}}
\DeclareMathAlphabet{\mathsfit}{\encodingdefault}{\sfdefault}{m}{sl}
\SetMathAlphabet{\mathsfit}{bold}{\encodingdefault}{\sfdefault}{bx}{n}
\newcommand{\E}{\mathbb{E}}
\newcommand{\R}{\mathbb{R}}
\newcommand{\KL}{D_{\mathrm{KL}}}
\newcommand{\beps}{{\boldsymbol{\epsilon}}}
\newcommand{\X}{{\textbf{X}}}
\newcommand{\W}{\mathbf{W}}
\newcommand{\e}{\mathbf{e}}
\newcommand{\di}{{\mathbf{D}}}
\newcommand{\sdi}{{\mathbf{d}}}
\newcommand{\II}{{\mathbf{I}}}
\newcommand{\g}{\textbf{G}}
\newcommand{\I}{\textbf{I}}
\newcommand{\eg}{\widehat{\g}}
\newcommand{\rg}{\textbf{G}}
\newcommand{\seg}{\widehat{\textbf{g}}}
\newcommand{\srg}{\textbf{g}}
\newcommand{\bg}{\underline{\textbf{G}}}
\newcommand{\sbg}{\underline{\textbf{g}}}
\newcommand{\bPsi}{\boldsymbol{\Psi}}
\newcommand{\Vt}{\boldsymbol{\Lambda}^{-\frac{1}{2}}}
\newcommand{\sVtori}{{\tilde{\textbf{v}}}}
\newcommand{\M}{{\textbf{M}}}
\newcommand{\D}{\mathcal{D}}
\newcommand{\z}{\textbf{z}}
\newcommand{\y}{\textbf{y}}
\newcommand{\Z}{\textbf{Z}}
\newcommand{\wi}{\textbf{v}}
\newcommand{\m}{\textbf{m}}
\newcommand{\bxi}{\boldsymbol{\xi}}
\newcommand{\btheta}{{\boldsymbol{\theta}}}
\newcommand{\balpha}{{\boldsymbol{\alpha}}}
\newcommand{\x}{{\mathbf{x}}}
\newcommand{\lpower}{\boldsymbol{\Lambda}}
\newcommand{\ADAM}{{\textsc{Adam}$^3$~}}
\newcommand{\DADAM}{{\textsc{Dadam}$^3$~}}
\newcommand{\CADAM}{{\textsc{Cadam}$^3$~}}
\newcommand{\DSGD}{{\textsc{Dosg}~}}
\newcommand{\CSGD}{{\textsc{Cosg}~}}
\newtheorem{theorem}{Theorem}[section]
\newtheorem{corollary}[theorem]{Corollary}
\newtheorem{lemma}[theorem]{Lemma}
\newtheorem{assum}{Assumption}
\newtheorem{deff}[theorem]{Definition}
\newtheorem{rmk}[theorem]{Remark}
\begin{document}

\begin{frontmatter}

\title{ A Decentralized Adaptive Momentum Method for Solving a Class of Min-Max Optimization Problems}

\author[mymainaddress]{Babak Barazandeh\corref{mycorrespondingauthor}}
\cortext[mycorrespondingauthor]{Corresponding author}
\ead{bbarazandeh@splunk.com}

\author[mysecondaryaddress]{Tianjian Huang}

\author[mythirdddress]{George Michailidis}

\address[mymainaddress]{Splunk}

\address[mysecondaryaddress]{University of Southern California}
\address[mythirdddress]{University of Florida}

\begin{abstract}

Min-max saddle point games have recently been intensely studied, due to their wide range of applications, including training Generative Adversarial Networks~(GANs). However, most of the recent efforts for solving them are limited to special regimes such as convex-concave games. Further, it is customarily assumed that the underlying optimization problem is solved either by a single machine or in the case of multiple machines connected in centralized fashion, wherein each one communicates with a central node. The latter approach becomes challenging, when the underlying communications network has low bandwidth. In addition, privacy considerations may dictate that certain nodes can communicate with a subset of other nodes. Hence, it is of interest to develop methods that solve min-max games in a decentralized manner. To that end, we develop a decentralized adaptive momentum (ADAM)-type algorithm for solving min-max optimization problem under the condition that the objective function satisfies a Minty Variational Inequality condition, which is a generalization to convex-concave case. The proposed method overcomes shortcomings of recent non-adaptive gradient-based decentralized algorithms for min-max optimization problems that do not perform well in practice and require careful tuning. In this paper, we obtain non-asymptotic rates of convergence of the proposed algorithm (coined \DADAM) for finding a (stochastic)~first-order Nash equilibrium point and subsequently evaluate its performance on training GANs. The extensive empirical evaluation shows that \DADAM outperforms recently developed methods, including decentralized optimistic stochastic gradient for solving such min-max problems.
\end{abstract}

\begin{keyword}
first-order Nash equilibria, stationary points, distributed optimization, non-convex min-max problems. variational inequality
\end{keyword}

\end{frontmatter}

\section{Introduction}
Recent growth in the size and complexity of data and the emergence of new machine learning problems have resulted in the need to solve large scale optimization problems. From early on, stochastic gradient descent (SGD) and its variants have been the major tool for solving such large-scale problems~\cite{robbins1951stochastic,polyak1992acceleration,nemirovski2009robust,moulines2011non}. However, in order to ensure convergence, SGD requires a decaying step-size to zero which results in slow convergence in practice~\cite{johnson2013accelerating}. To overcome this issue, different gradient based adaptive methods that automatically adjust the step-size in each iteration, such as AdaGrad~\cite{duchi2011adaptive}, Adadelta~\cite{zeiler2012adadelta} and RMSprop~\cite{hinton2012lecture} have been proposed in the literature. Recently, \cite{kingma2014adam} combined the adaptive idea with the momentum method and proposed the widely used Adam algorithm. 
Despite wide applicability of adaptive and momentum based methods, it is worth noting that they are mainly designed for solving classical convex~\cite{levy2017online,kavis2019unixgrad,ene2020adaptive} or non-convex optimization problems~\cite{zou2018weighted,ward2019adagrad,zou2019sufficient}. Further, these methods were originally operating in a centralized fashion; however, recent developments ~\cite{nazari2019dadam,shi2015extra,nedic2017achieving,tang2018d,jiang2017collaborative} have adapted them to decentralized problems.

New machine learning problems, such as training GANs, require solving a new class of min-max saddle point games~\cite{razaviyayn2020nonconvex, barazandeh2019training}. They correspond to a zero-sum two player game, wherein one player's goal is to increase the objective function, while the other's to decrease it. A solution to this problem corresponds to finding a Nash equilibrium point~\cite{nash1950equilibrium}, for which several algorithms including Mirror-Prox~\cite{nemirovski2004prox} have been proposed, provided that the nature of the objective function is convex-concave. In general, solving min-max optimization problem is difficult which keeps other approaches attractive~\cite{barazandeh2021efficient, barazandeh2018behavior, barazandeh2017robust, bastani2018fault}. This is due to the fact that finding a Nash equilibrium point is computationally NP-hard in general~\cite{nouiehed2019solving}; therefore, the focus has shifted to finding a first-order Nash point (see upcoming section for more details) and numerous algorithms have been proposed to find such a point in non-convex-concave games~\cite{ostrovskii2020efficient}, and non-convex-non-concave games under Polyak-\L{}ojasiewicz~(PL)~\cite{yang2020global} and Minty Variational Inequality~(VI) ~\cite{liu2019towards} conditions. However, most of the proposed algorithms are centralized in nature. Thus, the goal of this work is to design a \textit{decentralized adaptive momentum algorithm} for solving a class of non-convex-non-concave min-max games satisfying the Minty VI condition. Due to the adaptive structure of the proposed algorithm, it automatically updates the step-size at each iteration and hence it does not require a decaying step-size.      
The remainder of the paper is organized as follows. Section~\ref{sec:related_work} provides a brief overview of related work, Section~\ref{sec:Problem_for} presents the structure of the decentralized min-max problem, while Section~\ref{sec:algorithm} introduces the proposed algorithm.  Section~\ref{sec:CA} establishes a non-asymptotic convergence rate for the algorithm and finally, Section~\ref{sec:NS} evaluates its empirical performance.

\section{Related Work}\label{sec:related_work}

\subsection{Decentralized Versus Centralized Minimization Problems} In centralized optimization problems, there exists a central node which holds information on the model and updates the model's parameters (for example, the weights of a neural network) by appropriately aggregating information (e.g., local gradients calculated by each node) received from all other nodes~\cite{suresh2017distributed,yuan2016convergence}). A centralized setting is not desirable whenever there are network bandwidth limitations, or privacy considerations~\cite{lian2017can, liu2019decentralized, tsianos2012consensus}. On the other hand, in a decentralized setting~\cite{lian2017can, tsitsiklis1986distributed,nedic2009distributed,shi2015extra}, every node possesses its own local copy of the model and nodes are allowed to communicate with any subset of nodes they desire. This setup addresses both network bandwidth issues and data privacy concerns~\cite{ram2009asynchronous,chen2012diffusion,ling2012decentralized,olfati2007consensus}. Further, decentralized approaches are used in a wide range of other applications, including smart grid management problems~\cite{kekatos2012distributed,dall2013distributed}, wireless sensor networks~\cite{predd2006distributed,zhao2002information} and dictionary learning~\cite{wai2015consensus,zhao2016distributed,daneshmand2019decentralized}.

\subsection{Min-Max Saddle Point Problems}
For such problems with a convex-concave objective function, numerous algorithms including Primal-Dual based methods~\cite{hamedani2018primal,hamedani2018iteration}, Frank-Wolfe type ~\cite{gidel2017frank,abernethy2017frank} and optimistic mirror descent algorithms~\cite{mertikopoulos2018optimistic,mokhtari2020unified} have been proposed to find a Nash equilibrium point. As previously mentioned, finding such a point is NP-hard in general and as a result recent work has focused on first-order Nash equilibrium points~\cite{nouiehed2019solving}. For example,~\cite{rafique2018non} studies the problem for a weakly-convex-concave regime, while \cite{ostrovskii2020efficient, nouiehed2019solving,lu2020hybrid} examine a broader class of non-convex-concave regimes.~\cite{nouiehed2019solving} and~\cite{yang2020global} represent early work aimed to solve the saddle point game for a non-convex-non-concave objective function by assuming that the latter satisfies a Polyak-\L{}ojasiewicz~(PL) condition in the argument of one of the players. In this work, we study another class of non-convex-non-concave games, wherein the objective function satisfies the Minty Variational Inequality~(VI) condition~\cite{cottle2009linear}.

The strong recent interest in solving min-max saddle point games is due to emerging machine learning applications. One of them is training GANs ~\cite{goodfellow2014generative, arjovsky2017wasserstein} that have exhibited very good performance in high resolution image and creative language generation~\cite{wang2018high} and also common sense reasoning~\cite{saeed2019creative}. 
GANs aim to train a generative model that outputs samples that are as close as possible to real samples in some metric that captures proximity of the respective distributions. To that end, GANs convert learning a generative model to a zero-sum game comprising of two players: the generator and the discriminator. The generator aims to generate real-like samples from random (usually Gaussian) noise input data that resemble closely the real samples \cite{heusel2017gans}, while the discriminator aims to distinguish the artificial data generated by the generator from the real samples. This game is commonly formulated through a min-max optimization problem and in practice both the generator and the discriminator are modeled using Deep Neural Networks (DNN)~\cite{wang2017generative}.
Further, different choices for the loss function have been used, resulting in generic~\cite{goodfellow2014generative}, Wasserstein~\cite{arjovsky2017wasserstein}, second-order Wasserstein~\cite{feizi2017understanding}, least squares GANs~\cite{mao2017least} and $f$-GANs~\cite{nowozin2016f}. 

Besides GANs, many other problems in machine learning and signal processing require solving a min-max saddle point game. As an example, consider recent efforts focusing on fair machine learning~\cite{madras2018learning}. Recent studies show that machine learning algorithms can result in systematic discrimination of people belonging to different minority groups~\cite{xu2018fairgan,madras2018learning}. A number of proposed methods in the literature aim to address this issue based on an adversarial framework, that requires solving a min-max saddle point game~\cite{madras2018learning}. Similar approaches are used for resource allocation problems in wireless commutation systems, wherein the goal is to maximize the minimum transmission rate among all users, which also requires solving min-max games~\cite{liu2013max}.

\subsection{Decentralized Min-Max Problems} 

Such problems have been studied when the objective function is assumed to be convex-concave and multiple algorithms such as sub-gradient based methods~\cite{mateos2015distributed} and primal-dual methods~\cite{wai2018multi} have been proposed. ~\cite{tsaknakis2020decentralized} studies the problem when the objective function is non-convex-(strongly) concave and proposes a gradient-based method to find the first-order Nash point. There have been few efforts to study this problem for a broader class of non-convex-non-concave games under the Minty VI condition, which resulted in algorithms with theoretical guarantees for convergence to first-order Nash equilibrium points. As an example,~\cite{proximal} proposed a proximal point-type method for solving such a game. However, that paper assumes that the resulting sub-problem has a closed-form solution which does not often hold in practice. To overcome this issue,~\cite{liu2019decentralized} proposed a non-adaptive gradient-based method for finding a first-order Nash point. However, non-adaptive methods have been very slow and generate low performance solutions in solving optimization problems that include deep neural networks~\cite{duchi2011adaptive,mcmahan2010adaptive}. This might be the reason~\cite{liu2019decentralized} did not use their proposed non-adaptive method in the numerical experiments involving training of GANs. In this work, we propose an adaptive momentum method for solving a broad class of non-convex-non-concave decentralized min-max saddle point games wherein the objective function satisfies the Minty VI condition.

\section{Preliminaries and Problem Formulation}\label{sec:Problem_for}
A general min-max saddle point game is defined as
\begin{equation}\label{eq:main_game}
\min\limits_{\btheta }\max\limits_{\balpha} F(\btheta, \balpha),
\end{equation} 


where $\btheta \in \mathbb{R}^{p_1}, \balpha \in \mathbb{R}^{p_2}$ and $F(\btheta,\balpha)$ is non-convex-non-concave, i.e., it is non-convex in $\btheta$ for any given $\balpha$ and is non-concave in $\balpha$ for any given $\btheta$.  This problem can be considered as a two player game in which one player's goal is to increase the value of the objective function, while the other player aims to decrease it. The objective of this game is to find a point, such that no player can improve her own payoff by solely changing her own strategy; such a point corresponds to a Nash equilibrium~\citep{nash1950equilibrium} formally defined next.

\begin{deff}(NE) A point $(\btheta^*, \balpha^*) \in \mathbb{R}^{p_1} \times \mathbb{R}^{p_2}$ is a Nash equilibrium~(NE) of Game~\ref{eq:main_game} if 
\begin{equation*}
F(\btheta^*, \balpha) \leq  F(\btheta^*,\balpha^*) \leq F(\btheta, \balpha^*), \; \forall (\btheta, \balpha) \in \mathbb{R}^{p_1} \times \mathbb{R}^{p_2}.
\end{equation*}
\end{deff}

The above definition implies that $\btheta^*$ is a global minimum of $F(\cdot, \balpha^*)$ and $\balpha^*$ is a global maximum of $F(\btheta^*, \cdot)$. In the convex-concave regime where $F(\btheta,\balpha)$ is convex in $\btheta$ for any given $\balpha$ and concave in $\balpha$ for any give $\btheta$, a NE always exists~\citep{jin2019minmax} and there are several algorithms for finding such a point~\citep{gidel2017frank,hamedani2018iteration}. However, finding a Nash equilibrium is difficult in general non-convex-non-concave setting~\citep{daskalakis2017training,jin2019minmax} and it may not even exist~\citep{farnia2020gans}. As a result, since we are considering the general non-convex-non-concave  regime, we focus on finding a \textit{first-order Nash equilibrium} (FNE) point~\citep{nouiehed2019solving,barazandeh2020solving} defined next.

\begin{deff}\label{deff:FN}
(FNE)~A point $(\btheta^*, \balpha^*) \in \mathbb{R}^{p_1} \times \mathbb{R}^{p_2}$ is a first-order Nash equilibrium point of the Game in equation~\ref{eq:main_game}, if $ \nabla_{\btheta}F(\btheta,\balpha) = 0$ and $\nabla_{\balpha}F(\btheta,\balpha) = 0$. 
\end{deff}
Based on the above definition, also used in~\citep{pang2016unified,pang2011nonconvex}, at the FNE point, each player satisfies the first-order optimality condition of its own objective function when the strategy of the other player is fixed. 

The key objective of this work is to solve a stochastic and decentralized version of the problem in~\eqref{eq:main_game}, defined as
\begin{equation}\label{eq:main_game_s}
\min\limits_{\btheta }\max\limits_{\balpha} F(\btheta, \balpha) = \frac{1}{M} \sum_{i = 1}^M F_i(\btheta, \balpha) 
\end{equation} 
wherein $F_i(\btheta, \balpha) = \E_{\bxi \sim \D_i}[f(\btheta, \balpha;\bxi)]$ is the loss function, $\bxi$ is a random variable corresponding to the data samples drawn to calculate the gradients of the function $f$ and $\D_i$ is a predefined distribution for node $i \in \{1,\cdots, M\}$. As mentioned in~\cite{lian2017can}, there are two different strategies to achieve~\eqref{eq:main_game_s}; assuming that all distributions $\D_i$'s are the same as $\D$ or each node has its own sampling strategy. In this work, similar to~\cite{liu2019decentralized}, we will work with the first strategy and assume $F_i(\btheta, \balpha) = \E_{\bxi \sim \D}[f(\btheta, \balpha;\bxi)], \forall \ i $. In this setting, each node has its own (local) copy of the model and updates it internally which preserves its privacy. However, nodes can connect to their selected neighbours and exchange information with them using a mixing matrix $\W$. Specifically, consider $M$ nodes that are connected to each other by a graph $\mathcal{G} = (\mathcal{V}, \mathcal{E})$, where $\mathcal{V} = \{1,\cdots, M\}$ and $\mathcal{E} \subset \mathcal{V} \times \mathcal{V}$ correspond to the vertex and edge sets of this graph, respectively. The element in the $i^{th}$ row and $j^{th}$ column of matrix $\W$, denoted by $[\W]_{ij}$, indicate how nodes $i$ and $j$ communicate. If $[\W]_{ij}=0$, then the corresponding nodes are disconnected. In the decentralized set-up, node $i$ in graph $\mathcal{G}$ is only allowed to communicate with those in its neighborhood, $\mathcal{N}_i = \{j | [\W]_{ij} > 0\}.$

In practice, we use iterative algorithms for finding a FNE of stochastic games. As a result, we evaluate the performance of different stochastic iterative algorithms using the following approximate-FNE.

\begin{deff}[$\epsilon$-Stochastic First-Order Nash Equilibrium (SFNE)] \label{deff:EFN} A random variable $(\btheta^*, \balpha^*)$ is an approximate SFNE~($\epsilon$-SFNE) point of the game defined in~\eqref{eq:main_game_s}, if 
$
\E\left[\|\nabla F(\btheta^*, \balpha^*)\|^2 \right] \leq \epsilon^2,
$
where the expectation is taken over the distribution of the random variable $(\btheta^*, \balpha^*)$. 
\end{deff}

The randomness of the desired points in Definition~\ref{deff:EFN} follows from the fact that in practice, we use iterative algorithms to find them and these algorithms have only access to stochastic gradients of the objective function.
In this work, the goal is to find an $\epsilon-$SFNE point for Game~\ref{eq:main_game_s} using an iterative method based on adaptive momentum.

\textbf{Notation:} Throughout the paper, we define $\y = (\btheta,\balpha) \in \mathbb{R}^{p_1} \times \mathbb{R}^{p_2}$, $d = p_1 + p_2$ and denote the objective functions by $F(\y)$ and $f(\y;\bxi)$. Further, we introduce $ \nabla F(\y) \equiv [\nabla_{\btheta}F(\btheta,\balpha), -\nabla_{\balpha}F(\btheta,\balpha)]$ and $\nabla f(\y;\bxi) \equiv [\nabla_{\btheta}f(\btheta,\balpha;\bxi), -\nabla_{\balpha}f(\btheta,\balpha;\xi)] $ to denote their corresponding gradients.

\section{A Decentralized Adaptive Momentum Algorithm}\label{sec:algorithm}
We start by introducing the proposed algorithm, coined ADAptive Momentum Min-Max~({\ADAM}) on a single computing node, in order to gain insight into its workings. Subsequently, we present its Decentralized variant.

\textbf{Single Computing Node:} As detailed in Algorithm~\ref{alg:0}, at iteration $k$, \ADAM generates two sequences $\z_k$ and $\x_k$, where $\x_k$ is an ancillary variable and the (stochastic) gradient is calculated at $\z_k$. After calculating $\z_k$, a mini-batch of data of size $m$ is obtained -$\bxi_{k} = (\bxi_{k}^{1}, \cdots, \bxi_{k}^{m})$- that is used to calculate the stochastic gradient of the objective function at point $\z_k$, i.e., $\seg_{k} =\frac{1}{m} \sum_{j = 1}^m \nabla f(\z_{k};\bxi_{k}^{j})$. Subsequently, $\m_k$ and $\sVtori_k$, the exponential moving averages of the gradient and the squared gradient, respectively, are computed. Finally, the variable $\x_k$ is updated by the scaled momentum ${\sVtori_{k}^{-\frac{1}{2}}} \odot \m_{k}$. This method can be considered as a combination of the extra-gradient method~\citep{iusem2017extragradient} and AMSGrad~\citep{reddi2019convergence} used for minimization problems. The focus of this paper is on the decentralized problem (Algorithm~\ref{alg:1}) and we only mention the most related corollary for Algorithm~\ref{alg:0} in the sequel. Theoretical results and performance of \ADAM on a single computing node are given in~\cite{barazandeh2021solving}. 
\begin{corollary}(Rephrased from~\cite{barazandeh2021solving})\label{cor:algorhtim1}
Algorithm~\ref{alg:0} requires a total of $\mathcal{O} (\epsilon^{-4})$ gradient evaluations of the objective function to find an $\epsilon$-SFNE point  of Game~\ref{eq:main_game}.
\end{corollary}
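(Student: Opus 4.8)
The plan is to treat this as a stochastic first-order complexity bound and obtain it by combining the extra-gradient machinery with the adaptive AMSGrad-style analysis under the Minty VI condition; since the corollary is a restatement of the single-node result of \cite{barazandeh2021solving}, I would follow that analysis and track how the total number of gradient evaluations $K \cdot m$ scales with $\epsilon$. First I would fix the standing assumptions: $L$-smoothness of $F$, bounded stochastic gradients $\|\nabla f(\y;\bxi)\| \le G$ (which adaptive methods require in order to control the preconditioner), bounded variance $\E\|\nabla f(\y;\bxi) - \nabla F(\y)\|^2 \le \sigma^2$, and the Minty VI condition that there exists $\y^*$ with $\langle \nabla F(\y), \y - \y^* \rangle \ge 0$ for every $\y$. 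The quantity I ultimately want to bound is $\min_k \E[\|\nabla F(\z_k)\|^2]$, or equivalently $\E[\|\nabla F(\z_R)\|^2]$ for an index $R$ drawn uniformly at random, since Definition~\ref{deff:EFN} measures the squared gradient norm.

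The core of the argument is a potential-function (Lyapunov) telescoping. I would introduce the preconditioned distance $\Phi_k = \|\x_k - \y^*\|_{\sVtori_k^{1/2}}^2$ using the adaptive scaling as a weighted norm, and first establish two-sided bounds on $\sVtori_k^{-1/2}$: its entries are uniformly bounded above (from the AMSGrad max/EMA construction together with the $\le G$ bound) and bounded below away from zero (by initialization or a small additive constant), so the weighted norm is equivalent to the Euclidean norm up to constants. Next I would expand $\Phi_{k+1}$ using the update $\x_{k+1} = \x_k - \alpha\, \sVtori_k^{-1/2} \odot \m_k$, which produces an inner-product term $\langle \m_k, \x_k - \y^* \rangle$ and a squared step term. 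The extra-gradient structure is what makes the inner-product term usable: because the gradient is evaluated at the extrapolated point $\z_k$ rather than at $\x_k$, I can decompose $\langle \m_k, \x_k - \y^*\rangle$ into $\langle \nabla F(\z_k), \z_k - \y^*\rangle$ plus (i) a momentum-lag error from the exponential averaging $\m_k = \beta_1 \m_{k-1} + (1-\beta_1)\seg_k$, (ii) a stochastic error from $\seg_k - \nabla F(\z_k)$, and (iii) an extrapolation error $\langle \nabla F(\z_k), \x_k - \z_k\rangle$ controlled by $L$-smoothness, and then invoke Minty VI to sign the surviving $\langle \nabla F(\z_k), \z_k - \y^*\rangle \ge 0$ term. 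Telescoping the per-step inequality over $k = 1,\dots,K$ then yields a bound of the schematic form $\sum_{k=1}^K \E\|\nabla F(\z_k)\|^2 \le C\big(\Phi_1 + K\,\sigma^2/m + \text{(momentum/smoothness remainders)}\big)$.

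Completing the count is then a matter of balancing the two error sources. Dividing by $K$ gives $\min_k \E\|\nabla F(\z_k)\|^2 \le \OO(1/K) + \OO(\sigma^2/m)$; choosing $K = \OO(\epsilon^{-2})$ iterations and a mini-batch of size $m = \OO(\epsilon^{-2})$ drives the right-hand side below $\epsilon^2$, and the total number of stochastic gradient evaluations is $K\cdot m = \OO(\epsilon^{-4})$, which is the claimed rate. I expect the main obstacle to be the descent step under only the Minty VI condition: unlike the convex-concave case there is no monotonicity to lean on, so the sign of $\langle \nabla F(\z_k), \z_k - \y^*\rangle$ must be extracted solely from Minty VI, and this has to survive the simultaneous perturbation by the adaptive preconditioner (which does not commute with the inner product), the momentum lag, and the extrapolation error. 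Bounding the accumulated momentum and extrapolation remainders carefully enough that they do not degrade the $\OO(\epsilon^{-4})$ scaling — rather than contributing a worse power of $\epsilon^{-1}$ — is the delicate part, and is precisely where the AMSGrad-style preconditioner bounds and the extra-gradient cancellation established in \cite{barazandeh2021solving} are needed.
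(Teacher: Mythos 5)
Your outline matches the approach the paper takes: the paper itself defers the single-node proof to the cited companion work, but its in-text proof of Theorem~\ref{thm:revised_main} (of which Algorithm~\ref{alg:0} is the $M=1$ special case) follows exactly your strategy --- a preconditioned distance $\|\lpower_{k-1}^{1/4}\circ(\X_k-\X_*)\|_F^2$ as Lyapunov function, two-sided bounds $G_0\le\|\sVtori_{i,k}^{1/2}\|\le G_\infty$ on the adaptive scaling, the Minty VI condition signing $\langle \x_*-\z_{i,k},\srg_{i,k}\rangle\le 0$, separate control of the momentum-lag, stochastic, and extrapolation remainders, and the final balance $N\sim\mathcal{O}(\epsilon^{-2})$, $m\sim\mathcal{O}(\epsilon^{-2})$ giving $Nm=\mathcal{O}(\epsilon^{-4})$. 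Your proposal is correct and takes essentially the same route.
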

\begin{center}
\begin{algorithm}[H]
    \SetAlgoLined
 	\SetKwInOut{Input}{Input}
	\SetKwInOut{Output}{Output}
\Input{$\{\beta_{1,k}\}_{k=1}^N, \beta_2, \beta_3 \in [0,1)$, $m \in \mathbb{N}$, and $ \eta \in \R_+$\;}
\BlankLine
Initialize $\z_{0} = \x_{0}  = \m_{0}= \wi_{0} = \textbf{d}_{0}= \bm{0}_{d}$. 
\BlankLine
		\For{$ k =1:N$}
	{
	 
	    $\z_{k} =  \x_{k-1}  - \eta \textbf{d}_{k-1}$; 
	    
	    Draw $\bxi_{k} = (\bxi_{k}^{1}, \cdots, \bxi_{k}^{m})$ from $\mathcal{D}$, and set $\seg_{k} =\frac{1}{m} \sum_{j = 1}^m \nabla f(\z_{k};\bxi_{k}^{j})$ 
	    
	    $\m_{k} = \beta_{1,k} \m_{k-1} + (1-\beta_{1,k}) \seg_{k}$ 
	    
	    $ \wi_{k} =   \beta_2 \wi_{k-1} + (1-\beta_2) \seg_{k} \odot \seg_{k}$
	    
	    $ \sVtori_{k} = \beta_3\sVtori_{k-1} + (1-\beta_{3})\max^*(\sVtori_{k-1}, \wi_{k} )$ 
	    
	    $\textbf{d}_{k} =  {\sVtori_{k}^{-\frac{1}{2}}} \odot \m_{k}$ 
	    
	    $\x_{k} =  \x_{k-1} - \eta \;\textbf{d}_{k}$
	}
     \caption{ADAptive Momentum Min-Max~({\ADAM})}
     \label{alg:0}
     $\odot$: Element-wise vector multiplication, $^*$ : \text{Element-wise max operator}
    \end{algorithm}
\end{center}

\textbf{Multiple Computing Nodes:} 
%
In this setting, each node $i$ at iteration $k$, has a local copy of all key variables, denoted  by
$\x_{i,k}, \z_{i,k}, \m_{i,k}$, and $\sVtori_{i,k}$, and samples a mini-batch of data of size $m$, $\bxi_{k,i} = (\bxi_{k,i}^{1}, \cdots, \bxi_{k,i}^{m})$. It uses the mini-batch to calculate the (stochastic) gradient of the objective function, i.e., $\seg_{i,k} = \frac{1}{m} \sum_{j = 1}^m f(\z_{i,k};\bxi_{k,i})$.   

To simplify the presentation, we use the following notation
\begin{equation}\label{eq:nota}
\begin{aligned}[c]
\X_k &= [\x_{1,k},\cdots, \x_{M,k}], \\
\Z_k &= [\z_{1,k},\cdots, \z_{M,k}],\\
\textbf{V}_k &= \left[\wi_{1,k},\cdots, \wi_{M,k}\right]
,\\
\boldsymbol{\Lambda}_k &= \left[\sVtori_{1,k},\cdots, \sVtori_{M,k}\right]
,
\end{aligned}
\qquad\qquad
\begin{aligned}[c]
\M_k &= \left[\m_{1,k},\cdots, \m_{M,k}\right],
\\
\di_k &= \left[\sdi_{1,k}\cdots, \sdi_{M,k}\right],
\\
\eg_k &=\left[\seg_{1,k};,\cdots, \seg_{M,k}\right],
\end{aligned}
\end{equation}
which corresponds to concatenations of the variables from all nodes at iteration $k$.
 
Algorithm~\ref{alg:1} depicts the steps of the proposed Decentralized ADAptive Momentum Min-Max~(\DADAM) algorithm.

\begin{center}
    \begin{algorithm}[H]\label{alg:1}
        \SetAlgoLined
     	\SetKwInOut{Input}{Input}
    	\SetKwInOut{Output}{Output}
    \Input{$\{\beta_{1,k}\}_{k=1}^N, \beta_2, \beta_3 \in [0,1)$, neighbor list $\mathcal{N}_i$, mixing matrix $\W$ and a step-size $ \eta \in \R_+$}
    \BlankLine
    Initialize $\Z_0 = \X_0  = \M_0 =  \textbf{V}_0 =  \di_0 = \bm{0}_{d}$. 
    \BlankLine
    		\For{$ k =1:N$}
    	{
    	 
    	    $\Z_k =  (\X_{k-1}- \eta \di_{k-1})\W^t$ 
    	    
    	    $\M_k = \beta_{1,k} \M_{k-1} + (1-\beta_{1,k})\eg_{k}$
    	    
    	    $\textbf{V}_k = \beta_{2} \textbf{V}_{k-1} + (1-\beta_2) \eg_{k} \odot \eg_{k}$
    	    
    	    $\boldsymbol{\Lambda}_k = \beta_3 \boldsymbol{\Lambda}_{k-1} + (1-\beta_3) \max^{*} (\boldsymbol{\Lambda}_{k-1}, \textbf{V}_k)$
    	    
    	    
    	    $\di_k ={}^{*}\Vt_k \circ \M_k$
    	    
    	    $\X_k = (\X_{k-1} - \eta \di_{k})\W^t$
        }
         \caption{Decentralized ADAptive Momentum Min-Max~({\DADAM})}
         $\odot$: Hadamard product, ${}^{*}$: power and max operators are element-wise
        \end{algorithm}
\end{center}
The following remarks about \DADAM are in order:
\\
\textbf{(1)}
The power and the maximum operators are applied element-wise. We may also add a small positive constant $\epsilon$ to each element of $\textbf{V}_k$ to prevent division by zero~\citep{kingma2014adam}. Further, each node samples a mini-batch of size $m$ to estimate the gradient's value and generate the final $\eg_k$. \\
\textbf{(2)}
\DADAM computes adaptive learning rates from estimates of the second moments of the gradients, similar to the approach developed by \citep{nazari2019dadam} for minimization problems. In particular, compared to AMSGrad, it uses a larger learning rate and decays the effects of previous gradient over-time. The decay parameter $\beta_{3}$ in the algorithm enables us to establish convergence result similar to AMSGrad ($\beta_3= 0$) and maintain the efficiency of Adam at the same time.
\\
\textbf{(3)}
The (local) averaging step in the algorithm is performed $t$ times at each iteration which is obtained by $\W^t$. The logarithmic magnitude of $t$ is only needed to provide theoretical convergence guarantees, while in practice a single step of averaging is adequate. 

\section{Convergence Analysis}\label{sec:CA}
This section provides non-asymptotic convergence rates for the \DADAM algorithm. The following assumptions are required to establish the results. 

\begin{assum}\label{assumption:function}
For all $\x, \y \in \mathbb{R}^d$, 
\begin{enumerate}
    \item  \label{assumption:g_unbi} $\E_{\bxi \sim \mathcal{D}}[\nabla f(\x,\bxi)] = \nabla F(\x)$.
    \item  \label{assumption:g_bounded} The function $f(\x,\bxi)$ has a $G_{\infty}$-bounded gradient, i.e., $\forall \ \bxi \sim \mathcal{D}$, it holds that $\|\nabla f(\x,\bxi)\|_{\infty} \leq G_{\infty} <\infty$. 
    \item  \label{assu:bounded_var}  The function $F(\cdot)$ has bounded variance, i.e., 
$$\mathbb{E}_{\bxi \sim \mathcal{D}}\left[\|\nabla f(\x,\bxi) -\nabla F(\x)\|^2\right] = \sigma^2<\infty.$$
    \item \label{assu:Lipshitz_gradient}The function $F(\cdot)$ is gradient Lipschitz continuous, i.e., $\exists \ L \; \textnormal{such that } 0\leq L < \infty$ and
    $$\|\nabla F(\x) - \nabla F(\y)\| \leq L\|\x-\y\|.$$
\end{enumerate}
\end{assum}

The above assumptions are fairly standard in the non-convex optimization literature \citep{bottou2018optimization}.
Further, Assumption~\ref{assumption:function}(\ref{assumption:g_bounded}) is slightly stronger than the assumption $\|\nabla f(\x,\bxi)\| \leq G_2$ used in the analysis of SGD . However, this assumption is crucial for the convergence analysis of adaptive methods in the non-convex setting and it has been widely considered in the literature \citep{zaheer2018adaptive,zhou2018convergence,liu2019towards,nazari2019dadam,nazari2020adaptive}.

\begin{assum}[Minty VI condition]
\label{assumption:bounded-spaceI} There exists $\x_*\in \mathbb{R}^d$ such that for any $\x \in \mathbb{R}^d$, 
\[
\langle \x-\x_*,  \nabla F(\x) \rangle \geq 0.
\]
\end{assum}

This assumption is commonly used in non-convex minimization problems due to its practicality~\citep{li2017convergence,kleinberg2018alternative,zhou2019sgd}. Subsequently, it was widely adopted and commonly used for min-max optimization problems as well~\citep{mertikopoulos2018optimistic,liu2019towards,liu2019decentralized,dang2015convergence,iusem2017extragradient, lin2018solving, wai2018multi,waiprovably}.  This assumption also has been commonly used in the literature for studying min-max problems that result from training GANs; see~\cite{liu2019decentralized, liu2019towards, mertikopoulos2018optimistic} and references therein.

\begin{assum}\label{assumption:bounded-space} 
We assume for the points $\x^*$ in Assumption~\ref{assumption:bounded-spaceI} and $\z_{i,k}$ generated by Algorithm~\ref{alg:1} that the following hold: $\|\x_*\| \leq \frac{D}{2}$ and $\|\z_{i,k}\| \leq \frac{D}{2}, \; \forall \ i,k$.
\end{assum}

Finally, for the topology of the mixing matrix $\W$, we require the following assumption.  
\begin{assum}[Mixing Matrix]\label{assumption:graph} 
The graph $\mathcal{G} = (V, E)$ is fixed and undirected; the associated matrix $\W$ of $\mathcal{G}$ is doubly stochastic; and 
\begin{equation}
\rho:= \max\{|\sigma_2(\W)|, |\sigma_n(\W)| \} <1.       
\end{equation}
\end{assum}
Assumption D is common in the literature of decentralized algorithms; see, e.g., ~\citep{yuan2016convergence,nazari2019dadam}.

\begin{theorem}[Informal statement]\label{thm:revised_main}
Suppose Assumptions~\ref{assumption:function}--\ref{assumption:graph} hold. Let
\begin{align*}
& \beta_{1,t}=\beta_{1,1}\lambda^{t-1}, ~~~  \lambda\in(0,1), ~~~ \beta_{1,1} \leq \frac{1}{1+\sqrt{108 G_{\infty}/{MG_0^3}}}, ~~~ \{\beta_h\}_{h=1}^3 \in [0,1),~~ \\
& \eta \leq \sqrt{\frac{G_{0}^3}{72 L^2 G_{\infty}}}, ~~~\textnormal{and}~~~ G_{0}^2 \leq \|\sVtori_{i,0}\|_{\infty} \leq G_{\infty}^2. 
\end{align*}
Then, the iterates of \DADAM satisfy
\begin{align*}
&\frac{1}{N}\sum_{k = 1}^N \E\|\nabla F (\bar\z_k)\|^2  \leq  \frac{B_1}{N} + \frac{B_2 }{MN} \sum_{k = 1}^N \frac{\sigma^2}{m_k} + \frac{B_3 M\rho^t}{1-\rho^t},
\end{align*}
where $B_1, B_2$ and $B_3$ are constants obtained by~\eqref{eq:revised_const_global}.
\end{theorem}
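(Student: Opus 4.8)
The plan is to reduce the decentralized, adaptively preconditioned dynamics to a single \emph{averaged} recursion governed by the Minty VI condition, thereby isolating the three error sources—optimization, stochastic variance, and network disagreement—that correspond to the three terms $B_1/N$, $B_2\sigma^2/(MNm_k)$, and $B_3M\rho^t/(1-\rho^t)$.

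First I would introduce the averaged iterates $\bar\z_k=\frac{1}{M}\sum_i\z_{i,k}$, $\bar\x_k$, and $\bar\di_k$, and exploit double stochasticity of $\W$ (so that $\mathbf{1}^\top\W^t=\mathbf{1}^\top$) to collapse the averaged update to the extra-gradient recursion $\bar\z_k=\bar\x_{k-1}-\eta\bar\di_{k-1}$ and $\bar\x_k=\bar\x_{k-1}-\eta\bar\di_k$, which is free of the mixing matrix. In parallel I would bound the consensus error $\sum_i\|\z_{i,k}-\bar\z_k\|^2$: projecting onto the complement of the all-ones direction, $\W^t$ contracts disagreement by $\rho^{2t}$, and combining this with the bounded-gradient Assumption~\ref{assumption:function}(\ref{assumption:g_bounded}) and the bounded preconditioner yields a geometric-series estimate of order $M\rho^t/(1-\rho^t)$—exactly the third term.

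Next I would control the adaptive preconditioner. Using $\|\nabla f\|_\infty\le G_\infty$ and the initialization $G_0^2\le\|\sVtori_{i,0}\|_\infty\le G_\infty^2$, an induction on the moving-average recursions for the second-moment estimates shows the entries of $\sVtori_{i,k}$ remain in $[G_0^2,G_\infty^2]$, so the effective step-size $\eta\,\boldsymbol{\Lambda}_k^{-1/2}$ is sandwiched between fixed constants and the preconditioned metric is uniformly equivalent to the Euclidean one. With this in hand I would track the $\boldsymbol{\Lambda}_k$-weighted potential $\|\bar\x_k-\x_*\|^2$. Expanding one step and substituting the averaged recursion surfaces the inner product $\langle\bar\z_k-\x_*,\nabla F(\bar\z_k)\rangle$, which the Minty VI Assumption~\ref{assumption:bounded-spaceI} renders nonnegative; this is the mechanism that replaces convexity and drives the averaged iterates toward $\x_*$. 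The Lipschitz Assumption~\ref{assumption:function}(\ref{assu:Lipshitz_gradient}) lets me pass between gradients at $\bar\z_k$ and at $\bar\x_k$ (the extra-gradient correction), and the step-size ceiling $\eta\le\sqrt{G_0^3/(72L^2G_\infty)}$ is precisely what keeps the resulting second-order term dominated so that a positive multiple of $\|\nabla F(\bar\z_k)\|^2$ survives on the favorable side.

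Finally I would fold in momentum and noise. The decaying schedule $\beta_{1,t}=\beta_{1,1}\lambda^{t-1}$ makes the accumulated momentum bias summable while the bound on $\beta_{1,1}$ controls the cross term it generates; taking conditional expectations and invoking unbiasedness (Assumption~\ref{assumption:function}(\ref{assumption:g_unbi})) and bounded variance (Assumption~\ref{assumption:function}(\ref{assu:bounded_var})) converts the stochastic-gradient noise into the $\sigma^2/m_k$ contributions. Telescoping the per-step inequality from $k=1$ to $N$, bounding the initial potential $\|\bar\x_0-\x_*\|^2$ via the bounded-domain Assumption~\ref{assumption:bounded-space}, and dividing by $N$ delivers the stated bound with $B_1,B_2,B_3$ absorbing the constants. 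The main obstacle I anticipate is the heterogeneity of the per-node preconditioners $\sVtori_{i,k}$: since each node carries its own adaptive matrix, the averaged update does not diagonalize cleanly, and reconciling the $\boldsymbol{\Lambda}_k$-weighted potential across nodes with the consensus bound—so that the cross terms coupling disagreement and preconditioning stay of order $\rho^t$ rather than leaking into the $1/N$ rate—is the delicate technical core.
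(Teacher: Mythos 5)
Your proposal follows essentially the same route as the paper's proof: an extragradient-style potential argument in the adaptively weighted metric, with the Minty VI condition supplying the sign of the key inner product, the sandwich $G_0^2\leq\|\sVtori_{i,k}\|_\infty\leq G_\infty^2$ making the preconditioned norm equivalent to the Euclidean one, the $\rho^{2t}$ contraction of $\W^t$ off the consensus subspace yielding the $M\rho^t/(1-\rho^t)$ term, and the stated conditions on $\eta$ and $\beta_{1,1}$ used exactly where you place them. The only cosmetic difference is that the paper tracks the stacked potential $\|\boldsymbol{\Lambda}_{k-1}^{1/4}\circ(\X_k-\X_*)\|_F^2$ and invokes the Minty inequality at each local iterate $\z_{i,k}$ rather than at $\bar\z_k$, which is one way of handling the node-heterogeneous preconditioners you correctly identify as the delicate point.
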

Theorem~\ref{thm:revised_main} summarizes the main result of this paper and provides an upper bound for the average square norm of the gradient. Its significance for the problem under consideration is manifested in the following corollary.

\begin{corollary}\label{revised_cor:1}
Under the Assumptions of Theorem~\ref{thm:revised_main}, if $t \geq \log_{\frac{1}{\rho}} \left(1 + C\right), N \sim \tilde  {\mathcal{O}}(\epsilon^{-2})$, then for all settings in the following table, there exists an iterate of Algorithm~\ref{alg:1} that is an $\epsilon$-SFNE point of the game defined in~\eqref{eq:main_game}.

\begin{table}[H]
  \centering

  \begin{tabular}{|P{2.5cm}|P{2.5cm}|P{2.5cm}|}
    \hline
    $m_i    $ & $M$ & $C$ \\ \hline
    $ \mathcal{O}(\epsilon^{-2})$             & $ \mathcal{O}(1)$    & $ \mathcal{O}(\epsilon^{-2})$   \\ \hline
    $ = i + 1$ & $  \mathcal{O}(1)$  &  $\mathcal{O}(\epsilon^{-2})$   \\ \hline
   $\mathcal{O}(1)$  & $ \mathcal{O}(\epsilon^{-2})$ & $ \mathcal{O}(\epsilon^{-4})$  \\ \hline
  \end{tabular}
\caption{Required complexity to obtain $\epsilon$-SFNE in different settings.  }\label{tab1}
\end{table}
\end{corollary}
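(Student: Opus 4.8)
The plan is to treat Corollary~\ref{revised_cor:1} as a direct specialization of the bound in Theorem~\ref{thm:revised_main}: I would substitute the prescribed choices of $N$, $t$, $m_k$ and $M$ into the three-term upper bound and verify that each term is of order $\epsilon^2$. Theorem~\ref{thm:revised_main} gives
\[
\frac{1}{N}\sum_{k=1}^N \E\|\nabla F(\bar\z_k)\|^2 \leq \frac{B_1}{N} + \frac{B_2}{MN}\sum_{k=1}^N \frac{\sigma^2}{m_k} + \frac{B_3 M\rho^t}{1-\rho^t},
\]
and the goal is to show the right-hand side is at most $\epsilon^2$ for every row of Table~\ref{tab1}. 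Once this is established, the average over $k$ of $\E\|\nabla F(\bar\z_k)\|^2$ is at most $\epsilon^2$, so by a pigeonhole argument there exists an index $k^\ast$ with $\E\|\nabla F(\bar\z_{k^\ast})\|^2 \leq \epsilon^2$; by Definition~\ref{deff:EFN} this iterate $\bar\z_{k^\ast}$ is an $\epsilon$-SFNE point, which is exactly the claim.

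The first and third terms can be handled uniformly. The term $B_1/N$ is $\mathcal{O}(\epsilon^2)$ directly from $N \sim \tilde{\mathcal{O}}(\epsilon^{-2})$. For the third (consensus) term, the hypothesis $t \geq \log_{1/\rho}(1+C)$ is exactly the statement $\rho^t \leq 1/(1+C)$, whence $1-\rho^t \geq C/(1+C)$ and
\[
\frac{B_3 M\rho^t}{1-\rho^t} \leq \frac{B_3 M}{C}.
\]
Reading $M$ and $C$ off each row of Table~\ref{tab1}: rows one and two have $M=\mathcal{O}(1)$ and $C=\mathcal{O}(\epsilon^{-2})$, giving $B_3 M/C = \mathcal{O}(\epsilon^2)$, while row three has $M=\mathcal{O}(\epsilon^{-2})$ and $C=\mathcal{O}(\epsilon^{-4})$, again giving $B_3 M/C = \mathcal{O}(\epsilon^2)$. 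Thus the consensus term is $\mathcal{O}(\epsilon^2)$ in all three settings, and the logarithmic choice of the number of mixing steps $t$ is what absorbs the dependence on the network spectral gap through $\rho$.

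The substantive work is the variance term, whose treatment differs across the three rows and is the point of the table. With the large constant batch $m_k=\mathcal{O}(\epsilon^{-2})$ of row one, $\sum_{k=1}^N \sigma^2/m_k \leq N\cdot\mathcal{O}(\epsilon^2)$, so the term collapses to $(B_2/M)\,\mathcal{O}(\epsilon^2)=\mathcal{O}(\epsilon^2)$. For the growing batch $m_k=k+1$ of row two, the sum is harmonic, $\sum_{k=1}^N \sigma^2/(k+1)=\mathcal{O}(\sigma^2\log N)$, so the term is $\mathcal{O}(\log N/N)=\tilde{\mathcal{O}}(\epsilon^2)$ under $N\sim\tilde{\mathcal{O}}(\epsilon^{-2})$; this is precisely where the $\tilde{\mathcal{O}}$ in the statement originates. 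For the constant batch $m_k=\mathcal{O}(1)$ of row three, $\sum_{k=1}^N \sigma^2/m_k=\mathcal{O}(N)$ and the term reduces to $(B_2/M)\,\mathcal{O}(1)$, which is $\mathcal{O}(\epsilon^2)$ precisely because $M=\mathcal{O}(\epsilon^{-2})$ averages the variance over sufficiently many nodes.

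I expect the main obstacle to be careful bookkeeping rather than any single hard inequality: one must confirm that the same triple $(m_k,M,C)$ in each row simultaneously drives all three terms below $\epsilon^2$, while tracking the constants $B_1,B_2,B_3$ of~\eqref{eq:revised_const_global} and the hidden logarithmic factors so that the advertised iteration count is honestly $\tilde{\mathcal{O}}(\epsilon^{-2})$. A secondary point worth stating explicitly is that the bound controls the consensus average $\bar\z_k$, so the final existence step yields an $\epsilon$-SFNE in the sense of Definition~\ref{deff:EFN}, with the expectation taken over the stochastic-gradient randomness of \DADAM.
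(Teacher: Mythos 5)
Your overall strategy is exactly the one the paper intends: the corollary carries no separate proof in the text, and the only sensible derivation is the direct substitution you carry out --- bound the consensus term by $B_3M/C$ via $\rho^t\le 1/(1+C)$ and hence $\frac{\rho^t}{1-\rho^t}\le\frac{1}{C}$, check $M/C=\mathcal{O}(\epsilon^2)$ row by row, split the variance term into the three batch regimes (large constant batch, harmonic sum for $m_k=k+1$ contributing the $\tilde{\mathcal{O}}$, and node-averaging for constant batches with $M=\mathcal{O}(\epsilon^{-2})$), and finish with the averaging/pigeonhole step against Definition~\ref{deff:EFN}. Those computations are all correct.

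The one genuine gap --- which you flag as ``bookkeeping'' but never discharge --- is the claim that $B_1/N=\mathcal{O}(\epsilon^2)$ follows ``directly'' from $N\sim\tilde{\mathcal{O}}(\epsilon^{-2})$. The quantities $B_i=C_i/C_0$ of \eqref{eq:revised_const_global} are not absolute constants: inspecting \eqref{eq:revised_2}, $C_1$ contains the summand $\frac{12\eta DG_\infty}{G_0}\cdot\frac{\sqrt{M}\,G_\infty d}{G_0^2}$, so $C_1=\Omega(\sqrt{M})$, while $C_0$ remains bounded away from zero (and bounded above) under the stated constraint on $\beta_{1,1}$, so $B_1=\Theta(\sqrt{M})$. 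In the first two rows $M=\mathcal{O}(1)$ and nothing changes, but in the third row $M=\mathcal{O}(\epsilon^{-2})$ gives $B_1/N=\mathcal{O}(\epsilon^{-1}\cdot\epsilon^{2})=\mathcal{O}(\epsilon)$, which is not $\mathcal{O}(\epsilon^{2})$. A complete proof must either track the $M$-dependence of $C_1$ and enlarge $N$ accordingly in that row, or supply an argument that this $\sqrt{M}$ term is otherwise controlled; as written, your first step fails for the third setting. (This imprecision is arguably inherited from the paper's own statement of the constants, but it cannot be waved away by treating $B_1,B_2,B_3$ as independent of $\epsilon$ and $M$.)
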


\section{Performance Evaluation of \DADAM}\label{sec:NS}

This section presents a comprehensive empirical evaluation of \DADAM based on a representative suite of decentralized data sets and modeling tasks.
The objective is to both validate the theoretical results, and also evaluate the performance of the proposed method compared to benchmark methods for real data sets. To that end, we conduct simulations of decentralized learning algorithms for a synthetic data experiment, and also use the algorithms for training GANs.
In both evaluation tasks, we assume that $M$ nodes communicate through a ring topology; i.e., $\W \in \mathbb{R}^{M \times M}$ is defined as
\begin{align*}
[\W]_{i,j} = 
 \begin{cases}
   1/3 &\quad\text{if } |i-j| \leq 1 \; \text{or} \; |i-j| = M-1,\\
   0 &\quad\text{o.w}. \\
 \end{cases}
\end{align*}
This is a commonly used topology in the decentralized optimization literature~\citep{zhang2019distributed,patarasuk2009bandwidth,zhang2019highly,zhang2020improving}.
For the centralized variants of the methods, we consider $\W = \frac{1}{M} \bf 1 \bf 1^\intercal$.
As previously mentioned, the main focus of the paper is to study the performance of  Algorithm~\ref{alg:1}. However, in this section, we will also evaluate the performance of  Algorithm~\ref{alg:0} that works in a single machine set-up. A detailed theoretical analysis and numerical experiments for the latter algorithm, see~\cite{barazandeh2021solving}.
\subsection{A Synthetic Data Experiment}
To better understand the challenges of solving the type of min-max problems posited in~\eqref{eq:main_game_s}, consider the following function
\begin{equation}\label{eq:numerical}
  f(\theta,\alpha) =
    \begin{cases}
      c(\theta-\alpha) + (\theta^2-\alpha^2)+ k \theta \alpha,  &  \text{w.p\;} \frac{1}{3},  \\
      (\theta-\alpha) + (\theta^2-\alpha^2) + k \theta \alpha, & \text{w.p\;} \frac{2}{3},
    \end{cases}       
\end{equation}
with $c \geq 1$ and $k \geq 0$. It can easily be shown that
\begin{align*}
F(\theta,\alpha) = \frac{c+2}{3} (\theta-\alpha)+ (\theta^2-\alpha^2)+ k \theta \alpha,    
\end{align*}
which has the unique FNE point $(\theta^*,\alpha^*) =-(c+2)/(3k^2 + 12) (2-k,2+k)$. It can be seen that this is a strongly-convex-strongly-concave, since $\nabla^2_{\theta} F(\theta,\alpha) = 2\I \succ 0$ and $\nabla^2_{\alpha}F(\theta,\alpha) = -2\I \prec 0$.  
In this experiment, we consider $M = 5$ nodes that are connected to each other using the ring topology previously defined. We use a decentralized version of ADAM called Decentralized Parallel Optimistic Adam (DP-OAdam)~\cite{liu2019decentralized, daskalakis2017training} to find the FNE of the above problem and compare its performance with our proposed Algorithm~\ref{alg:1}. We use the following two metrics to evaluate the performance of the algorithms; $e_k = \frac{\|\bar\z_k - \z_*\|}{\|\z_*\|}$ and $\mathcal{R}_k = \frac{1}{k} \sum_{i = 1}^k \| \nabla F(\bar\z_k)\|^2$ that are the error rate and average squared norm of the gradients at iteration $k$, respectively. We set the parameters at $c = 1010, k = 0.01, N = 10^{7}, \eta = 10^{-2}, \beta_1 = 0, \beta_2 = 1/(1+c^2)$ and $\beta_3 = 0.1$. All other parameters are initialized at zero. Figure~\ref{fig:first_simulation} shows the result of the simulated experiment. It can be seen that \DADAM finds the unique FNE point while DP-OAdam diverges from the desired point and is unable to find the FNE of this simple strongly-convex-strongly-concave game. This phenomenon is common in approaches that simultaneously update the minimization and maximization parameters (more details can be found in~\citep{razaviyayn2020nonconvex} and references therein).    
\begin{figure}[t]
\begin{subfigure}{.5\textwidth}
  \centering
  \includegraphics[width=.8\linewidth]{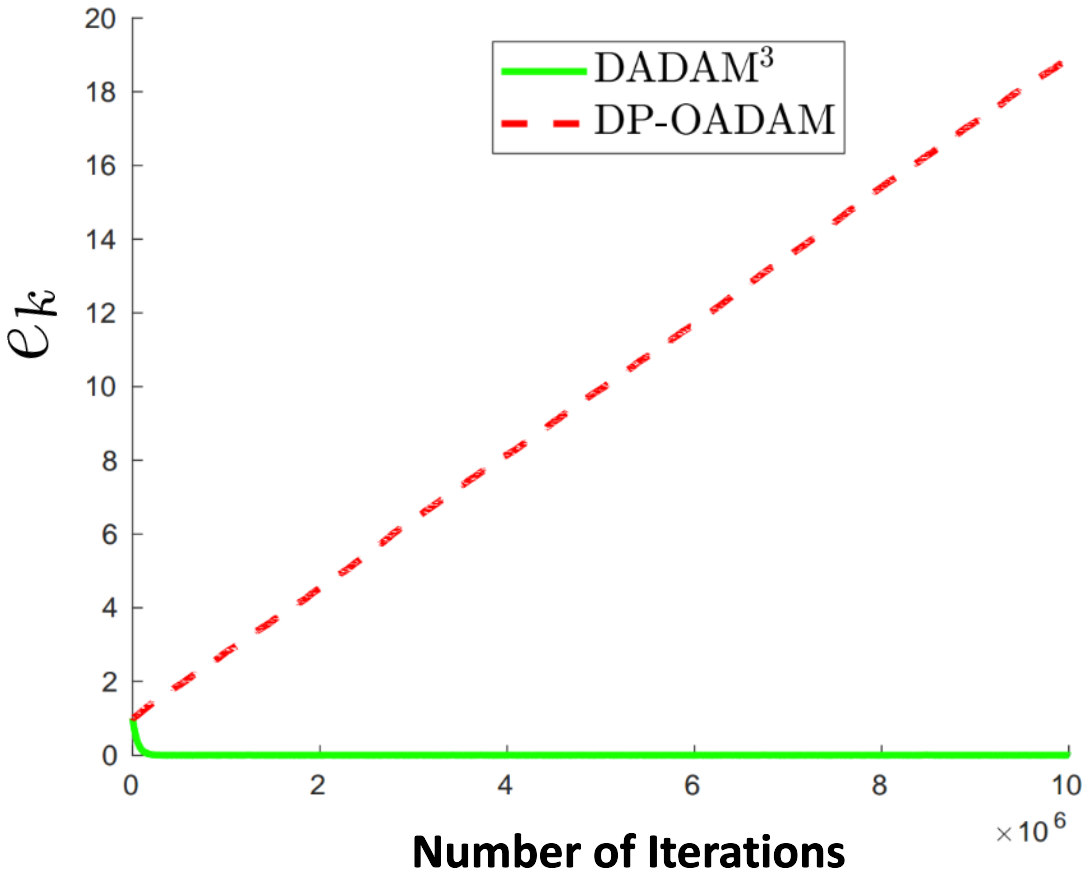}  
  \caption{Error rate}
  \label{fig:sub-first}
\end{subfigure}
\begin{subfigure}{.5\textwidth}
  \centering
  \includegraphics[width=.8\linewidth]{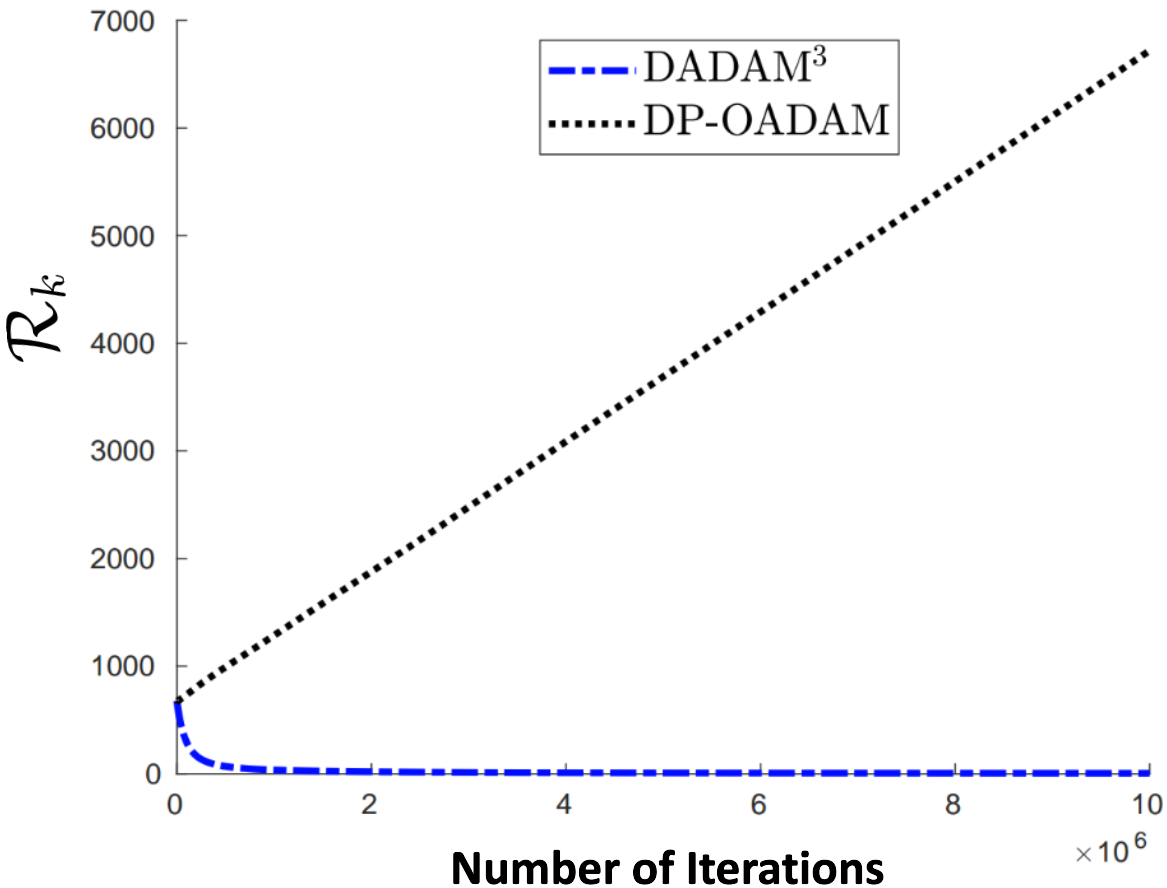}  
  \caption{Average norm-squared of gradient}
  \label{fig:sub-second}
\end{subfigure}
\caption{Simulation result for synthetic data experiment; Simple generalization of adaptive methods for decentralized problem might diverge in practice.}
\label{fig:first_simulation}
\end{figure}


%
\subsection{Training GANs using benchmark data sets}
In this section, we compare the empirical performance of the proposed algorithms with benchmark methods for training a Wasserstein GAN Gradient Penalty network~(WGAN-GP)~\cite{gulrajani2017improved} to generate samples from the MNIST, CIFAR-10, CIFAR-100, Large-scale CelebFaces Attributes (CelebA) and ImageNet data sets. A brief description of each data set is provided next.

\textbf{MNIST\footnote{http://yann.lecun.com/exdb/mnist/} :} This data set contains about 70,000 samples of handwritten digits. We have used this data set to evaluate the performance of the proposed Algorithm~\ref{alg:1} using $M = 5$ nodes. Note that the MNIST images are of size $28 \times 28$ and we have scaled them to size $32 \times 32$ in order to use the same architecture as for the two CIFAR data sets.  

\textbf{CIFAR-10\footnote{https://www.cs.toronto.edu/~kriz/cifar.html}:} This data set consists of 60000 color images assigned to 10 different classes. We have used this data set to evaluate the performance of the proposed Algorithm~\ref{alg:0} and Algorithm~\ref{alg:1} using $M = 5$ nodes. 

\textbf{CIFAR-100\footnote{https://www.cs.toronto.edu/~kriz/cifar.html}:} This data set is similar to CIFAR-10. However, it consists of 100 classes containing 600 images each (total of 60000 color images). We have used this data set to evaluate the performance of the proposed Algorithm~\ref{alg:1} using $M = 15$ nodes.

\textbf{Imagenette\footnote{https://github.com/fastai/imagenette}:} The original ImageNet\footnote{http://www.image-net.org/} data set contains more than 4 million images in more than 20,000 categories. In almost all of the benchmark implementations~\cite{odena2017conditional, zhang2019self}, a subset of ImageNet is used for the purpose of training GANs. We will use the commonly used Imagenette that is a subset of 10 classes from Imagenet. We use this data set to train GANs using $M = 15$ nodes. The images are also scaled to size $64 \times 64$.

\textbf{CelebA\footnote{http://mmlab.ie.cuhk.edu.hk/projects/CelebA.html}:} This is a large scale data set that contains more than $200,000$ celebrity images. After reshaping images to $64 \times 64$,  we use this data set to evaluate the performance of the proposed Algorithm~\ref{alg:0}.   

Images in all of the above data sets are normalized to have mean intensity and standard deviation equal to 0.5 before feeding them into the model.



\subsection{Experimental Set-up}
\paragraph{Generator and Discriminator model for MNIST, CIFAR-10 and CIFAR-100} The generator's network consists of an input layer, 2 hidden layers and an output layer. Each of the input and hidden layers consist of a transposed convolution layer followed by batch normalization and ReLU activation. The output layer is a transposed convolution layer with hyperbolic tangent activation. 
The network for the discriminator also has an input layer, 2 hidden layers and an output layer. All of the input and hidden layers are convolutional ones followed by instance normalization and Leaky ReLU activation with slope~$0.2$. The design for both networks is summarized in Tables~\ref{tab:generator_arch} and~\ref{tab:discriminator_arch}.

\paragraph{Generator and Discriminator model for CelebA and Imagenette} The generator's network consists of an input layer, 3 hidden layers and an output layer. Each of the input and hidden layers consist of a transposed convolution layer followed by batch normalization and ReLU activation. The output layer is a transposed convolution layer with hyperbolic tangent activation. 
The network for the discriminator also has an input layer, 3 hidden layers and an output layer. All of the input and hidden layers are convolutional ones followed by instance normalization and Leaky ReLU activation with slope~$0.2$. The design for both networks is summarized in Tables~\ref{tab:generator_arch2} and~\ref{tab:discriminator_arch2}.


\begin{table}[h]
\centering
\resizebox{\textwidth}{!}{%
\begin{tabular}{@{}cc@{}}
\toprule
\multirow{2}{*}{Layer Type} & Shape \\ 
 & (Ch\_in, Ch\_out, kernel, stride, padding) \\ \midrule
ConvTranspose $+$ BatchNorm $+$ ReLU & $(100, 1024, 4, 1, 0)$ \\
ConvTranspose $+$ BatchNorm $+$ ReLU & $(1024, 512, 4, 2, 1)$ \\
ConvTranspose $+$ BatchNorm $+$ ReLU & $(512, 256, 4, 2, 1)$ \\
ConvTranspose $+$ Tanh & $(256, C, 4, 2, 1)$ \\ \bottomrule
\end{tabular}
}

\caption{Architecture of the Generator. $C = 1$ for MNIST and $C = 3$ for CIFAR-10 and CIFAR-100.}
\label{tab:generator_arch}
\end{table}

\begin{table}[h]
\centering
\resizebox{\textwidth}{!}{%

\begin{tabular}{@{}cc@{}}
\toprule
\multirow{2}{*}{Layer Type} & Shape \\ 
 & (Ch\_in, Ch\_out, kernel, stride, padding) \\ \midrule
Conv $+$ InstanceNorm $+$ LeakyReLU & $(C, 256, 4, 2, 1)$ \\
Conv $+$ InstanceNorm $+$ LeakyReLU & $(256, 512, 4, 2, 1)$ \\
Conv $+$ InstanceNorm $+$ LeakyReLU & $(512, 1024, 4, 2, 1)$ \\
Conv & $(1024, 1, 4, 1, 0)$ \\ \bottomrule
\end{tabular}
}

\caption{Architecture of the Discriminator. $C = 1$ for MNIST and $C = 3$ for CIFAR-10 and CIFAR-100.}
\label{tab:discriminator_arch}
\end{table}

\begin{table}[h]
\centering
\resizebox{\textwidth}{!}{%
\begin{tabular}{@{}cc@{}}
\toprule
\multirow{2}{*}{Layer Type} & Shape \\ 
 & (Ch\_in, Ch\_out, kernel, stride, padding) \\ \midrule
ConvTranspose $+$ BatchNorm $+$ ReLU & $(100, 512, 4, 1, 0)$ \\
ConvTranspose $+$ BatchNorm $+$ ReLU & $(512, 256, 4, 2, 1)$ \\
ConvTranspose $+$ BatchNorm $+$ ReLU & $(256, 128, 4, 2, 1)$ \\
ConvTranspose $+$ BatchNorm $+$ ReLU & $(128, 64, 4, 2, 1)$ \\
ConvTranspose $+$ Tanh & $(64, C, 4, 2, 1)$ \\ \bottomrule
\end{tabular}
}
\caption{Architecture of the Generator. $C = 3$ for Celeba and Imagenet.}
\label{tab:generator_arch2}
\end{table}

\begin{table}[h]
\centering
\resizebox{\textwidth}{!}{%

\begin{tabular}{@{}cc@{}}
\toprule
\multirow{2}{*}{Layer Type} & Shape \\ 
 & (Ch\_in, Ch\_out, kernel, stride, padding) \\ \midrule
Conv $+$ InstanceNorm $+$ LeakyReLU & $(C, 64, 4, 2, 1)$ \\
Conv $+$ InstanceNorm $+$ LeakyReLU & $(64, 128, 4, 2, 1)$ \\
Conv $+$ InstanceNorm $+$ LeakyReLU & $(128, 256, 4, 2, 1)$ \\
Conv $+$ InstanceNorm $+$ LeakyReLU & $(256, 512, 4, 2, 1)$ \\
Conv & $(512, 1, 4, 1, 0)$ \\ \bottomrule
\end{tabular}
}

\caption{Architecture of the Discriminator. $C = 3$ for Celeba and Imagenet.}
\label{tab:discriminator_arch2}
\end{table}



\paragraph{Implementation} We implement all algorithms in PyTorch and an open-source implementation of the algorithms and the code for the experiments will be made available. The training parameters used in all experiments are summarized in Table~\ref{tab:para_training}.

\begin{table}[h]
\centering
\begin{tabular}{@{}ll@{}}
\toprule
Parameter     &  Value                 \\ \midrule
Learning Rate & $5*10^{-5}$   \\ 
$\beta_1$ & $0.5$\\
$\beta_2$ & $0.999$\\
$\beta_3$& $0.5$\\
Batch-size    & 64   \\
Training Iterations & $30000$\\
\bottomrule
\end{tabular}
\caption{Training Parameters.}
\label{tab:para_training}
\end{table}

\subsection{Results}
In this section, we evaluate the performance of our proposed algorithms in the task of training GANs. For the case of a single computing node, we compare Algorithm~\ref{alg:0} with the recently proposed OAdagrad~\cite{liu2019towards} which is an adaptive method. 

For the multiple computing nodes case, we use the proposed decentralized algorithm, \DADAM, and compare it with its centralized variant~\CADAM. Additionally, we compare the proposed algorithm with the Decentralized Optimistic Stochastic Gradient~(\DSGD) and its centralized variant called~\CSGD.

In this evaluation, we run the above listed algorithms for training a WGAN-GP and output the generated images and report their inception scores. The reported results are based on the average performance across all nodes. 
\noindent

\textit{Comparisons based on the inception score:} This score is one of the most commonly used measures for evaluating the quality of images generated by GANs~\cite{salimans2016improved}. A high value corresponds to images of higher quality and also more realistic to the human eye~\cite{barratt2018note} .

\begin{figure}[t]
\begin{subfigure}{.5\textwidth}
  \centering
  \includegraphics[width=.85\linewidth]{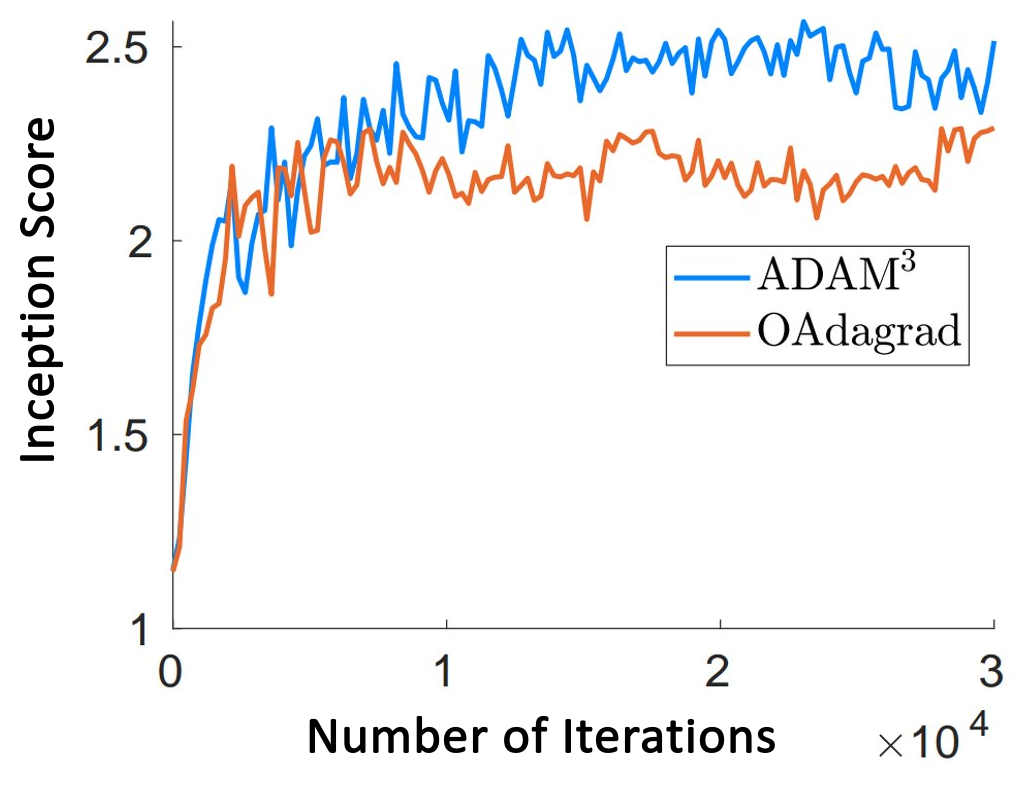}  
  \caption{Inception Score-CelebA}
  \label{fig:sub-first}
\end{subfigure}
\begin{subfigure}{.5\textwidth}
  \centering
  \includegraphics[width=.85\linewidth]{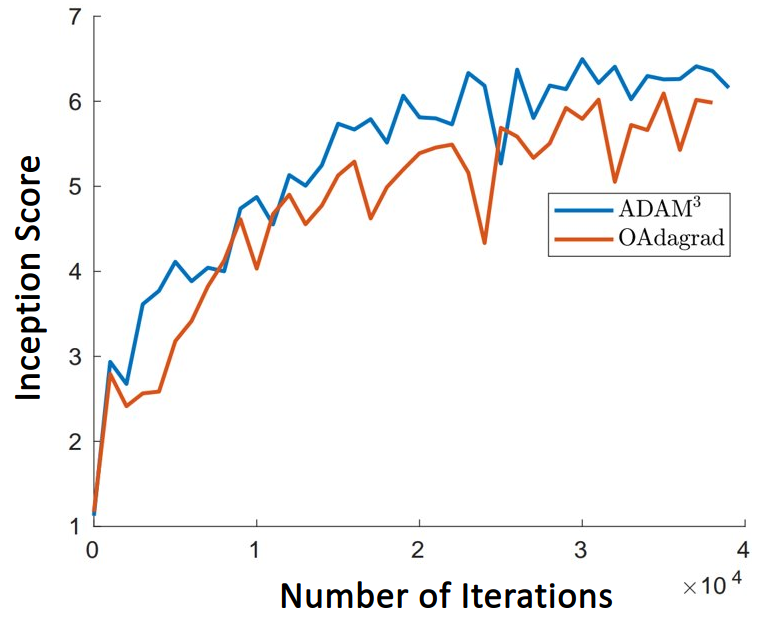}  
  \caption{Inception Score-CIFAR-10~\cite{barazandeh2021solving}}
  \label{fig:sub-second}
\end{subfigure}
\caption{Inception Score for CelebA(left) and CIFAR-10 (right) in Single Computing Node Experiment; }
\label{fig:result_single_machine}
\end{figure}

\begin{figure}[t]
\begin{subfigure}{.5\textwidth}
  \centering
  \includegraphics[width=.85\linewidth]{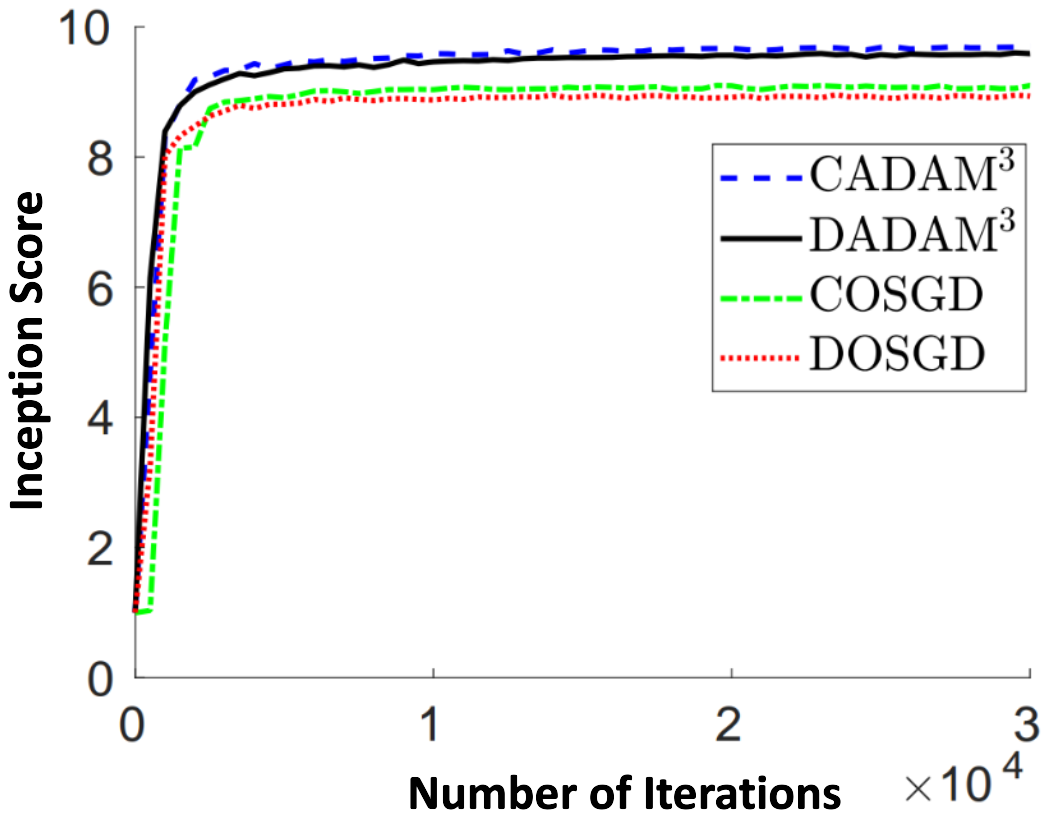}  
  \caption{Inception Score-MNIST}
  \label{fig:sub-first}
\end{subfigure}
\begin{subfigure}{.5\textwidth}
  \centering
  \includegraphics[width=.85\linewidth]{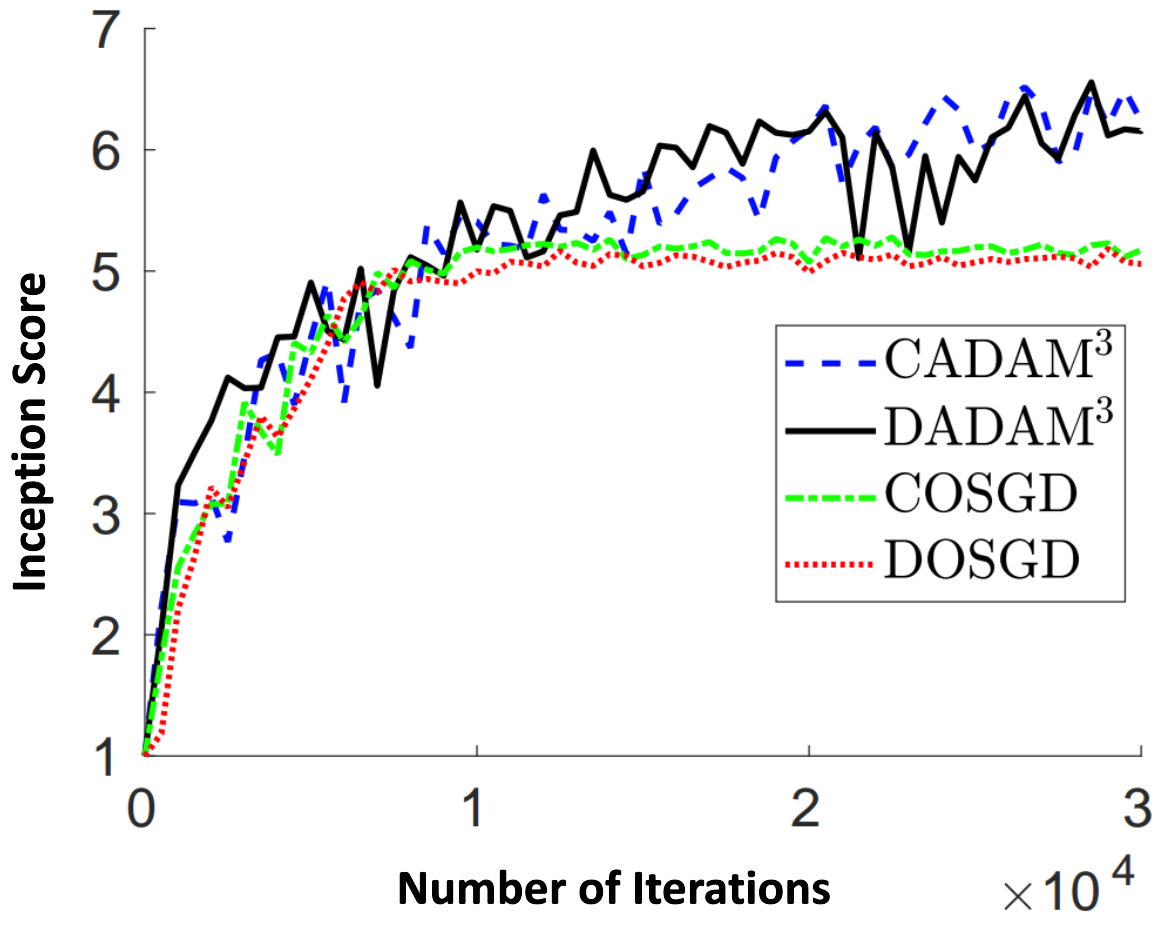}  
  \caption{Inception Score-CIFAR10}
  \label{fig:sub-second}
\end{subfigure}
\caption{ Inception Score for MNIST~(left) and CIFAR-10~(right)  using $M=5$ nodes; }
\label{fig:result}
\end{figure}

\begin{figure}[t]
\begin{subfigure}{.5\textwidth}
  \centering
  \includegraphics[width=.85\linewidth]{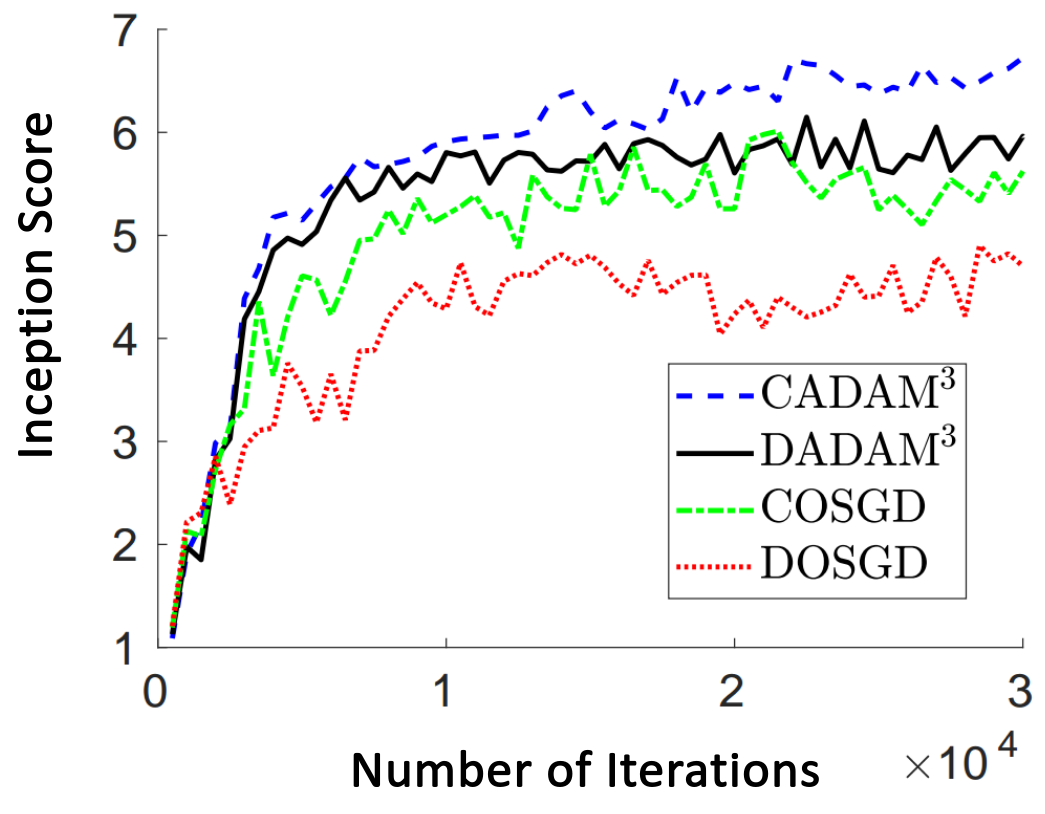}  
  \caption{Inception Score-Imagenette}
  \label{fig:sub-first1}
\end{subfigure}
\begin{subfigure}{.5\textwidth}
  \centering
  \includegraphics[width=.85\linewidth]{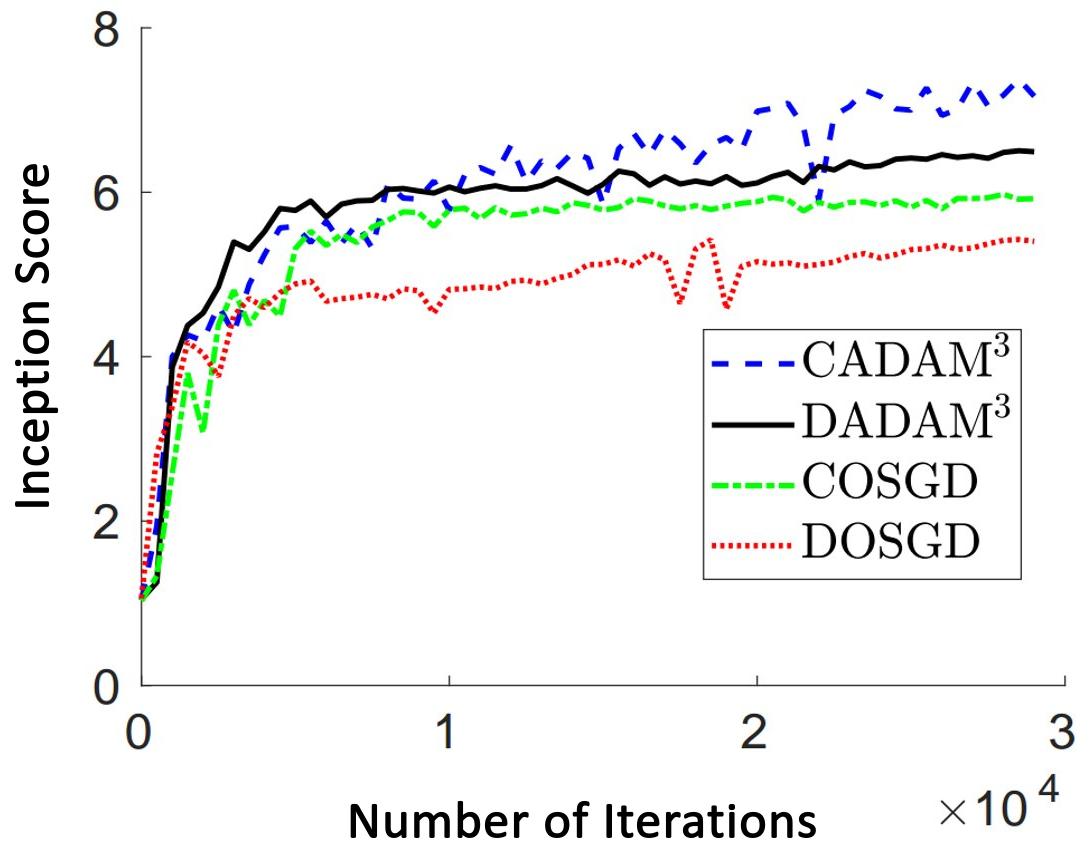}  
  \caption{Inception Score-CIFAR-100}
  \label{fig:sub-second1}
\end{subfigure}
\caption{Inception Score for Imagenette~(left) and CIFAR-100~(right) using $M=15$ nodes;}
\label{fig:result_cifar_100_imagnet}
\end{figure}

To calculate the inception score, we need to have a high-accuracy pre-trained classifier that will output the conditional probability of a given input image, i.e., the probability that an image belongs to any of the prespecified classes. Specifically, the logarithm of the inception score is calculated by
$$
\log~\text{Inception Score} = \E_{\x~\sim p_g} \KL(p(y|\x)||p(\x) ),
$$
where $p(y|\x)$ denotes the conditional probability,  $p(\x) = \int p(y|\x) p_g(\x)$  the marginal probability,  $\KL(\cdot,\cdot)$ the Kullback-Leibler divergence between two distributions, and $p_g$ is the distribution of the images generated by $g$.

Figure~\ref{fig:result_single_machine}, panels (a)-(b) show the inception scores for CelebA and CIFAR-10 images generated by the GANs trained by Algorithm~\ref{alg:0} and OAdagrad. Figure~\ref{fig:result}, panels (a)-(b) and Figure~\ref{fig:result_cifar_100_imagnet}, panels (a)-(b) show plots of the inception scores for MNSIT and CIFAR-10 using $M = 5$ nodes and CIFAR-100 and Imagenette using $M = 15$ nodes, respectively, using the proposed Algorithm~\ref{alg:1}. We also provide samples of generated images for all of the competing methods in the Supplement. It can be seen that Algorithm~\ref{alg:0} outperforms OAdagrad. Further, \DADAM and \CADAM are able to generate high quality images and reach a higher inception score more quickly compared to~\textsc{Dsgd} and~\textsc{Csgd}. This clearly illustrates that adaptive momentum methods exhibit superior performance compared to stochastic gradient methods. On the other hand, comparing decentralized methods with their centralized variants shows that the latter are capable of generating high quality images, but with higher communication cost and computational complexity.

\section{Conclusion}
This paper develops a new decentralized adaptive momentum method for solving min-max optimization problems. The proposed \DADAM algorithm enables data parallelization, as well as decentralized computation. We also derive the non-asymptotic rate of convergence for the proposed algorithm for a class of non-convex-non-concave games and evaluate its empirical performance by using it for training GANs. The experimental results on GANs illustrates the effectiveness of the proposed algorithm in practice.

\bibliography{mybibfile}

\newpage
\section*{Supplement to "Adaptive Momentum Methods for Min-Max Optimization''}

The proofs of all technical results, and
 key technical lemmas are presented next.
 We first introduce notation used throughout this section.

\textbf{Notation.} Throughout the Supplement, $\mathbb{R}^d$ denotes the $d$-dimensional real Euclidean space. $\II_d$ is used to denote the $ d \times d$ identity matrix.  For any pair of vectors $\x,\z\in \mathbb{R}^d,\,\, \langle \x , \z \rangle$ indicates the standard Euclidean inner product.  We denote the element in the $i^{th}$ row and $j^{th}$ column of matrix $\X$ by $[\X]_{ij}$.
We denote the $\ell_1$ norm by $\|\X\|_1 = \sum_{ij}|[\X]_{ij}|$, the maximum element of the matrix by $\|\X\|_{\max} = \max_{ij}|[\X]_{ij}|$, and the Frobenius norm by $\|\X\|= \sqrt{\sum_{ij}| [\X]_{ij}|^2}$, respectively. The above norms reduce to the vector norms if $\X$ is a vector. We also define the following notation in addition to~\eqref{eq:nota} that will be used in the proofs,
\begin{align}\label{eq:revised_nota2}
\nonumber
& \Vt_{k} = \left[\sVtori_{1,k}^{-\frac{1}{2}},\cdots, \sVtori_{M,k}^{-\frac{1}{2}}\right],\rg_k = \left[\srg_{1,k},\cdots, \srg_{M,s}\right],
\\
& \eg_k =\left[\seg_{1,k};,\cdots, \seg_{M,k}\right], \bg_{k}= \left[\sbg_k,\cdots, \sbg_k\right],\beps_k = \eg_k - \rg_k.   
\end{align}

In the above notation, $\srg_{i,k}$ is the gradient value calculated at point $\z_{i,k}$ , $\seg_{i,k}$ is its estimated value and $\sbg_k$ is the gradient value calculated at the averaged point $\bar\z_k.$  

Using the above notation, we can then rewrite the update rule of $\X_k$ and $\Z_K$ for all the nodes in Algorithm~\ref{alg:1} together as

\begin{equation*}
\begin{aligned}[c]
\Z_1 &= (\X_0 - \eta \di_0)\W^t, 
\\
\Z_2 &= (\X_1 - \eta \di_1)\W^t 
\\ &= \X_0\W^{2t} - \eta \di_1 \W^{2t} - \eta  \di_1 \W^t,\\
&\quad  \vdots \\
\Z_k &= \X_0\W^{tk}-\eta\sum_{s=1}^{k-1} \di_s\W^{t(k-s+1)} -\eta \di_{k-1}\W^t,
\end{aligned}
\qquad
\begin{aligned}[c]
\X_1 &= (\X_0-\eta \di_1)\W^t,
\\
\X_2 &= (\X_1-\eta \di_2)\W^t 
\\ &= \X_0 \W^{2t}-\eta \di_1\W^{2t} - \eta  \di_2 \W^t \\
&\quad  \vdots \\
\X_k &= \X_0\W^{tk} -\eta\sum_{s=1}^{k} \di_{s}\W^{t(k-s+1)}. 
\end{aligned}
\end{equation*}

Next, by assuming $\X_k = \Z_k = \di_k = \M_k = 0, \ \forall \ k \leq 0$, we have
\begin{align}\label{eq:update_zx}
\nonumber 
\Z_k &= -\eta\sum_{s=1}^{k} \di_{s-1}\W^{t(k-s+2)} -\eta \di_{k-1}\W^t,\\ 
\X_k &= -\eta\sum_{s=1}^{k} \di_s \W^{t(k-s+1)}.
\end{align}
We will also use the value of $\bar\z_k$ in a matrix form in the proofs given by
\begin{align*}
\bar\z_k &=  \frac{1}{M} \Z_k \mathbf{1}_M = \frac{1}{M} \big(-\eta\sum_{s=1}^{k}\di_{s-1}\W^{t(k-s+2)}-\eta \di_{k-1}\W^t\big)\mathbf{1}_M
\\
& = -\frac{\eta}{M}\sum_{s=1}^{k}\di_{s-1}\mathbf{1}_M -\frac{\eta}{M} \di_{k-1}\mathbf{1}_M,
\end{align*}
where the last equality is due to Assumption~\ref{assumption:graph} and the doubly stochastic nature of the mixing matrix $\W$.

It can easily be seen that $\z_{i,k}$ is the $i^{th}$ column of matrix $\Z_k$, i.e.,
\begin{align*}
\z_{i,k}  &=  \Z_k \mathbf{e}_i =  \left(-\eta\sum_{s=1}^{k}\di_{s-1}\W^{t(k-s+2)}-\eta \di_{k-1}\W^t\right)\mathbf{e}_i, 
\end{align*}
with $\e_i = [0,  \ldots , 1, \cdots , 0]^\top$ where 1 is appearing at the $i^{th}$ coordinate. Putting both of them together, we get
\begin{align}\label{eq:updatez}
\nonumber
\bar\z_k &=  - \frac{\eta}{M}\sum_{s=1}^{k}\di_{s-1}\mathbf{1}_M - \frac{\eta}{M} \di_{k-1}\mathbf{1}_M, \quad \textnormal{and}
\\
\z_{i,k} &= -\eta\sum_{s=1}^{k}\di_{s-1}\W^{(k-s+2)}\mathbf{e}_i-\eta \di_{k-1}\W^t\mathbf{e}_i.   
\end{align}
Additionally, the update rule for $\di_k$ in Algorithm~\ref{alg:1} can be written as

\begin{align}\label{eq:update_D}
\di_{k} & = \beta_{1,k} \Vt_{k} \circ \M_{k-1} + (1-\beta_{1,k})\Vt_{k} \circ \eg_{k}.
\end{align}

\section{Technical Lemmas}

Next, we explore some basics properties of the Hadamard product in Euclidean space.

\begin{lemma}\label{lem:hadamard} 
Let $\textbf{A}$ and $\textbf{B}$ be $H \times M$ matrices with entries in $\mathbb{R}$. Then, 

\begin{minipage}{0.45\textwidth}
\begin{enumerate}[label=(\roman*)]
\item \label{itm:one}$\|\sum\limits_{i = 1}^M \textbf{a}_{i}\|^2\leq M \sum\limits_{i = 1}^M \|\textbf{a}_i\|^2$;
 \item \label{itm:seven} $\|\textbf{A} \circ \textbf{B}\|_F \leq \|\textbf{A}\|_{\max} \|\textbf{B}\|_{1,1}$;
\item \label{itm:eight} $\|\textbf{A} \circ \textbf{B}\|_F \leq \|\textbf{A}\|_{\max} \|\textbf{B}\|_F$.
 \item  \label{itm:four} $ \|\textbf{A}\circ \textbf{B}\|_2 \leq \|\textbf{A}\|_2 \|\textbf{B}\|_2$;
\end{enumerate}
\end{minipage}%
\hfill
\begin{minipage}{0.65\textwidth}
\begin{enumerate}[label=(\roman*)]
 \setcounter{enumi}{3}
 \item \label{itm:five} $\|\textbf{A} \circ \textbf{B}\|_{F} \leq \|\textbf{A}\|_F \|\textbf{B}\|_F$;
\item \label{itm:two} $\left\Vert \frac{1}{M} (\textbf{A} \circ \textbf{B})\bm{1}_M \right\Vert^2 \leq \frac{1}{M} \|\textbf{A}\|^2_{\max}   \Vert \textbf{B}\Vert_F^2$;
\item \label{itm:three}$\left\Vert \frac{1}{M} (\textbf{A} \circ \textbf{B})\bm{1}_M \right\Vert  \leq  \frac{1}{M} \|\textbf{A}\|_{\max}\sum\limits_{i = 1}^M \Vert \textbf{b}_{i}\Vert_2$; 
\end{enumerate}
\end{minipage}%
\end{lemma}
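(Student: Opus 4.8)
The plan is to dispatch the seven inequalities in three groups, reserving the operator-norm bound~\ref{itm:four} as the only one that needs machinery beyond elementary, termwise manipulations.

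First I would handle the purely entrywise estimates \ref{itm:one}, \ref{itm:seven}, \ref{itm:eight} and \ref{itm:five}. Inequality~\ref{itm:one} is Cauchy--Schwarz (equivalently Jensen's inequality applied to $\|\cdot\|^2$): expanding $\|\sum_{i=1}^M \textbf{a}_i\|^2 = \sum_{i,j}\langle \textbf{a}_i,\textbf{a}_j\rangle$ and bounding each inner product by $\tfrac12(\|\textbf{a}_i\|^2+\|\textbf{a}_j\|^2)$ produces the factor $M$. For \ref{itm:eight} I would bound every entry of $\textbf{A}$ by $\|\textbf{A}\|_{\max}$ inside $\|\textbf{A}\circ\textbf{B}\|_F^2 = \sum_{ij}[\textbf{A}]_{ij}^2[\textbf{B}]_{ij}^2$, pull out $\|\textbf{A}\|_{\max}^2$, and recognize the remaining sum as $\|\textbf{B}\|_F^2$. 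Inequality~\ref{itm:seven} is the same computation followed by the elementary fact $\sum_{ij}[\textbf{B}]_{ij}^2 \leq (\sum_{ij}|[\textbf{B}]_{ij}|)^2 = \|\textbf{B}\|_{1,1}^2$. Finally \ref{itm:five} is immediate from \ref{itm:eight} together with $\|\textbf{A}\|_{\max}\leq\|\textbf{A}\|_F$ (or, directly, from $\sum_{ij}[\textbf{A}]_{ij}^2[\textbf{B}]_{ij}^2 \leq (\sum_{ij}[\textbf{A}]_{ij}^2)(\sum_{ij}[\textbf{B}]_{ij}^2)$, obtained by discarding the nonnegative cross terms).

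Next I would treat the two averaged bounds \ref{itm:two} and \ref{itm:three} by reducing the matrix--vector product to a column sum. The key observation is $(\textbf{A}\circ\textbf{B})\bm{1}_M = \sum_{i=1}^M \textbf{a}_i \circ \textbf{b}_i$, so that after extracting the $1/M$ prefactor both statements become estimates on a sum of Hadamard products of columns. For \ref{itm:two} I would apply \ref{itm:one} to this sum, whose resulting factor $M$ reduces the $1/M^2$ prefactor to $1/M$, and then bound each $\|\textbf{a}_i\circ\textbf{b}_i\|^2 \leq \|\textbf{A}\|_{\max}^2\|\textbf{b}_i\|^2$ exactly as in \ref{itm:eight}, summing to $\|\textbf{A}\|_{\max}^2\|\textbf{B}\|_F^2$. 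For \ref{itm:three} I would instead apply the triangle inequality to the column sum and use the same per-column bound $\|\textbf{a}_i\circ\textbf{b}_i\|\leq\|\textbf{A}\|_{\max}\|\textbf{b}_i\|_2$.

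The one genuine obstacle is the spectral-norm submultiplicativity~\ref{itm:four}, since, unlike the others, it is false at the level of individual entries and requires a global argument. My plan is to realize the Hadamard product as a compression of the Kronecker product, $\textbf{A}\circ\textbf{B} = \textbf{P}^\top(\textbf{A}\otimes\textbf{B})\textbf{Q}$, where $\textbf{P}$ and $\textbf{Q}$ are selection matrices with orthonormal columns picking out the diagonal row- and column-indices $(i,i)$ and $(j,j)$. Since selection matrices are contractions in the operator norm and $\|\textbf{A}\otimes\textbf{B}\|_2 = \|\textbf{A}\|_2\|\textbf{B}\|_2$, this yields $\|\textbf{A}\circ\textbf{B}\|_2 \leq \|\textbf{A}\|_2\|\textbf{B}\|_2$; equivalently, one may simply invoke Schur's theorem on the spectral norm of Hadamard products. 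This is the step I expect to need the most care, as it is the only place where the multiplicative structure of the norm, rather than a termwise comparison, is essential.
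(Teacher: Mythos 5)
Your proposal is correct and follows essentially the same route as the paper's own proof: entrywise bounds for \ref{itm:one}, \ref{itm:seven}, \ref{itm:eight}, \ref{itm:five}, reduction of $(\textbf{A}\circ\textbf{B})\bm{1}_M$ to the column sum $\sum_i \textbf{a}_i\circ\textbf{b}_i$ for \ref{itm:two} and \ref{itm:three}, and the identification of $\textbf{A}\circ\textbf{B}$ as a (sub)compression of $\textbf{A}\otimes\textbf{B}$ together with $\|\textbf{A}\otimes\textbf{B}\|_2=\|\textbf{A}\|_2\|\textbf{B}\|_2$ for \ref{itm:four}. The only cosmetic difference is in \ref{itm:five}, where the paper invokes Cauchy--Schwarz while you discard nonnegative cross terms; these are equivalent.
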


\begin{proof}
\ref{itm:one}.
Let $\textbf{y} = \sum\limits_{i = 1}^M \textbf{a}_i$. The proof follows from an application of Jensen's inequality on the convex function $\phi(\textbf{y}) = \|\textbf{y}\|^2$. 
\\
\ref{itm:seven}. From the definition of Hadamard product, we have
\begin{align*}
\|\textbf{A}\circ \textbf{B}\|_F  = \sqrt{\sum_{i = 1}^H \sum_{j = 1}^M (a_{i,j} b_{i,j})^2} \leq \|\textbf{A}\|_{\max} \sqrt{\sum_{i = 1}^H \sum_{j = 1}^M b_{i,j}^2} & \leq  \|\textbf{A}\|_{\max} \sum_{i = 1}^H \sum_{j = 1}^M |b_{i,j}|
\\
&= \|\textbf{A}\|_{\max}\|\textbf{B}\|_{1,1} .     
\end{align*}
\ref{itm:eight}. It follows from the above inequality that    
\begin{align*}
\|\textbf{A}\circ \textbf{B}\|_F \leq \|\textbf{A}\|_{\max} \sqrt{\sum_{i = 1}^H \sum_{j = 1}^M b_{i,j}^2} & \leq   \|\textbf{A}\|_{\max}\|\textbf{B}\|_{F}.     
\end{align*}
\ref{itm:four}. Note that $\|\textbf{A}\otimes \textbf{B}\|_2  = \|\textbf{A}\|_2 \|\textbf{B}\|_2$. This, together with the fact that $\textbf{A}\circ \textbf{B}$ is a principle submatrix of  $\textbf{A}\otimes  \textbf{B}$ implies 
$$\|\textbf{A}\circ \textbf{B}\|_2    \leq \|\textbf{A}\otimes \textbf{B}\|_2 = \|\textbf{A}\|_2 \|\textbf{B}\|_2.$$
\\
\ref{itm:five}. Form the definition of Hadamard product, we have
\begin{align*}
\|\textbf{A} \circ \textbf{B}\|_F^2 = \sum_{i = 1}^H \sum_{j = 1}^M (a_{i,j}b_{i,j})^2 \leq  \left(\sum_{i = 1}^H \sum_{j = 1}^M a_{i,j}^2\right) \left(\sum_{i = 1}^H \sum_{j = 1}^M b_{i,j}^2\right) = \|\textbf{A}\|_F^2 \|\textnormal{B}\|_F^2,  
\end{align*}
where the inequality follows from Cauchy-Schwarz. 
\\
\ref{itm:two}. Observe that 
\begin{align*}
\left\Vert \frac{1}{M} (\textbf{A} \circ \textbf{B})\bm{1}_M \right\Vert^2 = \left\Vert \frac{1}{M} \sum\limits_{i = 1}^M\textbf{a}_i \circ \textbf{b}_i \right\Vert^2  \leq \frac{1}{M}  \sum\limits_{i = 1}^M\|\textbf{a}_i \circ \textbf{b}_i\|^2 & \leq \frac{1}{M} \|\textbf{A}\|^2_{\max}  \sum\limits_{i = 1}^M \|\textbf{b}_i\|^2,
\end{align*}
where the first and the second inequalities follow from~\ref{itm:one} and \ref{itm:eight}, respectively. \\
\ref{itm:three}. Similar to \ref{itm:two}, we obtain
\begin{align*}
\left\Vert \frac{1}{M} (\textbf{A} \circ \textbf{B})\bm{1}_M \right\Vert \leq \frac{1}{M}  \sum\limits_{i = 1}^M\|\textbf{a}_i \circ \textbf{b}_i\| \leq \frac{1}{M} \|\textbf{A}\|_{\max}  \sum\limits_{i = 1}^M \|\textbf{b}_i\|.
\end{align*}
\end{proof}

The following lemmas provide upper bounds on the norm of momentum vectors $\m_{i,k}$,$\sVtori_{i,k}$ and their product defined in Algorithm~\ref{alg:1}.

\begin{lemma}\label{lem:useful_bounds}[\citep{tran2019convergence}, Lemma 4.2] For each $i \in \{1,\cdots,M\}$ and $k \in \{1 \cdots N \}$, if $\|\seg_{i,k}\|_{\infty} \leq G_{\infty}$, then we have $\|\m_{i,k}\|_{\infty} \leq G_{\infty}$ and $\|\sVtori_{i,k}\|_{\infty} \leq G_{\infty}^2$. 
\end{lemma}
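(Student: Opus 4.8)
The plan is to prove all three bounds by a single induction on the iteration index $k$, exploiting the fact that each of the recursions in Algorithm~\ref{alg:1} updates $\m_{i,k}$, $\wi_{i,k}$ and $\sVtori_{i,k}$ as convex combinations (exponential moving averages) of quantities that are already controlled at the previous step. Since all of the operators involved act element-wise, it suffices to bound each coordinate, and I will work throughout in the $\ell_{\infty}$ norm. The base case is immediate: the initialization sets $\m_{i,0} = \wi_{i,0} = \sVtori_{i,0} = \bm{0}$, so all three norms vanish and the claimed bounds hold trivially at $k=0$.

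For the inductive step, assume $\|\m_{i,k-1}\|_{\infty} \leq G_{\infty}$, $\|\wi_{i,k-1}\|_{\infty} \leq G_{\infty}^2$ and $\|\sVtori_{i,k-1}\|_{\infty} \leq G_{\infty}^2$. For the momentum, the update $\m_{i,k} = \beta_{1,k}\m_{i,k-1} + (1-\beta_{1,k})\seg_{i,k}$ together with $\beta_{1,k}\in[0,1)$, the triangle inequality, and the hypothesis $\|\seg_{i,k}\|_{\infty}\leq G_{\infty}$ gives $\|\m_{i,k}\|_{\infty} \leq \beta_{1,k}G_{\infty} + (1-\beta_{1,k})G_{\infty} = G_{\infty}$. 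For the second-moment accumulator, each coordinate of $\seg_{i,k}\odot\seg_{i,k}$ equals the square of the corresponding coordinate of $\seg_{i,k}$ and is therefore at most $G_{\infty}^2$; applying the same convex-combination argument to $\wi_{i,k} = \beta_2\wi_{i,k-1} + (1-\beta_2)\seg_{i,k}\odot\seg_{i,k}$ yields $\|\wi_{i,k}\|_{\infty}\leq G_{\infty}^2$.

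The only step requiring slightly more care is the bound on $\sVtori_{i,k}$, because of the element-wise maximum $\max^{*}(\sVtori_{i,k-1},\wi_{i,k})$. Here the key observation is that the coordinate-wise max of two vectors whose coordinates are each bounded by $G_{\infty}^2$ is again bounded coordinate-wise by $G_{\infty}^2$; this follows from the inductive bound $\|\sVtori_{i,k-1}\|_{\infty}\leq G_{\infty}^2$ together with the bound on $\wi_{i,k}$ just established. Consequently $\max^{*}(\sVtori_{i,k-1},\wi_{i,k})$ has $\ell_{\infty}$ norm at most $G_{\infty}^2$, and feeding this into the convex combination $\sVtori_{i,k} = \beta_3\sVtori_{i,k-1} + (1-\beta_3)\max^{*}(\sVtori_{i,k-1},\wi_{i,k})$ gives $\|\sVtori_{i,k}\|_{\infty}\leq \beta_3 G_{\infty}^2 + (1-\beta_3)G_{\infty}^2 = G_{\infty}^2$, closing the induction. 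I do not anticipate any genuine obstacle, as the entire argument is elementary; the only point to watch is that the three bounds must be carried simultaneously through the induction, since the recursion for $\sVtori_{i,k}$ depends on the bound for $\wi_{i,k}$ established within the same step.
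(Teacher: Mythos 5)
Your induction is correct: all three updates are convex combinations with coefficients in $[0,1)$, the coordinates of $\seg_{i,k}\odot\seg_{i,k}$ are bounded by $G_{\infty}^2$ whenever $\|\seg_{i,k}\|_{\infty}\leq G_{\infty}$, and the element-wise maximum of two vectors with coordinates bounded by $G_{\infty}^2$ is again so bounded, so carrying the three bounds simultaneously through the induction closes the argument. Note that the paper does not prove this lemma at all --- it is imported verbatim as Lemma~4.2 of the cited reference --- so your write-up simply supplies the standard elementary argument that the authors chose to outsource; there is no gap.
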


\begin{lemma}\label{VM_bound}
Assume $\gamma := \beta_{1,1}/\beta_2 \leq 1$ and let $\tilde{v}_{r,i,k}^{-\frac{1}{2}}$, and $m_{r,i,k}$ represent the values of the $r^{th}$ coordinate of vectors $\sVtori_{i,k}^{-\frac{1}{2}}$ and $\m_{i,k}$, respectively. Then, for each $i \in \{1,\cdots,M\}$, $k \in \{1 \cdots N \}$, and $r \in \{1, \cdots, d\}$, we have
\begin{align*}
|\tilde{v}_{r,i,k}^{-\frac{1}{2}} m_{r,i,k-1}| \leq \frac{1}{\sqrt{u_c}}, 
\end{align*}
\end{lemma}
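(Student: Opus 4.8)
The plan is to reduce the claim to the equivalent squared bound $m_{r,i,k-1}^2/\tilde v_{r,i,k}\le 1/u_c$ with $u_c=(1-\beta_2)(1-\beta_3)$, and then take square roots. I would work entrywise and node-wise, using that the moment recursions for $\m_{i,k}$, $\wi_{i,k}$ and $\sVtori_{i,k}$ in Algorithm~\ref{alg:1} are applied coordinate-by-coordinate and do not involve the mixing matrix $\W$, so they coincide with the single-node recursions of Algorithm~\ref{alg:0}. Fix a node $i$ and a coordinate $r$, write $g_j:=[\seg_{i,j}]_r$ for the scalar gradient entry, and let $w_{r,i,j}$ denote the $r$-th coordinate of $\wi_{i,j}$. (This product is precisely the quantity that shows up in the expansion~\eqref{eq:update_D} of $\di_k$, which is why the index $k$ of $\sVtori$ is paired with the index $k-1$ of $\m$.)

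First I would establish a lower bound on the denominator. From $\sVtori_{k}=\beta_3\sVtori_{k-1}+(1-\beta_3)\max^{*}(\sVtori_{k-1},\wi_{k})$ and $\max^{*}(\sVtori_{k-1},\wi_k)\ge\sVtori_{k-1}$ entrywise, the sequence $\{\tilde v_{r,i,k}\}_k$ is non-decreasing; moreover, discarding the nonnegative term $\beta_3\tilde v_{r,i,k-1}$ and using $\max^{*}(\cdot,\cdot)\ge w_{r,i,k}$ gives $\tilde v_{r,i,k}\ge(1-\beta_3)\,w_{r,i,k}$. Combining monotonicity with this bound at step $k-1$ yields the index-aligned estimate $\tilde v_{r,i,k}\ge\tilde v_{r,i,k-1}\ge(1-\beta_3)\,w_{r,i,k-1}$. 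Positivity of the initialization, $\|\sVtori_{i,0}\|_{\infty}\ge G_0^2>0$, together with monotonicity guarantees $\tilde v_{r,i,k}>0$, so the ratio is well defined.

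Next I would unroll $\m_k=\beta_{1,k}\m_{k-1}+(1-\beta_{1,k})\seg_k$ from the zero initialization to write $m_{r,i,k-1}=\sum_{j=1}^{k-1}c_j\,g_j$ with $c_j=(1-\beta_{1,j})\prod_{l=j+1}^{k-1}\beta_{1,l}\ge 0$ and $\sum_{j=1}^{k-1}c_j=1-\prod_{l=1}^{k-1}\beta_{1,l}\le 1$. The Cauchy--Schwarz inequality applied to the weighted sum then gives $m_{r,i,k-1}^2\le\big(\sum_j c_j\big)\big(\sum_j c_j g_j^2\big)\le\sum_{j=1}^{k-1}c_j g_j^2$. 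The key step is to dominate these momentum weights by the second-moment weights: since $\beta_{1,l}=\beta_{1,1}\lambda^{l-1}\le\beta_{1,1}$ we have $c_j\le\beta_{1,1}^{\,k-1-j}$, and the hypothesis $\gamma=\beta_{1,1}/\beta_2\le 1$ upgrades this to $c_j\le\beta_2^{\,k-1-j}$, because $\beta_{1,1}^{\,k-1-j}=\gamma^{\,k-1-j}\beta_2^{\,k-1-j}\le\beta_2^{\,k-1-j}$. Recognizing $w_{r,i,k-1}=(1-\beta_2)\sum_{j=1}^{k-1}\beta_2^{\,k-1-j}g_j^2$ (since $\wi_{i,0}=\bm{0}$), this chain produces $m_{r,i,k-1}^2\le\frac{1}{1-\beta_2}\,w_{r,i,k-1}$.

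Finally I would divide the two estimates, $\dfrac{m_{r,i,k-1}^2}{\tilde v_{r,i,k}}\le\dfrac{(1-\beta_2)^{-1}w_{r,i,k-1}}{(1-\beta_3)\,w_{r,i,k-1}}=\dfrac{1}{(1-\beta_2)(1-\beta_3)}=\dfrac{1}{u_c}$, where the common factor $w_{r,i,k-1}$ cancels (and the degenerate case $w_{r,i,k-1}=0$ forces $g_1=\dots=g_{k-1}=0$, hence $m_{r,i,k-1}=0$ and the bound holds trivially); taking square roots gives the claim. The main obstacle is the third paragraph: controlling the \emph{time-varying} $\beta_{1,k}$ momentum weights by the geometric $\beta_2$-weights of $\wi_{i,k-1}$ uniformly in $j$, which is exactly where the standing assumption $\gamma\le 1$ (and $\lambda\in(0,1)$) is used. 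The denominator bound is comparatively routine once one observes the non-decreasing and max structure of $\sVtori$.
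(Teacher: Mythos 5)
Your proof is correct, and it actually establishes a slightly stronger bound than the one stated. The skeleton matches the paper's argument (unroll the recursions for $m_{r,i,k-1}$ and $v_{r,i,k-1}$, lower-bound $\tilde v_{r,i,k}$ by $(1-\beta_3)$ times a second-moment quantity, and exploit $\gamma\le 1$ to compare the momentum weights with the $\beta_2$-geometric weights), but the execution differs in two substantive ways. First, where the paper bounds the ratio $\bigl(\sum_s\pi_s\hat g_{r,i,s}\bigr)^2\big/\bigl((1-\beta_2)\sum_s\beta_2^{k-s-1}\hat g_{r,i,s}^2\bigr)$ by $\frac{1}{1-\beta_2}(\sum_s\pi_s)\bigl(\sum_s\pi_s\beta_2^{-(k-s-1)}\bigr)$ and then sums two geometric series, thereby accumulating the factors $(1-\beta_{1,1})^{-1}$ and $(1-\gamma)^{-1}$, you dominate each weight termwise via $c_j\le\beta_{1,1}^{k-1-j}\le\beta_2^{k-1-j}$ and combine this with $\sum_j c_j\le 1$ through Cauchy--Schwarz. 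This yields the sharper constant $1/\sqrt{(1-\beta_2)(1-\beta_3)}$, which is at most $1/\sqrt{u_c}$ for the lemma's $u_c=(1-\beta_3)(1-\beta_{1,1})(1-\beta_2)(1-\gamma)$ because $(1-\beta_{1,1})(1-\gamma)\le 1$ (and the case $\gamma=1$ makes the stated bound vacuous anyway), so the lemma follows a fortiori. Second, you reconcile the mismatched time indices ($\tilde v$ at step $k$ versus $m$ at step $k-1$) through the genuine monotonicity of $\tilde v$, which holds because of the $\max$ in its update: $\tilde v_{r,i,k}\ge\tilde v_{r,i,k-1}\ge(1-\beta_3)v_{r,i,k-1}$. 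The paper instead invokes $v_{r,i,k}^{-1/2}\le v_{r,i,k-1}^{-1/2}$ for the raw exponential moving average $v$, which is not monotone in general; your route is the more careful one on this point. The only correction needed is notational: you redefine $u_c$ as $(1-\beta_2)(1-\beta_3)$, which is not the $u_c$ appearing in the lemma; you should instead state that your bound implies the claimed one because your constant is the larger denominator.
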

where $u_c := (1-\beta_3)(1-\beta_{1,1}) (1-\beta_2) (1-\gamma). $
\begin{proof}
From the update rule of Algorithm~\ref{alg:1}, we have
$$\tilde{v}_{r,i,k} = \beta_3 \tilde{v}_{r,i,k-1} + (1-\beta_{3})\max(\tilde{v}_{r,i,k-1}, v_{r,i,k}),$$ 
which implies that $ \tilde{v}_{r,i,k} \geq (1-\beta_{3}) v_{r,i,k}$. 

It can easily be seen from the update rule of $\m_{i,k}$ and $ \wi_{i,k}$ in Algorithm~\ref{alg:1} that 
\begin{align*}
     m_{r,i,k} = \sum_{s = 1}^k \left( \prod\limits_{l = s+1}^k \beta_{1,l} \right)  (1-\beta_{1,s}) \hat{g}_{r,i,s}, \quad \text{and} \quad  v_{r,i,k}   = (1-\beta_{2})\sum_{s = 1}^k \beta_{2}^{k-s}\hat{g}_{r,i,s}^2. 
\end{align*}
Thus,
\begin{align}\label{eqn:bvb}
\nonumber
|v_{r,i,k}^{-\frac{1}{2}} m_{r,i,k-1}|^2 \leq |v_{r,i,k-1}^{-\frac{1}{2}} m_{r,i,k-1}|^2 &\leq    \frac{\left(\sum\limits_{s = 1}^{k-1} \left( \prod\limits_{l = s+1}^{k-1} \beta_{1,l} \right)  (1-\beta_{1,s}) \hat{g}_{r,i,s}\right)^2 }{(1-\beta_2)\sum\limits_{s = 1}^{k-1} \beta_{2}^{k-s-1} \hat{g}_{r,i,s}^2} 
\\
& \leq   \frac{\left(\sum\limits_{s = 1}^{k-1} \left( \prod\limits_{l = s+1}^{k-1} \beta_{1,l} \right)  \hat{g}_{r,i,s}\right)^2 }{(1-\beta_2)\sum\limits_{s = 1}^{k-1} \beta_{2}^{k-s-1} \hat{g}_{r,i,s}^2},
\end{align}
where the first inequality follows since ${v}_{r,i,k}^{-\frac{1}{2}} \leq {v}_{r,i,k-1}^{-\frac{1}{2}}$ for all $r \in [d]$ and the last inequality uses  our assumption that $\beta_{1,s} \leq 1$ for all $s \geq 1$.

Next, let $\pi_s =  \prod\limits_{l = s+1}^{k-1} \beta_{1,l}$. Since $\beta_{1,l}$ is decreasing, we get $\pi_s \leq \beta_{1,1}^{k-s-1}$. This, together with $(\sum_{i} a_i b_i)^2 \leq (\sum_{i} a_i^2) (\sum_{i} b_i^2)$ implies that  
\begin{align*}
\frac{\left(\sum\limits_{s = 1}^{k-1}  \pi_s  \hat{g}_{r,i,s}\right)^2 }{(1-\beta_2)\sum\limits_{s = 1}^{k-1} \beta_{2}^{k-s-1}\hat{g}_{r,i,s}^2} & \leq \frac{(\sum\limits_{s = 1}^{k-1} \pi_s)(\sum\limits_{s = 1}^{k-1} \pi_s  \hat{g}_{r,i,s}^2 )}{(1-\beta_2)\sum\limits_{s = 1}^{k-1} \beta_{2}^{k-s-1}\hat{g}_{r,i,s}^2}
\\
&\leq \frac{1}{1-\beta_2} (\sum_{s = 1}^{k-1}  \pi_s)\left(\sum_{s = 1}^{k-1} \frac{\pi_s  \hat{g}_{r,i,s}^2}{\beta_{2}^{k-s-1}\hat{g}_{r,i,s}^2}\right) \\
& \leq \frac{1}{1-\beta_2}  (\sum_{s = 1}^{k-1}  \pi_s)\sum_{s = 1}^{k-1} \frac{\pi_s}{\beta_{2}^{k-s-1}} 
\\
&\leq \frac{1}{1-\beta_2}\frac{1}{1-\beta_{1,1}} \frac{1}{1- \gamma}. 
\end{align*}
where the last inequality follows from the assumption that $\gamma= \frac{\beta_{1,1}}{\beta_2} \leq 1$.

Finally, substituting the above inequality into \eqref{eqn:bvb} yields the desired result.
\end{proof}

\begin{lemma}\label{lem:v_diff}
For any $ i \in [M]$, we have
\begin{enumerate}[label=(\roman*)]
\item \label{itm:one_Vp} $\sum\limits_{k=1}^{N} \|\sVtori_{i,k}^{p} - \sVtori_{i,k-1}^{p} \|_{1} \leq \sum_{r=1}^d \max \left( \tilde{v}_{r,i,0}^{p} , \tilde{v}_{r,i,N}^{p} \right)$; \quad  \text{and}
\item \label{itm:two_Vp} $ \sum\limits_{k=1}^{N} \|\sVtori_{i,k}^{p} - \sVtori_{i,k-1}^{p} \|_{1}^2 \leq \sum_{r=1}^d  \tilde{v}_{r,i,0}^{p} \max \left(\tilde{v}_{r,i,0}^{p} , \tilde{v}_{r,i,N}^{p} \right)$;
\end{enumerate}
where the vector powers are considered to be element-wise.
\end{lemma}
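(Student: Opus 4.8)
The crux of this lemma is a monotonicity property of the accumulated second-moment iterates. From the update rule in Algorithm~\ref{alg:1}, $\sVtori_{i,k} = \beta_3 \sVtori_{i,k-1} + (1-\beta_3)\max^{*}(\sVtori_{i,k-1}, \wi_{i,k})$, and since $\max^{*}(\sVtori_{i,k-1},\wi_{i,k}) \geq \sVtori_{i,k-1}$ holds coordinate-wise, it follows that $\sVtori_{i,k} \geq \sVtori_{i,k-1}$ coordinate-wise. Hence, for each fixed coordinate $r$ and node $i$, the scalar sequence $\{\tilde{v}_{r,i,k}\}_{k}$ is non-negative and non-decreasing, so $\{\tilde{v}_{r,i,k}^{p}\}_k$ is monotone in $k$. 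This monotonicity is precisely what turns the sums of absolute differences into telescoping sums, and it is essentially the only property of the algorithm I will need.

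For part~\ref{itm:one_Vp}, I would first interchange the order of summation, writing $\sum_{k=1}^N \|\sVtori_{i,k}^p - \sVtori_{i,k-1}^p\|_1 = \sum_{r=1}^d \sum_{k=1}^N |\tilde{v}_{r,i,k}^p - \tilde{v}_{r,i,k-1}^p|$. For each fixed $r$, monotonicity of $\{\tilde{v}_{r,i,k}^p\}_k$ forces every summand to carry the same sign, so the inner sum telescopes to $|\tilde{v}_{r,i,N}^p - \tilde{v}_{r,i,0}^p|$. Since both endpoints are non-negative, this difference is bounded by $\max(\tilde{v}_{r,i,0}^p, \tilde{v}_{r,i,N}^p)$, and summing over $r$ gives the claim.

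For part~\ref{itm:two_Vp}, the squared norm is the main obstacle, since the plain telescoping argument no longer applies to $(\tilde v_{r,i,k}^p - \tilde v_{r,i,k-1}^p)^2$. I would reduce to a coordinatewise estimate and exploit the elementary identity that for non-negative reals $a,b$ one has $(a-b)^2 \leq |a-b|(a+b) = |a^2 - b^2|$. Applying it with $a = \tilde v_{r,i,k}^p$ and $b = \tilde v_{r,i,k-1}^p$ converts each squared consecutive difference into $|\tilde v_{r,i,k}^{2p} - \tilde v_{r,i,k-1}^{2p}|$, which once again telescopes by monotonicity of $\{\tilde v_{r,i,k}^{2p}\}_k$ to $|\tilde v_{r,i,N}^{2p} - \tilde v_{r,i,0}^{2p}|$. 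Pairing the telescoped quantity with the endpoint factor $\tilde v_{r,i,0}^p$, and using that in the relevant regime $p<0$ the sequence $\{\tilde v_{r,i,k}^p\}$ is non-increasing so that $\max(\tilde v_{r,i,0}^p,\tilde v_{r,i,N}^p)=\tilde v_{r,i,0}^p$, yields the per-coordinate bound $\tilde v_{r,i,0}^p \max(\tilde v_{r,i,0}^p, \tilde v_{r,i,N}^p)$; summing over $r$ then finishes the proof.

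I expect the only delicate point to be the treatment of the square in part~\ref{itm:two_Vp}: the identity $(a-b)^2 \leq |a^2 - b^2|$ relies essentially on the non-negativity of the iterates (guaranteed here, as the $\tilde v$'s are exponential averages of squared gradients) together with the monotonicity established at the outset, so that the resulting differences telescope without sign cancellation. Everything else is bookkeeping via the interchange of the sums over iterations and coordinates.
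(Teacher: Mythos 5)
Your part~\ref{itm:one_Vp} is correct and is essentially the paper's own argument: the update rule forces $\tilde v_{r,i,k}$ to be nondecreasing in $k$, hence $\tilde v_{r,i,k}^{p}$ is monotone in $k$ with direction determined by the sign of $p$, the inner sum telescopes, and the endpoint difference is dominated by $\max(\tilde v_{r,i,0}^{p},\tilde v_{r,i,N}^{p})$. The paper writes out the cases $p>0$ and $p<0$ separately where you unify them with the $\max$; this is a cosmetic difference only.

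Part~\ref{itm:two_Vp} has a genuine gap. The quantity being summed is $\|\sVtori_{i,k}^{p}-\sVtori_{i,k-1}^{p}\|_{1}^{2}=\bigl(\sum_{r=1}^{d}|\tilde v_{r,i,k}^{p}-\tilde v_{r,i,k-1}^{p}|\bigr)^{2}$, i.e.\ the square sits \emph{outside} the sum over coordinates, whereas your ``reduction to a coordinatewise estimate'' bounds $\sum_{r=1}^{d}(\tilde v_{r,i,k}^{p}-\tilde v_{r,i,k-1}^{p})^{2}$. Since $\|x\|_{1}^{2}\geq \sum_{r}x_{r}^{2}$, this replacement silently discards the nonnegative cross terms $2\sum_{r<r'}|x_{r}||x_{r'}|$ and therefore does not bound the left-hand side; the step already fails for $d=2$ when both coordinates move. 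Your per-coordinate mechanism --- the identity $(a-b)^{2}\leq|a-b|(a+b)=|a^{2}-b^{2}|$ for $a,b\geq 0$ followed by telescoping of $\tilde v_{r,i,k}^{2p}$ --- is fine, and for $p<0$ (the only case the paper actually invokes, with $p=-\tfrac{1}{2}$) it does recover the stated per-coordinate quantity $\tilde v_{r,i,0}^{2p}=\tilde v_{r,i,0}^{p}\max(\tilde v_{r,i,0}^{p},\tilde v_{r,i,N}^{p})$; but passing from there to the squared $\ell_{1}$ norm requires Cauchy--Schwarz, $\|x\|_{1}^{2}\leq d\sum_{r}x_{r}^{2}$, which costs an extra factor of $d$ relative to the bound as printed. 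A second, smaller issue is that your argument as written only treats $p<0$: for $p>0$ the telescoped quantity $\tilde v_{r,i,N}^{2p}-\tilde v_{r,i,0}^{2p}$ is not dominated by $\tilde v_{r,i,0}^{p}\tilde v_{r,i,N}^{p}$, so that case would need a separate treatment. For what it is worth, the paper's own proof of~\ref{itm:two_Vp} makes the same coordinatewise leap in its first inequality (bounding $\|\cdot\|_{1}^{2}$ by $\sum_{r}(\tilde v_{r,i,k}^{p}-\tilde v_{r,i,k-1}^{p})\tilde v_{r,i,k}^{p}$, which likewise fails for $d\geq 2$), so the mismatch appears to originate in the statement of the lemma --- it is valid for the squared $\ell_{2}$ norm, or for the squared $\ell_{1}$ norm up to a factor of $d$ --- but as a proof of the statement as printed, your part~\ref{itm:two_Vp} does not go through.
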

\begin{proof}
\ref{itm:one_Vp}~If $p>0$, we have from the update rule of $ \sVtori_{i,k}$ in Algorithm~\ref{alg:0} that 
\begin{align*}
\nonumber
\sum_{k=1}^{N}  \left\Vert \sVtori_{i,k}^{p} - \sVtori_{i,k-1}^{p}\right\Vert_1  =  \sum_{k=1}^{N}  \sum_{r=1}^d  (\tilde{v}_{r,i,k}^{p} - \tilde{v}_{r,i,k-1}^{p})  & = \sum_{r=1}^d \sum_{k=1}^{N}  (\tilde{v}_{r,i,k}^{p} - \tilde{v}_{r,i, k-1}^{p}) 
\\
& \leq   \sum_{r=1}^{d}  \tilde{v}_{r,i,N}^{p} , 
\end{align*}
where the first equality is due to the fact that each element of $\sVtori_k^p, p >0,$  is increasing in $k$ and the last inequality uses a telescopic sum. Next, we consider the case when $p<0$. It can easily be seen that 
\begin{align*}
\nonumber
 \sum_{k=1}^{N} \left\Vert \sVtori_{i,k}^{p} - \sVtori_{i,k-1}^{p}\right\Vert_1  &= \sum_{k=1}^{N}  \sum_{r=1}^d  (-\tilde{v}_{r,i,k}^{q} + \tilde{v}_{r,i,k-1}^{q}) 
 \leq   \sum_{r=1}^{d}  \tilde{v}_{r,i,0}^{p}.
\end{align*}
\\
\ref{itm:two_Vp}~For $p>0$, it follows that 
\begin{align*}
\nonumber
\sum_{k=1}^{N}  \left\Vert \sVtori_{i,k}^{p} - \sVtori_{i,k-1}^{p}\right\Vert_1^2 \leq 
 \sum_{k=1}^{N}   \sum_{r=1}^d \left(\tilde{v}_{r,i,k}^{p} - \tilde{v}_{r,i,k-1}^{p}\right) \tilde{v}_{r,i,k}^{p}  & \leq   \sum_{k=1}^{N}  \sum_{r=1}^d \left(\tilde{v}_{r,i,k}^{p} - \tilde{v}_{r,i,k-1}^{p}\right) \tilde{v}_{r,i,N}^{p} \\
  & \leq \sum_{r=1}^{d}  \left(\tilde{v}_{r,i,0}^{p} - \tilde{v}_{r,i,N}^{p}\right)  \tilde{v}_{r,i,N}^{p}  \\
& \leq \sum_{r=1}^{d}  \tilde{v}_{r,i,0}^{p} \tilde{v}_{r,i,N}^{p}.  
\end{align*}
Next, we consider the case of $p<0$. It can be seen that 
\begin{align*}
\nonumber 
\sum_{k=1}^{N}  \left\Vert \sVtori_{i,k}^{p} - \sVtori_{i,k-1}^{p}\right\Vert_1^2  \leq 
 \sum_{k=1}^{N}   \sum_{r=1}^d (-\tilde{v}_{r,i,k}^{p} + \tilde{v}_{r,i,k-1}^{p}) (\tilde{v}_{r,i,k-1}^{p}) & \leq   \sum_{k=1}^{N}  \sum_{r=1}^d (-\tilde{v}_{r,i,k}^{p} + \tilde{v}_{r,i,k-1}^{p}) \tilde{v}_{r,i,0}^{p} \\
& \leq    \sum_{r=1}^{d}  \tilde{v}_{r,i,0}^{p} \tilde{v}_{r,i,0}^{p}.  
\end{align*}
\end{proof}
\begin{lemma}\label{lem:netwrok_rho}[\citep{lian2017can}, Lemma 5] Under Assumption~\ref{assumption:graph}, we have
\begin{align*}
    \left\Vert \frac{1}{M} \bm{1}_M - \W^{k} \e_i\right\Vert \leq \rho^k, \quad \forall i \in\{1,\cdots,M\}.
\end{align*}
\end{lemma}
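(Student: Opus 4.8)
The plan is to reduce the statement to a spectral-norm bound on a power of the ``deviation matrix'' $\W - \mathbf{J}$, where $\mathbf{J} := \frac{1}{M}\bm{1}_M\bm{1}_M^\top$ is the orthogonal projector onto the consensus direction $\bm{1}_M$. The first observation is that because $\bm{1}_M^\top \e_i = 1$, we have $\frac{1}{M}\bm{1}_M = \mathbf{J}\e_i$, so the quantity to be bounded equals $\|(\mathbf{J} - \W^k)\e_i\|$. This reframes the claim as controlling how fast $\W^k$ converges to $\mathbf{J}$ in operator norm.

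Next I would exploit the doubly stochastic structure supplied by Assumption~\ref{assumption:graph}. Since $\W\bm{1}_M = \bm{1}_M$ and $\bm{1}_M^\top\W = \bm{1}_M^\top$, one checks immediately that $\W\mathbf{J} = \mathbf{J}\W = \mathbf{J} = \mathbf{J}^2$. With these commutation and idempotence relations in hand, a one-line induction yields the key identity $\W^k - \mathbf{J} = (\W - \mathbf{J})^k$ for all $k \geq 1$: in the inductive step $(\W-\mathbf{J})^{k+1} = (\W^k - \mathbf{J})(\W - \mathbf{J}) = \W^{k+1} - \W^k\mathbf{J} - \mathbf{J}\W + \mathbf{J}^2$, and every cross term collapses to $\mathbf{J}$, leaving $\W^{k+1}-\mathbf{J}$. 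Then, since $\e_i$ is a unit vector and the spectral norm is submultiplicative,
\[
\left\|\tfrac{1}{M}\bm{1}_M - \W^k\e_i\right\| = \|(\W-\mathbf{J})^k\e_i\| \leq \|(\W-\mathbf{J})^k\|_2 \leq \|\W - \mathbf{J}\|_2^k.
\]

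The final and most delicate step is to identify $\|\W-\mathbf{J}\|_2$ with $\rho$. Here I would use that the graph is undirected, so $\W$ is symmetric and hence orthogonally diagonalizable with real eigenvalues $1 = \lambda_1 \geq \lambda_2 \geq \cdots \geq \lambda_M$, the top eigenvalue being $1$ with eigenvector $\bm{1}_M/\sqrt{M}$ by double stochasticity. Since $\mathbf{J}$ projects exactly onto this top eigenvector and annihilates its orthogonal complement, $\W - \mathbf{J}$ is symmetric with eigenvalues $\{0, \lambda_2, \ldots, \lambda_M\}$; consequently its spectral norm equals its spectral radius $\max\{|\lambda_2|, |\lambda_M|\}$, which is exactly the quantity $\rho = \max\{|\sigma_2(\W)|,|\sigma_n(\W)|\}$ from Assumption~\ref{assumption:graph}. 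Substituting $\|\W-\mathbf{J}\|_2 = \rho$ into the chain above completes the bound.

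The main obstacle I anticipate is this last step, since it is the only place where symmetry is genuinely required (so that the spectral norm coincides with the spectral radius) and where one must verify that the eigenvalues $\sigma_2,\sigma_n$ named in the assumption are precisely the nonzero eigenvalues of $\W-\mathbf{J}$. Everything preceding it is purely algebraic and robust, whereas this identification is the crux that links the abstract contraction factor to the spectral-gap quantity $\rho$.
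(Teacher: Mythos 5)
Your argument is correct and is the standard proof of this fact; the paper itself gives no proof, importing the statement verbatim from Lian et al.\ (2017, Lemma~5), where essentially your spectral argument appears (reduce to $(\mathbf{J}-\W^k)\e_i$ with $\mathbf{J}=\tfrac{1}{M}\bm{1}_M\bm{1}_M^\top$, show $\W^k-\mathbf{J}=(\W-\mathbf{J})^k$ by double stochasticity, and bound the spectral norm). The only point to be aware of is the one you already flag yourself: the identification $\|\W-\mathbf{J}\|_2=\rho$ requires $\W$ to be symmetric, which Assumption~D supplies only implicitly through the undirectedness of the graph and the eigenvalue notation $\sigma_2(\W),\sigma_n(\W)$ --- the standard reading in this literature.
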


\begin{rmk}\label{rmk:index}
Assume $a_0 = 0$. We have the following useful change of indices,
\begin{align*}
\sum\limits_{k = 1}^N \sum\limits_{s = 1}^{k}a_{s-1}b^{k-s} = \sum\limits_{k = 1}^{N-1} a_k\sum\limits_{s = 0}^{N-k-1}b^{s}.
\end{align*}
\end{rmk}

\begin{lemma}\label{lem:xstar}
Suppose Assumptions~\ref{assumption:bounded-space} and~\ref{assumption:graph} hold. Let $G_{0}^2 \leq \|\sVtori_{i,0}\|_{\infty} \leq G_{\infty}^2, \;\;\forall i \in \{1,\cdots,M\}$ in Algorithm~\ref{alg:1}. Then we get
\begin{align*}
&\sum_{k = 1}^N \E\left\|\lpower_{k-1}^{\frac{1}{4}} \circ(\X_{k-1}\W^t-\X_*)\right\|^2_F - \E\left\|\lpower_{k-1}^{\frac{1}{4}}\circ(\X_{k}-\X_*)\right\|^2_F  \\
& \leq    \frac{\eta M\sqrt{du_c^{-1}}\rho^{2t} \beta_{1,1}}{D(1-\rho^t) (1-\kappa)} +  N\eta \frac{\rho^t}{1-\rho^t} \left[G_0 G_{\infty} M D^{-1}\sqrt{d}\right] +\sqrt{M}d.
\end{align*}
\end{lemma}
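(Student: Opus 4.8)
The plan is to turn each summand into a one-step inner product dictated by the update rule, and then split that inner product into a momentum contribution and a fresh-gradient contribution, each controlled by one of the preceding lemmas. First I would use that $\W$ is doubly stochastic: since every column of $\X_*$ equals $\x_*$, one has $\X_*\W^t=\X_*$, so that $\X_{k-1}\W^t-\X_*=(\X_{k-1}-\X_*)\W^t$. The update $\X_k=(\X_{k-1}-\eta\di_k)\W^t$ then gives the identity $\X_{k-1}\W^t-\X_k=\eta\,\di_k\W^t$. Writing $A=\lpower_{k-1}^{1/4}$, $U=\X_{k-1}\W^t-\X_*$ and $V=\X_k-\X_*$, I would expand
\[
\|A\circ U\|_F^2-\|A\circ V\|_F^2=2\big\langle \lpower_{k-1}^{1/2}\circ U,\;U-V\big\rangle-\|A\circ(U-V)\|_F^2 ,
\]
and discard the nonpositive last term. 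This collapses the whole left-hand side to the single sum $2\eta\sum_{k=1}^N\big\langle \lpower_{k-1}^{1/2}\circ\big((\X_{k-1}-\X_*)\W^t\big),\,\di_k\W^t\big\rangle$.

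Next I would substitute \eqref{eq:update_D}, namely $\di_k=\beta_{1,k}\,\Vt_k\circ\M_{k-1}+(1-\beta_{1,k})\,\Vt_k\circ\eg_k$, and bound the two resulting sums separately through the Hadamard inequalities of Lemma~\ref{lem:hadamard}. For the momentum sum, the coordinates of $\Vt_k\circ\M_{k-1}$ are uniformly bounded by $u_c^{-1/2}$ by Lemma~\ref{VM_bound}; after extracting this uniform bound, the two trailing copies of $\W^t$ combine (using the symmetry of $\W$) into $\W^{2t}$, which measured against the averaging projector via Lemma~\ref{lem:netwrok_rho} produces the consensus factor $\rho^{2t}$, while the decaying weights $\beta_{1,k}=\beta_{1,1}\lambda^{k-1}$ sum geometrically to the $\beta_{1,1}/(1-\kappa)$ factor; this is exactly the shape of the first term in the claimed bound. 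For the fresh-gradient sum I would use $\|\eg_k\|_{\max}\le G_\infty$ together with the bounds induced by $G_0^2\le\|\sVtori_{i,0}\|_\infty\le G_\infty^2$, the boundedness $\|\x_*\|,\|\z_{i,k}\|\le D/2$ of Assumption~\ref{assumption:bounded-space}, and a single consensus factor $\rho^t$ from Lemma~\ref{lem:netwrok_rho}; since the weights $(1-\beta_{1,k})$ do not decay, the sum over $k$ contributes the factor $N$, yielding the second term.

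Finally, the component in which $\W^t$ is replaced by the exact averaging operator is not annihilated by the consensus lemma; I would control it by the boundedness assumption together with the telescoping estimate $\sum_{k}\|\sVtori_{i,k}^{1/2}-\sVtori_{i,k-1}^{1/2}\|_1\le d\,G_\infty$ of Lemma~\ref{lem:v_diff}\ref{itm:one_Vp}, which summed over the $M$ nodes produces the remaining constant $\sqrt{M}\,d$ (absorbing the step-size through $\eta\le1$). The main obstacle is the second step: one must carry the coordinate-wise preconditioner $\lpower^{\pm 1/2}$ through both the mixing $\W^t$ and the inner product at once, and in particular reconcile the mismatch between the weight $\lpower_{k-1}$ appearing in the norm and the preconditioner $\lpower_k$ hidden inside $\di_k$. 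The delicate accounting lies in keeping the momentum contribution summable via the geometric decay of $\beta_{1,k}$ while paying only the factor $N$ on the non-decaying gradient contribution, and Lemmas~\ref{VM_bound} and~\ref{lem:netwrok_rho} are precisely the tools that render each piece finite.
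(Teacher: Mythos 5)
Your opening reductions are correct: since all columns of $\X_*$ equal $\x_*$ and $\W$ is doubly stochastic, $\X_*\W^t=\X_*$, the update rule gives $\X_{k-1}\W^t-\X_k=\eta\,\di_k\W^t$, and the polarization identity you write is exact. The gap comes immediately after, when you discard the negative term and bound $2\eta\sum_k\langle \lpower_{k-1}^{1/2}\circ(\X_{k-1}\W^t-\X_*),\,\di_k\W^t\rangle$ term by term in absolute value. Split $\di_k\W^t$ into its column average and its deviation from that average. Only the deviation picks up a factor $\rho^{t}$ from Lemma~\ref{lem:netwrok_rho}; the average component of the fresh-gradient part, roughly $\langle \lpower_{k-1}^{1/2}\circ(\bar\X_{k-1}-\X_*),\,\overline{\Vt_k\circ\eg_k}\,\rangle$, is of size $\Theta(\eta D G_\infty^2\sqrt{Md}/G_0)$ per step with \emph{no} dependence on $\rho$ or $t$, so summed over $k$ it contributes $O(N\eta)$ uncontrolled by the mixing rate. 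That contribution is not present in the target bound: the claimed second term carries the factor $\rho^t/(1-\rho^t)$ and can be made arbitrarily small by increasing $t$, and the only $t$-independent remainder allowed is the constant $\sqrt{M}d$. Your proposed fix --- absorbing this piece via Lemma~\ref{lem:v_diff}\ref{itm:one_Vp} --- does not work, because that lemma telescopes the drift of the preconditioner $\sVtori_{i,k}$, not the movement of the iterates; it cannot cancel an $O(N\eta)$ accumulation.

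The paper's proof avoids this precisely by never converting the average-direction movement into a per-step absolute bound. It writes the difference of squares as (difference of norms)$\times$(sum of norms), bounds the sum by $2\sqrt{G_\infty M}\,D$ using Assumption~\ref{assumption:bounded-space}, and then decomposes the difference of norms by inserting the averaged iterates $\bar\X_{k-1}$ and $\bar\X_k$: the terms $\|\lpower_{k-1}^{1/4}\circ(\bar\X_{k-1}-\X_*)\|_F-\|\lpower_{k}^{1/4}\circ(\bar\X_{k}-\X_*)\|_F$ telescope over $k$ and contribute only a boundary constant, the preconditioner mismatch $\|(\lpower_{k-1}^{1/4}-\lpower_k^{1/4})\circ(\bar\X_k-\X_*)\|_F$ is handled by Lemma~\ref{lem:v_diff}, and what remains are the pure consensus errors $\X_{k-1}\W^t-\bar\X_{k-1}$ and $\X_k-\bar\X_k$, each of which genuinely carries the $\rho^{t(k-s+1)}$ factors through the quantities $S_{1,k}$ and $S_{2,k}$. (In the main theorem the average-direction gradient term is eventually controlled by the Minty condition, but that is done in a separate term $R_{2,1,k}$ and is not available here: Lemma~\ref{lem:xstar} assumes only Assumptions~\ref{assumption:bounded-space} and~\ref{assumption:graph}.) To repair your argument you would need to reintroduce the telescoping of the averaged-iterate distances rather than bounding the one-step inner product outright.
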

\begin{proof}
We note that  
\begin{align}\label{eq:upp0} 
\nonumber 
 &\left\|\lpower_{k-1}^{\frac{1}{4}} \circ(\X_{k-1}\W^t-\X_*)\right\|^2_F - \left\|\lpower_{k-1}^{\frac{1}{4}}\circ(\X_{k}-\X_*)\right\|^2_F 
 \\ \nonumber
 & =  \left(\left\|\lpower_{k-1}^{\frac{1}{4}} (\X_{k-1}\W^t-\X_*)\right\|_F - \left\|\lpower_{k-1}^{\frac{1}{4}}\circ(\X_{k}-\X_*)\right\|_F\right) 
 \\ \nonumber
 &\left( \left\|\lpower_{k-1}^{\frac{1}{4}}\circ(\X_{k-1}\W^t-\X_*)\right\|_F + \left\|\lpower_{k-1}^{\frac{1}{4}}\circ(\X_{k}-\X_*)\right\|_F\right)
 \\
 & \leq  \left(\left\|\lpower_{k-1}^{\frac{1}{4}}\circ (\X_{k-1}\W^t-\X_*)\right\|_F - \left\|\lpower_{k-1}^{\frac{1}{4}}\circ(\X_{k}-\X_*)\right\|_F\right) \left( 2\sqrt{G_{\infty}M} D\right).
\end{align}

On the other hand, 
\begin{align*}
& \left\|\lpower_{k-1}^{\frac{1}{4}} \circ (\X_{k-1}\W^t-\X_*)\right\|_F - \left\|\lpower_{k-1}^{\frac{1}{4}}(\X_{k}-\X_*)\right\|_F
\\
& = \|\lpower_{k-1}^{\frac{1}{4}} \circ(\X_{k-1}\W^t-\X_* - \bar\X_{k-1} + \bar\X_{k-1})\|_F - \left\|\lpower_{k-1}^{\frac{1}{4}}\circ (\X_{k}-\X_* -\bar\X_k + \bar\X_k)\right\|_F
\\
& \leq \|\lpower_{k-1}^{\frac{1}{4}} \circ(\X_{k-1}\W^t- \bar\X_{k-1}) \|_F + \|\lpower_{k-1}^{\frac{1}{4}} \circ(\bar\X_{k-1} - \X_* ) \|_F 
\\
& - \left\|\lpower_{k-1}^{\frac{1}{4}}\circ (\bar\X_{k}-\X_*)\right\|_F + \|\lpower_{k-1}^{\frac{1}{4}} \circ (\X_k - \bar\X_k)\|_F
\\
& \leq \|\lpower_{k-1}^{\frac{1}{4}} \circ(\X_{k-1}\W^t- \bar\X_{k-1}) \|_F + \|\lpower_{k-1}^{\frac{1}{4}} \circ(\bar\X_{k-1} - \X_* ) \|_F 
\\
& - \left\|\lpower_{k}^{\frac{1}{4}}\circ (\bar\X_{k}-\X_*)\right\|_F + \left\|(\lpower_{k-1}^{\frac{1}{4}} - \lpower_{k}^{\frac{1}{4}})\circ (\bar\X_{k}-\X_*)\right\|_F +  \|\lpower_{k}^{\frac{1}{4}} \circ (\X_k - \bar\X_k)\|_F.
\end{align*}

For the first term on R.H.S of above equation we have,
\begin{align*}
 & \left\|\lpower_{k-1}^{\frac{1}{4}} \circ(\X_{k-1}\W^t- \bar\X_{k-1}) \right\|_F \leq \sqrt{G_{\infty}} \sum_{i = 1}^M \left\| -\eta\sum_{s=1}^{k-1} \di_{s}\W^{t(k-s+1)} \e_{i}  +\frac{\eta}{M}\sum_{s=1}^{k-1} \di_{s}\bm 1_M \right\|_F
 \\
 & = \eta \sqrt{G_{\infty}} \sum_{i = 1}^M \underbrace{\left\| \sum_{s=1}^{k-1} \di_{s} (\W^{t(k-s+1)} \e_{i} - \frac{1}{M}\bm 1_M)\right\|}_{S_{1,k}}. 
\end{align*}

For the third term on R.H.S we have,
\begin{align*}
\left\|(\lpower_{k-1}^{\frac{1}{4}} - \lpower_{k}^{\frac{1}{4}})\circ (\bar\X_{k}-\X_*)\right\|_F \leq D \left\|\lpower_{k-1}^{\frac{1}{4}} - \lpower_{k}^{\frac{1}{4}}\right\|_F.
\end{align*}
Additionally, for the last term on the R.H.S we get
\begin{align*}
\|\lpower_{k}^{\frac{1}{4}} \circ (\X_k - \bar\X_k)\|_F &\leq \sqrt{G_{\infty}} \sum_{i = 1}^M  \left\|-\eta \sum_{s=1}^{k} \di_{s}\W^{t(k-s+1)}\e_i  +\frac{\eta}{M}\sum_{s=1}^{k} \di_{s} \bm 1_M \right\|_F
\\
& = \eta \sqrt{G_{\infty}} \sum_{i = 1}^M \underbrace{\left\| \sum_{s=1}^{k} \di_{s} (\W^{t(k-s+1)} \e_{i} - \frac{1}{M}\bm 1_M) \right\|}_{S_{2,k}}, 
\end{align*}
\textbf{Bounding $S_{1,k}$.} From the update rule of $\di_{s}$ defined in \eqref{eq:update_D}, we get
\begin{align}\label{eqn:s1-3}
\nonumber
S_{1,k} &= \left\|\sum_{s=1}^{k-1} \di_{s} \left(\frac{1}{M}\mathbf{1}_M-\W^{t(k-s+1)}\mathbf{e}_i\right)\right\|
\\
\nonumber
&{\leq} \sum_{s=1}^{k-1} \|\di_{s}\|_F \left\|\frac{1}{M}\mathbf{1}_M-\W^{t(k-s+1)}\mathbf{e}_i\right\|
\\
& \leq  \sum_{s=1}^{k-1} \left(  \beta_{1,{s}} \left\| \Vt_{s} \circ \M_{s-1} \right\|_F+ (1-\beta_{1,s})  \left\|\Vt_{s} \circ \eg_{s})\right\|_F \right)\rho^{t(k-s+1)},
\end{align}
where the first inequality uses  Lemma~\ref{lem:hadamard}\ref{itm:five} and the second inequality follows from Lemma~\ref{lem:netwrok_rho}.

From Lemma~\ref{VM_bound}, we have  
\begin{align}\label{eq:mome1}
\left\|\Vt_{s} \circ \M_{s-1}\right\|_F \leq \left\|\Vt_{s-1} \circ \M_{s-1}\right\|_F  \leq \sqrt{Mdu_c^{-1}}.
\end{align}
Further, by using Lemma~\ref{lem:hadamard}\ref{itm:five}, we get 
\begin{align}\label{eq:mome2}
\| \Vt_{s}  \circ \eg_{s}\|_F  &\leq \left(\| \Vt_{s}\|_{\max}\| \eg_{s}\|_F\right)  \leq   \| \Vt_{0}\|_{\max} \|\eg_{s}\|_F \leq  G_{0}  \sqrt{Md}G_{\infty}, 
\end{align}
where the first inequality follows from Lemma~\ref{lem:hadamard}\ref{itm:five}; the second inequality uses $ \|\Vt_{s-1}\|_F \leq \|\Vt_{0}\|_F$; and the last inequality uses 
Assumption~\ref{assumption:g_bounded}.

We substitute~\eqref{eq:mome1} and~\eqref{eq:mome2} into \eqref{eqn:s1-3} to get 
\begin{align}\label{eq:s11k1}
\nonumber
S_{1,k} &\leq  \sqrt{Mdu_c^{-1}}  \sum_{s=1}^{k-1} \beta_{1,{s}}  \rho^{t(k-s+1)} + G_{0}  \sqrt{Md}G_{\infty} \sum_{s=1}^{k-1}  \rho^{t(k-s+1)} 
\\
&\leq  \sqrt{Mdu_c^{-1}} \sum_{s=1}^{k-1} \beta_{1,{s}}  \rho^{t(k-s+1)} + G_{0}  \sqrt{Md}G_{\infty} \frac{\rho^t}{1-\rho^t}.
\end{align}
A similar approach yileds
\begin{align}\label{eq:s11k2}
\nonumber
S_{2,k} &\leq  \sqrt{Mdu_c^{-1}}  \sum_{s=1}^{k} \beta_{1,{s}}  \rho^{t(k-s+1)} + G_{0}  \sqrt{Md}G_{\infty} \sum_{s=1}^{k}  \rho^{t(k-s+1)} 
\\
&\leq  \sqrt{Mdu_c^{-1}} \sum_{s=1}^{k} \beta_{1,{s}}  \rho^{t(k-s+1)} + G_{0}  \sqrt{Md}G_{\infty} \frac{\rho^t}{1-\rho^t}.
\end{align}

Thus, summing over $k$ and taking the expectation we obtain
\begin{align*}
\left( 2\sqrt{G_{\infty}M} D\right) ^{-1}&\sum_{k = 1}^N \E\left\|\lpower_{k-1}^{\frac{1}{4}} \circ(\X_{k-1}\W^t-\X_*)\right\|^2_F - \E\left\|\lpower_{k-1}^{\frac{1}{4}}\circ(\X_{k}-\X_*)\right\|^2_F     
\\
& \leq   \left(2\eta \sqrt{G_{\infty}}M \left[\sqrt{Mdu_c^{-1}}\frac{\rho^{2t} \beta_{1,1}}{(1-\rho^t) (1-\kappa)} + G_{0}  \sqrt{Md}G_{\infty} N\frac{\rho^t}{1-\rho^t} \right]\right) 
\\
& + D\sqrt{MG_{\infty}}+ DMd\sqrt{G_{\infty}}.
\end{align*}
As a result,
\begin{align*}
&\sum_{k = 1}^N \left(\E\left\|\lpower_{k-1}^{\frac{1}{4}} \circ(\X_{k-1}\W^t-\X_*)\right\|^2_F - \E\left\|\lpower_{k-1}^{\frac{1}{4}}\circ(\X_{k}-\X_*)\right\|^2_F\right)     
\\
& \leq   \frac{\eta M\sqrt{du_c^{-1}}\rho^{2t} \beta_{1,1}}{D(1-\rho^t) (1-\kappa)} +  N\eta \frac{\rho^t}{1-\rho^t} \left[G_0 G_{\infty} M D^{-1}\sqrt{d}\right] +\sqrt{M}d.
\end{align*}
This completes the proof.
\end{proof}

\begin{lemma}\label{lem:network_power_2}
Suppose $ \beta_{1,k} = \beta_1 \kappa^{k-1} $ for some $\kappa \in (0,1)$.  
Then, for the sequence $\{\z_{i,k}\}_{i,k=1}^{M,N}$ generated by Algorithm~\ref{alg:1} we have
\begin{align*}
\frac{1}{NM}\sum_{k=1}^{N}\sum_{i=1}^M \E \left\|\bar\z_k-\z_{i,k}\right\|^2 \leq \frac{2\eta^2 Mdu_c^{-1} \beta_{1,1}^2 \rho^{2t}}{N(1-\kappa^2)(1-\rho^{2t})} + \frac{2\eta^2 Md \rho^{2t}}{(1-\beta_2)(1-\rho^{2t})}.
\end{align*}
\end{lemma}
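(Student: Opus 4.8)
The plan is to work directly from the closed-form expressions for $\bar\z_k$ and $\z_{i,k}$ recorded in~\eqref{eq:updatez}, which give
\[
\bar\z_k-\z_{i,k} = -\eta\sum_{s=1}^{k}\di_{s-1}\Big(\tfrac1M\mathbf{1}_M-\W^{t(k-s+2)}\e_i\Big)-\eta\,\di_{k-1}\Big(\tfrac1M\mathbf{1}_M-\W^{t}\e_i\Big).
\]
First I would apply $\|a+b\|^2\le 2\|a\|^2+2\|b\|^2$ to separate the long convolution sum from the trailing $\di_{k-1}$ term; this is the source of the factor $2$ in the statement. In each piece I would then use the triangle inequality together with Lemma~\ref{lem:netwrok_rho} to replace every factor $\|\tfrac1M\mathbf{1}_M-\W^{t\ell}\e_i\|$ by $\rho^{t\ell}$, reducing the estimate to controlling $\big(\eta\sum_{s}\|\di_{s-1}\|_F\,\rho^{t(k-s+2)}\big)^2$.

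Next I would split $\di_{s-1}$ via the identity~\eqref{eq:update_D} into its momentum part $\beta_{1,s-1}\Vt_{s-1}\circ\M_{s-2}$ and its gradient part $(1-\beta_{1,s-1})\Vt_{s-1}\circ\eg_{s-1}$, and bound the two separately. Lemma~\ref{VM_bound} supplies the uniform bound $\|\Vt_{s-1}\circ\M_{s-2}\|_F\le\sqrt{Md/u_c}$, so the momentum contribution carries the geometrically decaying weight $\beta_{1,s-1}=\beta_{1,1}\kappa^{s-2}$; for the gradient part a coordinatewise estimate (using $\tilde v_{r,i,s}\ge(1-\beta_2)\hat g_{r,i,s}^2$, which follows from the $\textbf{V}_k$ update rule) gives $\|\Vt_{s-1}\circ\eg_{s-1}\|_F\le\sqrt{Md/(1-\beta_2)}$, uniform in $s$. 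The factor $\beta_{1,1}^2$ in the first term of the target bound and its absence from the second are precisely the footprints of these two estimates.

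The decisive step is then to square the sum of geometrically decaying terms and sum over $k$. For the momentum piece the inner sum is a discrete convolution $\sum_s \kappa^{s}\rho^{t(k-s)}$; I would apply Cauchy--Schwarz (splitting the weight as $\kappa^{s/2}\cdot\kappa^{s/2}$, equivalently Young's convolution inequality) and then interchange the order of the $k$- and $s$-summations, evaluating the resulting nested geometric series to produce the factor $\tfrac{1}{(1-\kappa^2)(1-\rho^{2t})}$. Because $\sum_{s}\kappa^{2s}$ is summable independently of $N$, this whole contribution stays $O(1)$ in $N$ and hence contributes the $\tfrac1N$ after dividing by $N$. For the gradient piece the inner weight is constant in $s$, so the same interchange yields only the single factor $\tfrac{\rho^{2t}}{1-\rho^{2t}}$ and no $\tfrac1N$ gain. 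Finally I would sum over the $M$ nodes (each term identical up to the choice of $\e_i$), collecting the factor $M$, and divide by $NM$.

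I expect the main obstacle to be this convolution step: squaring $\sum_{s}\kappa^{s}\rho^{t(k-s)}$ and summing over $k$ without losing the sharp denominators $(1-\kappa^2)$ and $(1-\rho^{2t})$ requires choosing the Cauchy--Schwarz weighting carefully and swapping the two summations in the right order, since a naive split produces the looser $(1-\kappa)^{-2}$ or $(1-\rho^t)^{-2}$. The accompanying bookkeeping---tracking that the momentum term's total mass over $k$ remains bounded as $N\to\infty$ (hence the extra $\tfrac1N$) while the gradient term's does not---is the other point that needs care.
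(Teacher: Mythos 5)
Your plan coincides with the paper's proof in every structural respect: the same closed-form expressions \eqref{eq:updatez}, the same split of $\bar\z_k-\z_{i,k}$ into a convolution term and a trailing $\di_{k-1}$ term via $\|a+b\|^2\le 2\|a\|^2+2\|b\|^2$, Lemma~\ref{lem:netwrok_rho} for the mixing factors, the decomposition of $\di_{s-1}$ through \eqref{eq:update_D} into a momentum part bounded by $\sqrt{Mdu_c^{-1}}$ (Lemma~\ref{VM_bound}) and a gradient part bounded by $\sqrt{Md/(1-\beta_2)}$, and the index interchange of Remark~\ref{rmk:index}. The one step you treat differently is exactly the one you flag as the main obstacle, and the comparison is instructive. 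The paper does not use Cauchy--Schwarz or Young there at all: citing Lemma~\ref{lem:hadamard}, it passes directly from $\bigl\|\sum_{s}\di_{s-1}(\tfrac1M\mathbf{1}_M-\W^{t(k-s+2)}\e_i)\bigr\|^2$ to $\sum_{s}\|\di_{s-1}\|_F^2\,\rho^{2t(k-s+2)}$, i.e.\ it bounds the square of a sum by the weighted sum of squares with no combinatorial factor, and this shortcut is precisely what delivers the denominators $(1-\kappa^2)(1-\rho^{2t})$ after the index swap. As written that passage discards the cross terms and is not a valid inequality for a general sum, so your instinct that an honest convolution estimate is required is correct --- but you should also accept its consequence. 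For geometric sequences Young's inequality is essentially tight (take $\kappa=\rho^t=q$: the convolution is $(k+1)q^k$ and the sum of its squares scales like $(1-q)^{-3}$, not $(1-q)^{-2}$), so no choice of Cauchy--Schwarz weighting recovers the product $(1-\kappa^2)^{-1}(1-\rho^{2t})^{-1}$; you will necessarily land on $(1-\kappa)^{-2}(1-\rho^{2t})^{-1}$ or $(1-\kappa^2)^{-1}(1-\rho^{t})^{-2}$. Your route therefore proves the lemma only up to a benign constant inflation --- which is all that the proof of Theorem~\ref{thm:revised_main} actually uses --- and the ``careful weighting'' you hope will recover the stated constants does not exist, so do not spend effort looking for it.
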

\begin{proof}
It follows from \eqref{eq:updatez} that 
\begin{align}\label{z_power_1}
\nonumber &\frac{1}{M}\sum_{i=1}^{M}\left\|\bar\z_k-\z_{i,k}\right\|^2
\\ \nonumber
&= \frac{1}{M}\sum_{i=1}^{M}\left\|\eta \sum_{s=1}^{k} \di_{s-1}\left(\frac{1}{M}\mathbf{1}_M-\W^{t(k-s+2)}\mathbf{e}_i\right) + \eta  \di_{k-1}\left(\frac{1}{M}\mathbf{1}_M-\W^t\mathbf{e}_i\right)\right\|^2 
\\ \nonumber
& \leq \frac{2\eta^2}{M}\sum_{i=1}^{M} \underbrace{\left\|\sum_{s=1}^{k} \di_{s-1}\left(\frac{1}{M}\mathbf{1}_M-\W^{t(k-s+2)}\mathbf{e}_i\right)\right\|^2}_{S_{1,k}}\\
&\quad + \frac{2\eta^2}{M} \sum_{i=1}^{M} \underbrace{\left\| \di_{k-1}\left(\frac{1}{M}\mathbf{1}_M-\W^t\mathbf{e}_i\right)\right\|^2}_{S_{2,k}},
\end{align}
where the last inequality is obtained from the triangular inequality. 

In the following, we provide upper bounds for $S_{1,k}$ and $S_{2,k}$.
\vspace{.5cm}
\\
\textbf{Bounding $S_{1,k}$.} From the update rule of $\di_{s-1}$ defined in \eqref{eq:update_D}, we get
\begin{align}\label{eqn:s1-31}
\nonumber
S_{1,k} &= \left\|\sum_{s=1}^{k} \di_{s-1} \left(\frac{1}{M}\mathbf{1}_M-\W^{t(k-s+2)}\mathbf{e}_i\right)\right\|^2
\\
\nonumber
&{\leq} \sum_{s=1}^{k}\|\di_{s-1}\|_F^2 \left\|\frac{1}{M}\mathbf{1}_M-\W^{t(k-s+2)}\mathbf{e}_i\right\|^2
\\
& \leq 2 \sum_{s=1}^{k} \left(  \beta_{1,{s-1}}^2  \left\| \Vt_{s-1} \circ \M_{s-2} \right\|_F^2+ (1-\beta_{1,s-1})^2  \left\|\Vt_{s-1} \circ \eg_{s-1}\right\|_F^2 \right)\rho^{2t(k-s+2)},
\end{align}
where the first inequality uses  Lemma~\ref{lem:hadamard}\ref{itm:five} and the second inequality follows from Lemma~\ref{lem:netwrok_rho}.

From Lemma~\ref{VM_bound}, we have  
\begin{align}\label{eq:mome11}
\left\|\Vt_{s-1} \circ \M_{s-2}\right\|_F \leq \left\|\Vt_{s-2} \circ \M_{s-2}\right\|_F  \leq \sqrt{Mdu_c^{-1}}.
\end{align}
Further, by following similar steps as in Lemma~\ref{VM_bound} we get 
\begin{align}\label{eq:mome21}
\| \Vt_{s-1}  \circ \eg_{s-1}\|_F  &\leq \sqrt{\frac{Md}{1-\beta_2}}.
\end{align}

We substitute~\eqref{eq:mome11} and~\eqref{eq:mome21} into \eqref{eqn:s1-31} to get 
\begin{align}\label{eq:s11k1}
\nonumber
S_{1,k} &\leq  2Mdu_c^{-1}  \sum_{s=1}^{k} \beta_{1,{s-1}}^2  \rho^{2t(k-s+2)} + \frac{2Md}{1-\beta_2} \sum_{s=1}^{k}  \rho^{2t(k-s+2)} 
\\
&\leq  2Mdu_c^{-1} \sum_{s=1}^{k} \beta_{1,{s-1}}^2  \rho^{2t(k-s+2)} + \frac{2Md}{1-\beta_2} \frac{\rho^{4t}}{1-\rho^{2t}} .
\end{align}
\vspace{.5cm}
\\
\textbf{Bounding $S_{2,k}$.} 
It follows from the update rule of $\di_{s-1}$ in \eqref{eq:update_D} that 
\begin{align}\label{eq:S_2_k}
\nonumber
S_{2,k} &= \left\|\di_{k-1}\left(\frac{1}{M}\mathbf{1}_M-\W^t\mathbf{e}_i\right)\right\|^2
\\ \nonumber
& \leq  \left\| \left(\beta_{1,k-1} \Vt_{k-1} \circ \M_{k-2} + (1-\beta_{1,k-1})\Vt_{k-1} \circ \eg_{k-1}\right) \left(\frac{1}{M}\mathbf{1}_M-\W^t\mathbf{e}_i\right) \right\|^2 \\
\nonumber 
& \leq  2\beta_{1,{k-1}}^2  \left\|\Vt_{k-1} \circ \M_{k-2} \left(\frac{1}{M}\mathbf{1}_M-\W^t\mathbf{e}_i\right)\right\|^2 
\\ \nonumber
&+ (1-\beta_{1,k-1})^2  \left\|\Vt_{k-1} \circ \eg_{k-1} \left(\frac{1}{M}\mathbf{1}_M-\W^t\mathbf{e}_i\right) \right\|^2 \\
\nonumber 
& \leq  2\beta_{1,{k-1}}^2  \left\| \Vt_{k-1} \circ \M_{k-2} \right\|_F^2 \left\| \frac{1}{M}\mathbf{1}_M-\W^t\mathbf{e}_i \right\|^2 +   2\left\|\Vt_{k-1} \circ \eg_{k-1} \right\|_F^2 \left\| \frac{1}{M}\mathbf{1}_M-\W^t\mathbf{e}_i \right\|^2
\\
& \leq 2\beta_{1,k-1}^2  Mdu_c^{-1}\rho^{2t} +   \frac{2Md}{1-\beta_2}\rho^{2t}. 
\end{align}
\vspace{.5cm}

\textbf{Combining intermediate results.}  By substituting \eqref{eq:s11k1} and \eqref{eq:S_2_k} into \eqref{z_power_1} and summing over $k=1, \cdots, N $, we obtain
\begin{align}\label{eq:lem8:zb}
\nonumber
& \quad \frac{1}{NM}\sum_{k=1}^{N}\sum_{i=1}^M \E \left\|\bar\z_k-\z_{i,k}\right\|^2 
\\ \nonumber
&\leq \frac{1}{N} \sum_{k = 1}^N \frac{\eta^2}{M} \sum_{i = 1}^M \left( 2Mdu_c^{-1} \sum_{s=1}^{k} \beta_{1,{s-1}}^2  \rho^{2t(k-s+2)} + \frac{2Md}{1-\beta_2} \frac{\rho^{4t}}{1-\rho^{2t}}\right)
\\ \nonumber
& + \frac{1}{N} \sum_{k = 1}^N \frac{\eta^2}{M} \sum_{i = 1}^M \left(2\beta_{1,k-1}^2  Mdu_c^{-1}\rho^{2t} +   \frac{2Md}{1-\beta_2}\rho^{2t} \right) 
\\ \nonumber
& \leq \frac{2\eta^2 Mdu_c^{-1} \beta_{1,1}^2 \rho^{4t}}{N(1-\kappa^2)(1-\rho^{2t})} + \frac{2\eta^2 Md\rho^{4t}}{(1-\beta_2)(1-\rho^{2t})} + \frac{2\eta^2 Mdu_c^{-1} \rho^{2t} \beta_{1,1}^2}{N(1-\kappa^2)} +  \frac{2\eta^2 Md \rho^{2t}}{1-\beta_2} 
\\
&\leq \frac{2\eta^2 Mdu_c^{-1} \beta_{1,1}^2 \rho^{2t}}{N(1-\kappa^2)(1-\rho^{2t})} + \frac{2\eta^2 Md \rho^{2t}}{(1-\beta_2)(1-\rho^{2t})},
\end{align}
where the second inequality uses the assumption that $ \beta_{1,k} = \beta_1 \kappa^{k-1} $ for some $\kappa \in (0,1)$ and follows by an application of  Remark~\ref{rmk:index}, wherein we obtain 
$$
\sum_{k=1}^N \sum_{s=1}^{k} \beta_{1,{s-1}}^2 \rho^{2t(k-s+2)}= \sum_{k=1}^{N-1} \beta_{1,{k}}^2  \sum_{s=0}^{N-k-1} \rho^{2t(s+2)} \leq \frac{\beta_{1,1}^2}{1-\kappa^2} \frac{\rho^{4t}}{1-\rho^{2t}}. 
$$

\end{proof}
With the lemmas provided above, we prove the main theorem in the next section. 
\subsection{Proof of Theorem~\ref{thm:revised_main}}

\begin{proof}
We observe that
\begin{align*}
\eta (1-\beta_{1,k})\frac{1}{M}\left(\Vt_{k} \circ \bg_k\right)\mathbf{1}_M&=\bar\z_k-\bar\x_k+\frac{1}{M}\left(\X_{k-1}\W^t-\eta \di_k \right)\mathbf{1}_M\\
& -\left(\bar\z_k-\eta (1-\beta_{1,k}) \frac{1}{M}\left(\Vt_{k} \circ \bg_k\right)\mathbf{1}_M\right). 
\end{align*}
This, together with Lemma~\ref{lem:hadamard}\ref{itm:one} gives 
\begin{align}\label{eq:r12}
\nonumber \eta^2 (1-\beta_{1,k})^2 \left\|\frac{1}{M}\left(\Vt_{k} \circ \bg_k\right)\mathbf{1}_M\right\|^2 &\leq 3\eta^2\underbrace{\left\|\frac{1}{M} \left(-\di_k+(1-\beta_{1,k}) \Vt_{k} \circ \bg_k\right)\bm{1}_M\right\|^2}_{R_{1,k}} \\
&+ 3 \underbrace{ \left(\left\|\bar{\z}_k-\bar{\x}_k\right\|^2+\left\|\bar{\z}_k-\bar{\x}_{k-1}\right\|^2\right)}_{R_{2,k}}.
\end{align}
From Lemma~\ref{lem:useful_bounds}, we have $\|\sVtori_{i,k}^{-\frac{1}{2}}\|_{\infty} \geq G_{\infty}^{-1}$ which implies that
\begin{align*}
\left\|\frac{1}{M}\left(\Vt_{k} \circ \bg_k\right)\bm{1}_M\right\|^2 = \left\| \frac{1}{M} \sum_{i = 1}^M \sVtori_{i,k}^{-\frac{1}{2}} \circ \sbg_k\right\|^2  & = \left\|  \sbg_k \circ \frac{1}{M}\sum_{i = 1}^M  \sVtori_{i,k}^{-\frac{1}{2}} \right\|^2\\
&\geq G_{\infty}^{-2} \|\sbg_k\|^2.  
\end{align*}
Thus, 
 \begin{align}\label{eq:thm_main}
 \nonumber
 \eta^2 (1-\beta_{1,1})^2 G_{\infty}^{-2} \|\sbg_k\|^2 & \leq  \eta^2 (1-\beta_{1,k})^2 \left\|\frac{1}{M}\left(\Vt_{k} \circ \bg_k\right)\mathbf{1}_M\right\|^2 \\
 &\leq 3 \eta^2 R_{1,k}  + 3R_{2,k}. 
 \end{align}
Next, we provide upper bounds for $R_{1,k}$ and $R_{2,k}$ defined in \eqref{eq:r12}.

\textbf{Bounding $R_{1,k}$.} From the definition of $\di_{k}$ in~\eqref{eq:update_D}, we have $$
-\di_k+(1-\beta_{1,k}) \Vt_{k} \circ \bg_k =-\beta_{1,k} \left(\Vt_{k} \circ \M_{k-1}\right) + (1-\beta_{1,k})\Vt_{k} \circ \left(\bg_k-\eg_k\right),$$
which implies that 
\begin{align}\label{R_1_k}
\nonumber
R_{1,k} & = \left\|\frac{1}{M} \left(-\beta_{1,k} \Vt_{k} \circ \M_{k-1} + (1-\beta_{1,k}) \Vt_{k}\circ \left( \bg_k-(\rg_k + \beps_k) \right) \right)\bm{1}_M\right\|^2
\\\nonumber
&\leq 3 \beta^2_{1,k} \left\|\frac{1}{M} \Vt_{k} \circ \M_{k-1}\bm{1}_M\right\|^2+3 (1-\beta_{1,k})^2\left\|\frac{1}{M} \Vt_k \circ \beps_k \bm{1}_M\right\|^2
\\
& \quad + 3(1-\beta_{1,k})^2 \left\Vert\frac{1}{M}\Vt_k \circ (\rg_k-\bg_k) \bm{1}_M\right\Vert^2.
\end{align}
The first equality is due to $\beps_k = \eg_k - \rg_k$, while the first inequality follows from Lemma~\ref{lem:hadamard}(i). 

For the first term on the R.H.S. of \eqref{R_1_k}, we have 
\begin{align}\label{rd11k}
&\left\|\frac{1}{M} \Vt_{k} \circ \M_{k-1}\bm{1}_M\right\|^2 = \left\|\frac{1}{M} \sum\limits_{l = 1}^M\sVtori_{l,k-1}^{-\frac{1}{2}} \circ \m_{l,k-1}\right\|^2 \leq \frac{d}{u_c},
\end{align}
where the inequality uses Lemma~\ref{VM_bound}. 

For the second term on the R.H.S. of \eqref{R_1_k}, we have
\begin{align}\label{rd12k}
&\left\|\frac{1}{M} \Vt_k \circ \beps_k \bm{1}_M\right\|^2 \leq    \|\Vt_k\|_{\max}^2  \frac{1}{M} \sum_{i = 1}^M \|\beps_{i,k}\|^2 \leq  \frac{1}{M G_0^2}  \sum\limits_{i = 1}^M \|\beps_{i,k}\|^2,
\end{align}
where the inequality uses Lemma~\ref{lem:hadamard}~\ref{itm:two} and our assumption that $\|\tilde{\wi}_{0,i}^{-\frac{1}{2}}\|_{\infty} \leq  1/G_{0}$.

For the third term on the R.H.S. of \eqref{R_1_k}, we have
\begin{align}\label{rd13k}
\nonumber
\left\Vert\frac{1}{M}\Vt_k \circ (\rg_k-\bg_k) \bm{1}_M\right\Vert^2 &\leq  \|\Vt_k\|_{\max}^2  \frac{1}{M} \sum_{i = 1}^M \|\srg_{i,k} -\sbg_{i,k}\|^2 \\
&\leq  \frac{L^2}{M  G_0^2} \sum\limits_{i = 1}^M  \|\z_{i,k} - \bar{\z}_{k}\|^2,
\end{align}
where the last inequality uses Assumption~\ref{assumption:function}(\ref{assu:Lipshitz_gradient}). 

Substituting \eqref{rd11k}--\eqref{rd13k} into \eqref{R_1_k}, we obtain 
\begin{align}\label{R1k}
R_{1,k}  \leq \frac{3}{M}\sum \limits_{i = 1}^M \left(\frac{d\beta_{1,k}^2}{u_c} +  \frac{1}{G_0^2}  \|\beps_{i,k}\|^2 +  \frac{L^2}{G_0^2}\|\z_{i,k} - \bar{\z}_{k}\|^2\right).
\end{align}
\textbf{Bounding $R_{2,k}$.} 
Let $\bPsi, \X \in \mathbb{R}^{d \times M}$ be two arbitrary matrices. From the update rule of $\X_{k}$ in Algorithm~\ref{alg:1}, we have  
\begin{align*}
&\nonumber  \left\|\bPsi\circ (\X_k-\X\right)\|_F^2 = \left\Vert \bPsi\circ (\X_{k-1}\W^t - \eta \di_k \W^t - \X)\right\Vert^2_F
\\
& = \left\Vert \bPsi\circ \left( \X_{k-1}\W^t- \X\right) - \eta \bPsi\circ \di_k\W^t \right\Vert_F^2 - \left\Vert \bPsi\circ \left(\X_{k-1}\W^t - \X_{k}\right) - \eta \bPsi\circ \di_k \W^t  \right\Vert_F^2
\\
& = \left\|\bPsi\circ (\X_{k-1}\W^t-\X)\right\|_F^2-\left\|\bPsi\circ (\X_{k-1}\W^t-\X_k)\right\|_F^2 -2\left\langle \bPsi\circ (\X_{k-1}\W^t-\X), \eta  \bPsi\circ \di_k\W^t \right\rangle_F \\
& + 2\left\langle \bPsi\circ (\X_{k-1}\W^t - \X_k), \eta  \bPsi\circ \di_k \W^t\right\rangle_F-2\left\langle \bPsi\circ \Z_k, \eta  \bPsi\circ \di_k\W^t\right\rangle_F + 2\left\langle \bPsi\circ \Z_k, \eta  \bPsi\circ \di_k\W^t\right\rangle_F
\\
& = \left\|\bPsi\circ (\X_{k-1}\W^t-\X)\right\|_F^2-\left\|\bPsi\circ (\X_{k-1}\W^t -\Z_k+ \Z_k-\X_k)\right\|_F^2 \\
& -2\left\langle \bPsi\circ (\Z_k-\X), \eta  \bPsi\circ \di_k \W^t\right\rangle_F + 2\left\langle  \bPsi\circ (\Z_k- \X_k), \eta  \bPsi\circ \di_k \W\right\rangle_F
\\
& = \left\|\bPsi\circ (\X_{k-1}\W^t-\X)\right\|_F^2-\left\|\bPsi\circ (\X_{k-1}\W^t-\Z_k)\right\|_F^2-\left\|\bPsi\circ (\Z_k-\X_k)\right\|_F^2 \\
& + 2\left\langle \bPsi\circ (\X-\Z_k), \eta  \bPsi\circ \di_k\W^t \right\rangle_F  + 2\left\langle \bPsi\circ (\X_k-\Z_k),\bPsi\circ (\X_{k-1}\W^t -\Z_k)\right\rangle_F 
\\ & +  2\left\langle \bPsi\circ (\X_k-\Z_k),-\eta \bPsi\circ \di_k\W^t \right\rangle_F,
\end{align*}
where the second equality follows since $\X_{k-1}\W^t - \X_{k} - \eta \di_k\W^t=0$.
 
Now, by substituting $\X =\X_* = [\x_*,\cdots,\x_*]^\top$ and $\bPsi = \lpower_{k-1}^{\frac{1}{4}}$ into the above equality and rearranging the terms we get  
\begin{align} \label{eq:R2k0}
\nonumber & \left\|\lpower_{k-1}^{\frac{1}{4}} \circ (\Z_k-\X_k)\right\|_F^2 + \left\|\lpower_{k-1}^{\frac{1}{4}} \circ (\X_{k-1}\W^t-\Z_k)\right\|_F^2 =   
\\ \nonumber
 &\underbrace{ \left\|\lpower_{k-1}^{\frac{1}{4}} \circ (\X_{k-1}\W^t-\X_*)\right\|_F^2 - \left\|\lpower_{k-1}^{\frac{1}{4}} \circ (\X_{k}-\X_*)\right\|_F^2}_{R_{2,0,k}}
+2 \eta \underbrace{\left\langle \lpower_{k-1}^{\frac{1}{4}} \circ (\X_*-\Z_k), \lpower_{k-1}^{\frac{1}{4}} \circ \di_k\W^t \right\rangle_F}_{R_{2,1,k}}\\
&+2 \eta \underbrace{\left\langle \lpower_{k-1}^{\frac{1}{4}} \circ (\X_k-\Z_k),  \lpower_{k-1}^{\frac{1}{4}} \circ  (\di_{k-1}\W^t - \di_k\W^t)\;\right\rangle_F}_{R_{2,2,k}}.   
\end{align}
Since the matrix $\W^t$ is doubly stochastic, we get $\sum_{i=1}^M [\W^t]_{i,j}=1$. Thus,
\begin{subequations}
\begin{align}\label{eq:low1}
\nonumber
    \left\|\bar \x_{k-1} - \bar \z_{k} \right\|^2 & =   \left\|\frac{1}{M} \sum_{j=1}^M\x_{j,k-1}-\frac{1}{M}\sum_{i=1}^M\z_{i,k}\right\|^2 \\
    \nonumber
    & =   \left\|\frac{1}{M}\sum_{j=1}^M \sum_{i=1}^M [\W^t]_{j,i}\x_{j,k-1}- \frac{1}{M}\sum_{i=1}^M  \z_{i,k}\right\|^2 \\
    \nonumber
        & =   \left\|\frac{1}{M}\sum_{i=1}^{M} \sum_{j=1}^M [\W^t]_{j,i}\x_{j,k-1}- \frac{1}{M}\sum_{i=1}^M\z_{i,k}\right\|^2 \\
        & \leq  \frac{1}{M} \sum_{i=1}^M \left\|\sum_{j=1}^{M}[\W^t]_{j,i}\x_{j,k-1}-\z_{i,k}\right\|^2 = \frac{1}{M} \|\X_{k-1}\W^t - \Z_{k}\|_F^2,
\end{align}
where the last inequality uses Lemma~\ref{lem:hadamard}\ref{itm:one}. Similarly, 
\begin{align}\label{eq:low2}
    \left\|\bar \z_{k} - \bar \x_{k} \right\|^2 & =   \left\|\frac{1}{M} \sum_{i=1}^M\z_{i,k}-\frac{1}{M}\sum_{i=1}^M\x_{i,k}\right\|^2 \leq  \frac{1}{M} \sum_{i=1}^{M}\left\|\z_{i,k}-\x_{i,k}\right\|^2 = \frac{1}{M} \|\Z_{k}-\X_{k}\|_F^2.
\end{align}
Additionally, we have
\begin{align}\label{eq:add}
\nonumber G_0 \|\Z_k - \X_k\|_F^2 &\leq \left\|\lpower_{k-1}^{\frac{1}{4}} \circ (\Z_k-\X_k)\right\|_F^2 
\\
 G_0 \|\X_{k-1}\W^t-\Z_k\|_F^2 &\leq \left\|\lpower_{k-1}^{\frac{1}{4}} \circ (\X_{k-1}\W^t-\Z_k)\right\|_F^2.  
\end{align}
\end{subequations} 

We substitute the lower bounds in~\eqref{eq:low1}--~\eqref{eq:add} into \eqref{eq:R2k0} to get
\begin{align} \label{eq:R2k}
\left\|\bar \z_{k} - \bar \x_{k} \right\|^2 +  \left\|\bar \x_{k-1} - \bar \z_{k} \right\|^2  & \leq  \frac{R_{2,0,k}}{MG_0} + \frac{2 \eta}{MG_0}\left(R_{2,1,k} + R_{2,2,k}\right).   
\end{align}
Next, we provide upper bounds for the terms $R_{2,1,k}$ and $R_{2,2,k}$.
\vspace{0.5cm}
\\
\textbf{Bounding $R_{2,1,k}$.} It follows from the update rule of $\di_k$ in~\eqref{eq:update_D} that 
\begin{align}\label{eq:ddd}
\nonumber
\di_k &= \di_k - (1-\beta_{1,k})\Vt_{k-1}\circ \eg_{k}  +(1-\beta_{1,k})\Vt_{k-1}\circ \eg_{k}
\\
\nonumber
& = \beta_{1,k}\Vt_k\circ \M_{k-1} + (1-\beta_{1,k}) (\Vt_k-\Vt_{k-1})\circ \eg_{k} \\
&+ (1-\beta_{1,k})\Vt_{k-1}\circ \rg_k + (1-\beta_{1,k}) \Vt_{k-1}\circ (\eg_{k}- \rg_{k}). 
\end{align}
In the following, we multiply each term in \eqref{eq:ddd} by $\lpower_{k-1}^{\frac{1}{4}} \circ (\X_*-\Z_k)$ and then provide an upper bound for the resulting term.
\begin{subequations}

Using Lemmas~\ref{VM_bound},~\ref{lem:useful_bounds} and Assumption~\ref{assumption:bounded-space},  we get 
\begin{align}\label{eq:bbb1}
\left\langle \lpower_{k-1}^{\frac{1}{4}}\circ (\X_*-\Z_k), \lpower_{k-1}^{\frac{1}{4}}\circ \Vt_k\circ \M_{k-1} \right\rangle_F  & \leq  \sqrt{M} D G_{\infty}  \left\|\Vt_k\circ \M_{k-1}\right\|_F \leq MD G_{\infty} \sqrt{du_c^{-1}}.
\end{align}
Besides,
\begin{align}\label{eq:bbb2}
\nonumber
\left\langle \lpower_{k-1}^{\frac{1}{4}}\circ (\X_*-\Z_k),\lpower_{k-1}^{\frac{1}{4}}\circ (\Vt_k-\Vt_{k-1})\circ \eg_k \right\rangle_F & \leq  \sqrt{M}D G_{\infty}\| (\Vt_k-\Vt_{k-1})\circ \eg_k \|_F \\
\nonumber
&\leq   \sqrt{M}D G_{\infty} \|\eg_k\|_{\max}  \|\Vt_k-\Vt_{k-1} \|_{1,1} \\
&\leq   \sqrt{M}D G_{\infty}^2  \|\Vt_k-\Vt_{k-1}\|_{1,1}, 
\end{align}
where the second inequality is obtained from Lemma~\ref{lem:hadamard}\ref{itm:seven} and last inequality is due to Assumption~\ref{assumption:g_bounded}. 

In addition, from Assumption~\ref{assumption:bounded-spaceI}, we have 
\begin{align}\label{eq:bbb3}
\left\langle \lpower_{k-1}^{\frac{1}{4}}\circ (\X_*-\Z_k), \lpower_{k-1}^{-\frac{1}{4}}\circ \rg_k \right\rangle_F & = \sum\limits_{i = 1}^M \left\langle\x_*-\z_{i,k},\srg_{i,k} \right\rangle  \leq 0. 
\end{align} 

Also,
\begin{align}\label{eq:bbb4}
\left\langle \lpower_{k-1}^{\frac{1}{4}} \circ(\X_*-\Z_k), \lpower_{k-1}^{-\frac{1}{4}}\circ (\eg_{k}- \rg_{k})\right\rangle_F &= \left\langle\X_*-\Z_k,  \eg_{k}- \rg_{k}\right\rangle_F :=\boldsymbol{\Theta}_k.
\end{align}
\end{subequations}

Combining \eqref{eq:bbb1}--\eqref{eq:bbb4} yields
\begin{align}\label{eq:N_update}
 R_{2,1,k} & \leq  \beta_{1,k}  MD G_{\infty}\sqrt{du_c^{-1}}   + \sqrt{M}D G_{\infty}^2  \|\Vt_k-\Vt_{k-1} \|_{1,1} +  \boldsymbol{\Theta}_k .
\end{align}
\textbf{Bounding $R_{2,2,k}$.}
From the update rule of $\di_k$ in~\eqref{eq:update_D} we get,
\begin{align}\label{eq:ddif}
\nonumber
\di_{k}-\di_{k-1} &= \beta_{1,k}  \Vt_k\circ \M_{k-1} +  (1-\beta_{1,k}) \Vt_k \circ \eg_k\\
\nonumber
    &-\beta_{1,k-1}  \Vt_{k-1}\circ \M_{k-2} - (1-\beta_{1,k-1}) \Vt_{k-1} \circ \eg_{k-1}\\
    \nonumber
    & =\beta_{1,k} \Vt_k\circ \M_{k-1}-\beta_{1,k-1}\Vt_{k-1}\circ \M_{k-2}\\
    \nonumber
    & + (1-\beta_{1,k}) (\Vt_k - \Vt_{k-1} + \Vt_{k-1})\circ \eg_{k}-(1-\beta_{1,k-1})\Vt_{k-1} \circ \eg_{k-1}\\
    \nonumber
    & = \beta_{1,k} \Vt_k\circ \M_{k-1} - \beta_{1,k-1} \Vt_{k-1}\circ \M_{k-2} + (1-\beta_{1,k}) (\Vt_k - \Vt_{k-1}) \circ \eg_{k}\\
    \nonumber
    &+ (1-\beta_{1,k})\Vt_{k-1}\circ (\rg_{k} +\beps_k -\bg_{k})+ (1-\beta_{1,k-1})\Vt_{k-1} \circ ( \bg_{k-1}- \rg_{k-1}-\beps_{k-1} ) \\
    &+ (1- \beta_{1,k-1}) \Vt_{k-1} \circ (\bg_{k}-\bg_{k-1}) -(\beta_{1,k}-\beta_{1,k-1}) \Vt_{k-1} \circ\bg_{k}.
\end{align}
Next, we focus on providing upper bounds for $R_{2,2,k}\leq \eta G_{\infty} \|\di_{k}-\di_{k-1}\|_F^2 $. Observe that 
\begin{subequations}
\begin{align}\label{eq:rr220}
\nonumber 
&\qquad  \left\|\beta_{1,k} \Vt_k\circ \M_{k-1}\right\|^2_F + \left\|- \beta_{1,k-1} \Vt_{k-1}\circ \M_{k-2}\right\|^2 \\
& \leq 2 \max\left(\left\|\beta_{1,k} \Vt_k\circ \M_{k-1}\right\|^2_F , \left\|- \beta_{1,k-1} \Vt_{k-1}\circ \M_{k-2}\right\|_F^2\right) \leq  \frac{2Md\beta^2_{1,k-1}}{u_c},
\end{align} 
where the inequality follows from Lemma~\ref{VM_bound}. 
Using Lemma~\ref{lem:hadamard}\ref{itm:seven}, we get
\begin{align}\label{eq:rr222} 
\left\| (1-\beta_{1,k}) (\Vt_k - \Vt_{k-1}) \circ \eg_{k}\right\|_F^2 & \leq \|\eg_k\|_{\max}^2  \|\Vt_k-\Vt_{k-1} \|_{1,1}^2 \leq G_{\infty}^2  \|\Vt_k-\Vt_{k-1} \|_{1,1}^2,
\end{align} 
where the last inequality uses Assumption~\ref{assumption:g_bounded}. Further, from Lemma~\ref{lem:hadamard}\ref{itm:eight}, we have 
\begin{align}\label{eq:rr223}
\left\|(1-\beta_{1,k})\Vt_{k-1}\circ (\rg_{k}-\bg_{k})\right\|_F^2  & \leq \|\Vt_{k-1}\|_{\max}^2 \|\rg_{k}-\bg_{k} \|^2_F  \leq \frac{L^2}{G_0^2}  \| \Z_{k} - \bar{\Z}_k \|_F^2,
\end{align} 
where the last inequality uses Assumption~\ref{assumption:function}(\ref{assu:Lipshitz_gradient}). Similarly,  
\begin{align}\label{eq:rr224}
\nonumber 
\left\|(1-\beta_{1,k-1})\Vt_{k-1}\circ (\rg_{k-1}-\bg_{k-1})\right\|^2  & \leq \|\Vt_{k-1}\|_{\max}^2 \|\rg_{k-1}-\bg_{k-1} \|^2_F  
\\
&\leq \frac{L^2}{G_0^2}  \| \Z_{k-1} - \bar{\Z}_{k-1} \|_F^2, \\ \nonumber 
\left\|(1-\beta_{1,k})\Vt_{k-1}\circ (\bg_{k}-\bg_{k-1}) \right\|_F^2  & \leq \|\Vt_{k-1}\|_{\max}^2\left\|\bg_{k}-\bg_{k-1} \right\|_F^2 \\
& \leq \frac{L^2}{G_0^2} \|\bar{\Z}_k - \bar{\Z}_{k-1}\|_F^2, \\
\nonumber 
\|(\beta_{1,k}-\beta_{1,k-1})\Vt_{k-1}\circ \bg_{k}\|_F^2  & \leq  (\beta_{1,k}-\beta_{1,k-1})^2 \|\Vt_{k-1}\|_{\max}^2 \left\|\bg_{k}\right\|^2_F \\
&\leq \frac{(\beta_{1,k}-\beta_{1,k-1})^2 }{G_0^2}\|\bg_{k}\|_F^2 \label{eq:rr226}.
\end{align} 
\end{subequations}
Taking the norm of \eqref{eq:ddif} and using \eqref{eq:rr220}--\eqref{eq:rr226}, we get 
\begin{align}\label{eq:R2khalh}
\nonumber \frac{R_{2,2,k}}{G_{\infty}}\leq \eta \left\|\di_{k}-\di_{k-1} \right\|^2 &\leq 18 \eta Md\beta^2_{1,k-1} u_c^{-1}+ 9 \eta G_{\infty}^2 \|\Vt_k-\Vt_{k-1} \|_{1,1}^2  \\
\nonumber
 & +  \frac{9 \eta L^2}{G_0^2}  \left( \| \Z_{k} - \bar{\Z}_k \|^2_F + \| \Z_{k-1} - \bar{\Z}_{k-1} \|^2_F\right) \\
 \nonumber
 & +  \frac{9 \eta}{G_0^2} \left( L^2  \|\bar{\Z}_k - \bar{\Z}_{k-1}\|^2+ (\beta_{1,k}-\beta_{1,k-1})^2 \|\bg_{k}\|^2_F\right)\\
  & +  \frac{9 \eta}{G_0^2} \left(\|\beps_{k}\|^2_F+ \|\beps_{k-1}\|^2_F\right).
\end{align}
Substituting \eqref{eq:R2khalh} and \eqref{eq:N_update} into \eqref{eq:R2k}, we obtain 
\begin{align}\label{eq:sumd0}
\nonumber
& \quad \left\|\bar\z_k-\bar\x_k\right\|^2 + \left\|\bar{\x}_{k-1}-\bar\z_k\right\|^2 \leq  \frac{R_{2,0,k}}{MG_{0}} 
\\ \nonumber
&+ \frac{2 \eta }{MG_{0}} \left(\beta_{1,k}  MD G_{\infty}\sqrt{du_c^{-1}}   + \sqrt{M}D G_{\infty}^2  \|\Vt_k-\Vt_{k-1} \|_{1,1} +  \boldsymbol{\Theta}_k \right) 
\\
\nonumber 
& + \frac{18 \eta^2 G_{\infty}}{MG_{0}}  \left(2Md\beta^2_{1,k-1}u_c^{-1} +  G_{\infty}^2  \|\Vt_k-\Vt_{k-1} \|_{1,1}^2\right) 
\\
\nonumber
 & +  \frac{18 \eta^2 L^2 G_{\infty}}{MG_0^3}  \left(  \| \Z_{k} - \bar{\Z}_k \|^2_F +  \| \Z_{k-1} - \bar{\Z}_{k-1} \|^2_F\right) 
 \\
 \nonumber
 & +  \frac{18 \eta^2 G_{\infty}}{MG_0^3} \left( L^2  \|\bar{\Z}_k - \bar{\Z}_{k-1}\|_F^2+ (\beta_{1,k}-\beta_{1,k-1})^2 \|\bg_{k}\|^2_F\right)
 \\
  & +  \frac{18 \eta^2 G_{\infty}}{MG_0^3} \left(\|\beps_{k}\|^2_F+ \|\beps_{k-1}\|^2_F\right). 
\end{align}
It follows from Lemma~\ref{lem:hadamard}\ref{itm:one} that 
\begin{align}\label{sumd1} 
\nonumber 
& \quad \sum_{k = 1}^N \|\bar\Z_k - \bar\Z_{k-1}\|_F^2 = M \sum_{k=1}^{N} \|\bar{\z}_k - \bar{\z}_{k-1}\|^2\\
\nonumber
& \leq  2M \sum_{k=1}^{N}\|\bar{\z}_k - \bar{\x}_{k-1}\|^2+  2M \sum_{k=1}^{N} \|\bar{\x}_{k-1}- \bar{\z}_{k-1}\|^2\\
& = 2M \sum_{k=1}^{N}\|\bar{\z}_k - \bar{\x}_{k-1}\|^2+  2M \sum_{k=1}^{N} \|\bar{\x}_{k}- \bar{\z}_{k}\|^2, 
\end{align}
where the equality follows since by our assumption $\bar\x_0= \bar \z_0=0$.

Now,  by using Lemmas~\ref{lem:v_diff},~\ref{lem:xstar}, summing~\eqref{eq:sumd0} over $k=1, \cdots, N$ and using \eqref{sumd1}, we obtain 

\begin{align}
\nonumber  
& \quad  \left(1- \frac{36\eta^2 L^2 G_{\infty}}{G_0^3}\right)  \sum_{k=1}^{N}\E\left\|\bar{\x}_{k-1}-\bar\z_k\right\|^2 +  \left(1- \frac{36\eta^2 L^2 G_{\infty} }{G_0^3}\right) \sum_{k=1}^{N}\E\left\|\bar\z_k-\bar\x_k\right\|^2  
\\
\nonumber
& \leq   \frac{\eta \sqrt{du_c^{-1}}\rho^{2t} \beta_{1,1}}{G_0D(1-\rho^t) (1-\kappa)} +  N\eta \frac{\rho^t}{1-\rho^t} \left[G_{\infty}  D^{-1}\sqrt{d}\right] +\frac{d}{G_0\sqrt{M}}
\\
\nonumber 
&+ \frac{2 \eta D G_{\infty}}{G_0} \bigg(\frac{\beta_{1,1} }{1-\kappa} \sqrt{du_c^{-1}} +\frac{\sqrt{M} G_{\infty} d}{G_{0}^2}\bigg)    
\\
\nonumber
& + \frac{36 \eta^2 d \beta_{1,1}^2 G_{\infty}}{u_c(1-\kappa^2)G_0} +  \frac{18 \eta^2d G_{\infty}^3}{G_0^3} + \frac{36\eta^2  L^2 G_{\infty}}{MG_0^3} \sum_{k= 1}^N \sum_{i = 1}^M\E \|\z_{i,k}-\bar\z_k\|^2 
\\
& + \frac{18\eta^2 G_{\infty}}{MG_0^3}  \sum_{k=1}^{N} (\beta_{1,k} - \beta_{1,k-1})^2 \E\|\sbg_k\|^2 + \frac{36 \eta^2 G_{\infty}}{M G_{0}^3 }\sum_{k = 1}^N \frac{\sigma^2}{m_k} = :\text{R.H.S.}.
\end{align}
Note that if  $\eta \leq \sqrt{\frac{G_{0}^3}{72 L^2 G_{\infty}}}$, then  $1- \frac{36\eta^2 L^2 G_{\infty}}{G_0^3} \geq \frac{1}{2}$. Thus, from \eqref{eq:r12}, we get
\begin{align}\label{eq:rd2}
 \frac{1}{N}\sum_{k=1}^{N} \E [R_{2,k}] &=  \sum_{k=1}^{N} \E\left\|\bar{\x}_{k-1}-\bar\z_k\right\|^2 + \E\left\|\bar\z_k-\bar\x_k\right\|^2   \leq \frac{2}{N} \text{R.H.S.},  
\end{align}
\begin{align}\label{eq:revise_1}
    \frac{1}{N} \sum_{k=1}^{N} \E[R_{1,k}]  &=  \frac{3d\beta_{1,1}^2}{Nu_c(1-\kappa^2)} +  \frac{3}{NMG_0^2}\sum_{k = 1}^N \frac{\sigma^2}{m_k}+ \frac{3L^2}{NM G_0^2}\sum \limits_{k = 1}^N \sum \limits_{i = 1}^M\E\|\z_{k,i} - \bar{\z}_{k}\|^2
\end{align}

By combining \eqref{eq:revise_1} with \eqref{eq:thm_main} we obtain \begin{align*} 
&\qquad \eta^2 (1-\beta_{1,1})^2 G_{\infty}^{-2}\frac{1}{N}\sum_{k = 1}^N \E\|\sbg_k\|^2  
\\
& \leq \frac{3}{N}  \sum_{k = 1}^N  \left(\eta^2 \E[R_{1.k}] + \E[R_{2,k}] \right) \leq  \frac{ 9 \eta^2 d \beta_{1,1}^2}{Nu_c (1-\kappa^2)} + \frac{9\eta^2 }{NMG_0^2} \sum_{k = 1}^N \frac{\sigma^2}{m_k} + \frac{ 9\eta^2 L^2}{G_0^2} \bigg(\frac{1}{MN}\sum \limits_{k = 1}^N \sum \limits_{i = 1}^M\E\|\z_{k,i} - \bar{\z}_{k}\|^2 \bigg)
\\
&+  \frac{6\eta \sqrt{du_c^{-1}}\rho^{2t} \beta_{1,1}}{NG_0D(1-\rho^t) (1-\kappa)} +  6\eta \frac{\rho^t}{1-\rho^t} \left[G_{\infty}  D^{-1}\sqrt{d}\right] +\frac{6d}{NG_0\sqrt{M}}
\\
&+ \frac{12 \eta D G_{\infty}}{NG_0} \bigg(\frac{\beta_{1,1} }{1-\kappa} \sqrt{du_c^{-1}} +\frac{\sqrt{M} G_{\infty} d}{G_{0}^2}\bigg) 
\\
&+\frac{216 \eta^2 d \beta_{1,1}^2 G_{\infty}}{Nu_c(1-\kappa^2)G_0} +  \frac{108 \eta^2d G_{\infty}^3}{NG_0^3} + \frac{216\eta^2  L^2 G_{\infty}}{G_0^3}  \bigg( \frac{1}{MN}\sum \limits_{k = 1}^N \sum \limits_{i = 1}^M\E\|\z_{k,i} - \bar{\z}_{k}\|^2 \bigg)
\\
&  +\frac{108\eta^2 G_{\infty}}{MG_0^3}  \sum_{k=1}^{N} (\beta_{1,k} - \beta_{1,k-1})^2 \E\|\sbg_k\|^2 + \frac{216 \eta^2 G_{\infty}}{MN G_{0}^3}  \sum_{k = 1}^N \frac{\sigma^2}{m_k}. 
\end{align*}

 Using the fact that  $ \frac{\rho^{2t}}{1-\rho^{2t}} \leq \frac{\rho^t}{1-\rho^t}$, together with Lemma~\ref{lem:network_power_2} and rearranging the above inequality yields 
\begin{align*}
&\frac{C_0}{N}\sum_{k = 1}^N \E\|\sbg_k\|^2  \leq \frac{C_1}{N} + \frac{C_2 }{MN} \sum_{k = 1}^N \frac{\sigma^2}{m_k} + \frac{C_3 M \rho^t}{1-\rho^t},
\end{align*}
where 
\begin{align}\label{eq:revised_2}
\nonumber
& C_0 :=  \eta^2 (1-\beta_{1,1})^2 G_{\infty}^{-2} - \frac{108\beta_{1,1}^2\eta^2 G_{\infty}}{MG_0^3}
\\
\nonumber 
& C_1:=  \frac{ 9 \eta^2 d \beta_{1,1}^2}{u_c (1-\kappa^2)} + \frac{18\eta^
4 L^2 Mdu_c^{-1} \beta_{1,1}^2 \rho^{2t}}{G_0^2(1-\kappa^2)(1-\rho^{2t})}
\\ \nonumber
&  \qquad +\frac{6\eta \sqrt{du_c^{-1}}\rho^{2t} \beta_{1,1}}{D(1-\rho^t) (1-\kappa)G_0} +\frac{6d}{\sqrt{M}G_0} 
\\ \nonumber
& \qquad + \frac{12 \eta D G_{\infty}}{G_0} \bigg(\frac{\beta_{1,1} }{1-\kappa} \sqrt{du_c^{-1}} +\frac{\sqrt{M} G_{\infty} d}{G_{0}^2}\bigg) 
\\ \nonumber
& \qquad+ \frac{216 \eta^2 d \beta_{1,1}^2 G_{\infty}}{u_c(1-\kappa^2)G_0} +  \frac{108 \eta^2d G_{\infty}^3}{G_0^3} + \frac{432\eta^4  L^2 G_{\infty}}{G_0^3}  \bigg( \frac{ Mdu_c^{-1} \beta_{1,1}^2 \rho^{2t}}{(1-\kappa^2)(1-\rho^{2t})}\bigg),
\\ \nonumber
&  C_2:=   \frac{9\eta^2 }{G_0^2} + \frac{216 \eta^2 G_{\infty} }{ G_{0}^3 }, 
\\
&  C_3:=  \frac{ 18\eta^4 L^2}{G_0^2} \frac{d}{1-\beta_2}  + 6\eta \frac{G_{\infty} \sqrt{d}}{MD}  + \frac{432 \eta^4 L^2}{G_0^2 } \frac{G_{\infty}}{G_0} \frac{d}{1-\beta_2}. 
\end{align}
Then, we assume that $\beta_{1,1} \leq \frac{1}{1+\sqrt{108 G_{\infty}/{MG_0^3}}}$ and divide both sides by $ C_0$ to obtain

\begin{align*}
&\frac{1}{N}\sum_{k = 1}^N \E\|\sbg_k\|^2  \leq \frac{B_1}{N} + \frac{B_2 }{MN} \sum_{k = 1}^N \frac{\sigma^2}{m_k} + \frac{B_3 M\rho^t}{1-\rho^t},
\end{align*}
where
\begin{align}\label{eq:revised_const_global} 
B_i = \frac{C_i}{C_0},  ~~\forall i \in\{1,2,3\}.
\end{align}
This completes the proof of the Theorem.
\end{proof}
\section*{Samples generated from different algorithms}
We provide selected samples of images generated by different algorithms used in our comparisons. 

\begin{figure}[H]
\begin{subfigure}{.5\textwidth}
  \centering
  \includegraphics[width=.8\linewidth]{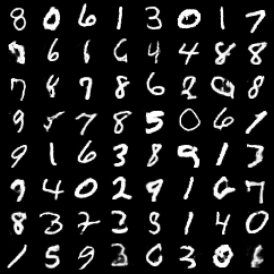}
  \caption{\DADAM}
  \label{fig:sfig1}
\end{subfigure}%
\begin{subfigure}{.5\textwidth}
  \centering
  \includegraphics[width=.8\linewidth]{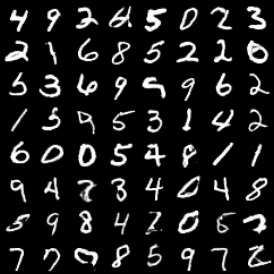}
  \caption{\CADAM}
  \label{fig:sfig1}
\end{subfigure}%

\begin{subfigure}{.5\textwidth}
  \centering
  \includegraphics[width=.8\linewidth]{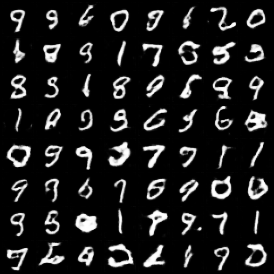}
  \caption{\DSGD}
  \label{fig:sfig1}
\end{subfigure}%
\begin{subfigure}{.5\textwidth}
  \centering
  \includegraphics[width=.8\linewidth]{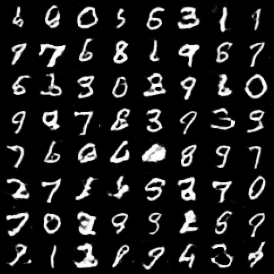}
  \caption{\CSGD}
  \label{fig:sfig1}
\end{subfigure}%

\caption{Generated MNIST Samples for Decentralized Experiment }
\label{fig:sample_mnist}

\end{figure}

\begin{figure}[H]
\begin{subfigure}{.5\textwidth}
  \centering
  \includegraphics[width=.8\linewidth]{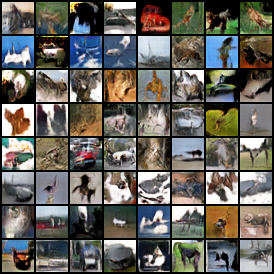}
  \caption{\DADAM}
  \label{fig:sfig1}
\end{subfigure}%
\begin{subfigure}{.5\textwidth}
  \centering
  \includegraphics[width=.8\linewidth]{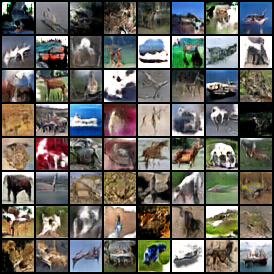}
  \caption{\CADAM}
  \label{fig:sfig1}
\end{subfigure}%

\begin{subfigure}{.5\textwidth}
  \centering
  \includegraphics[width=.8\linewidth]{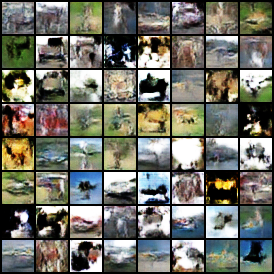}
  \caption{\DSGD}
  \label{fig:sfig1}
\end{subfigure}%
\begin{subfigure}{.5\textwidth}
  \centering
  \includegraphics[width=.8\linewidth]{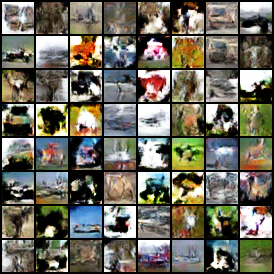}
  \caption{\CSGD}
  \label{fig:sfig1}
\end{subfigure}%

\caption{Generated CIFAR-10 Samples for Decentralized Experiment}
\label{fig:sample_cifar}
\end{figure}

\begin{figure}[H]
\begin{subfigure}{.5\textwidth}
  \centering
  \includegraphics[width=.8\linewidth]{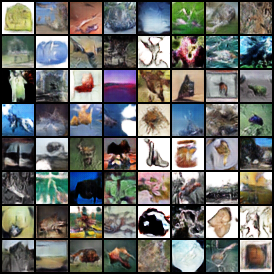}
  \caption{\DADAM}
  \label{fig:sfig100}
\end{subfigure}%
\begin{subfigure}{.5\textwidth}
  \centering
  \includegraphics[width=.8\linewidth]{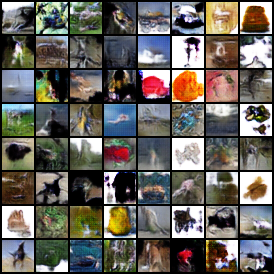}
  \caption{\CADAM}
  \label{fig:sfig100}
\end{subfigure}%

\begin{subfigure}{.5\textwidth}
  \centering
  \includegraphics[width=.8\linewidth]{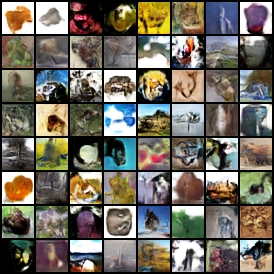}
  \caption{\DSGD}
  \label{fig:sfig100}
\end{subfigure}%
\begin{subfigure}{.5\textwidth}
  \centering
  \includegraphics[width=.8\linewidth]{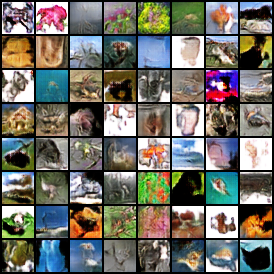}
  \caption{\CSGD}
  \label{fig:sfig100}
\end{subfigure}%

\caption{Generated CIFAR-100 Samples for Decentralized Experiment}
\label{fig:sample_cifar_100}
\end{figure}

\begin{figure}[H]
\begin{subfigure}{.5\textwidth}
  \centering
  \includegraphics[width=.8\linewidth]{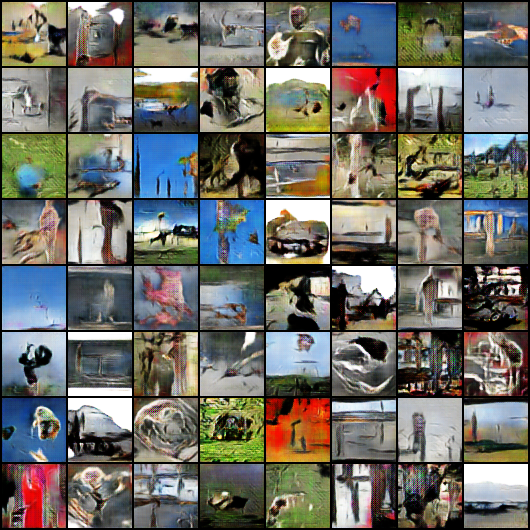}
  \caption{\DADAM}
  \label{fig:sfig100}
\end{subfigure}%
\begin{subfigure}{.5\textwidth}
  \centering
  \includegraphics[width=.8\linewidth]{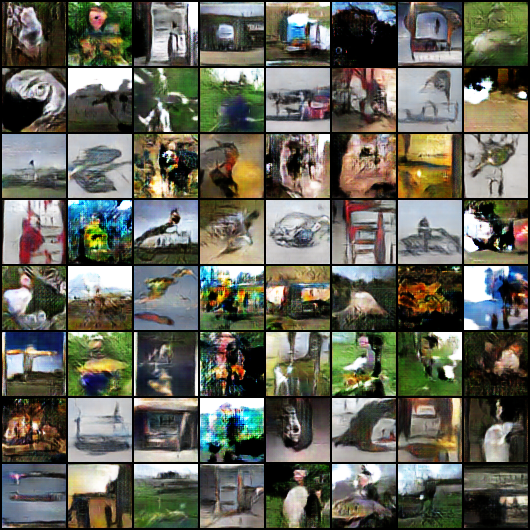}
  \caption{\CADAM}
  \label{fig:sfig100}
\end{subfigure}%

\begin{subfigure}{.5\textwidth}
  \centering
  \includegraphics[width=.8\linewidth]{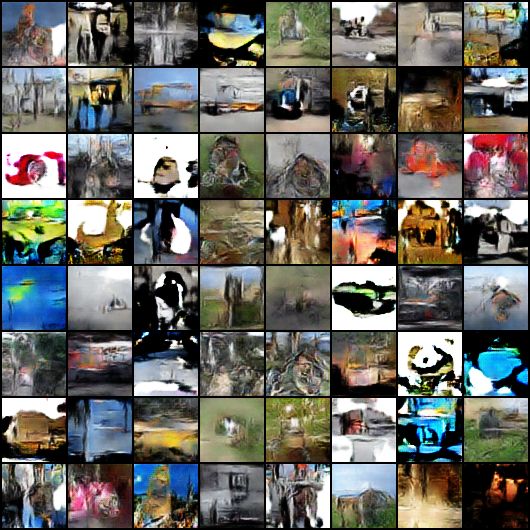}
  \caption{\DSGD}
  \label{fig:sfig100}
\end{subfigure}%
\begin{subfigure}{.5\textwidth}
  \centering
  \includegraphics[width=.8\linewidth]{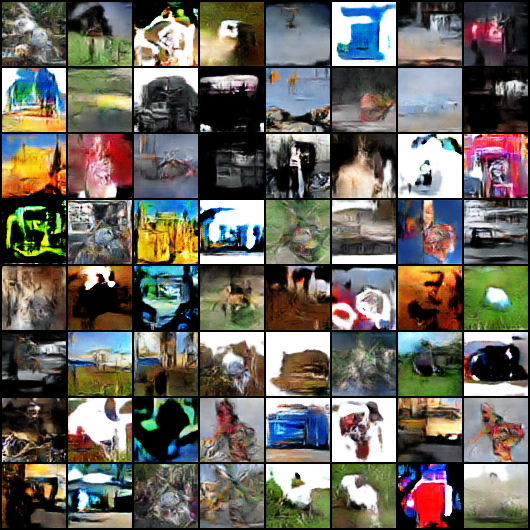}
  \caption{\CSGD}
  \label{fig:sfig100}
\end{subfigure}%

\caption{Generated Imagenette Samples for Decentralized Experiment}
\label{fig:sample_imagnet}
\end{figure}

\begin{figure}[H]
\begin{subfigure}{.5\textwidth}
  \centering
  \includegraphics[width=.8\linewidth]{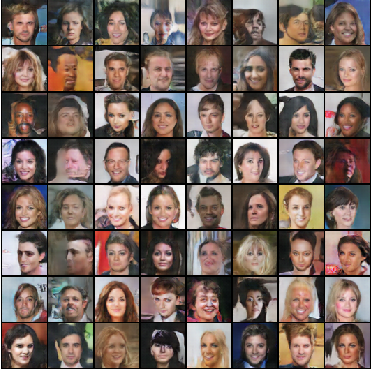}
  \caption{\ADAM}
  \label{fig:celeba_adam3}
\end{subfigure}%
\begin{subfigure}{.5\textwidth}
  \centering
  \includegraphics[width=.8\linewidth]{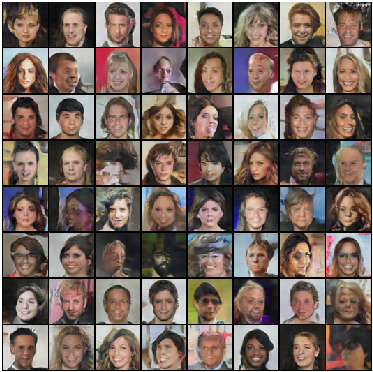}
  \caption{OAdagrad}
  \label{fig:celeba_oada}
\end{subfigure}%

\begin{subfigure}{.5\textwidth}
  \centering
  \includegraphics[width=.8\linewidth]{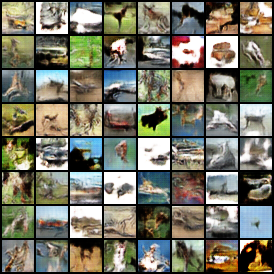}
  \caption{\ADAM}
  \label{fig:cifar10_adam3}
\end{subfigure}%
\begin{subfigure}{.5\textwidth}
  \centering
  \includegraphics[width=.8\linewidth]{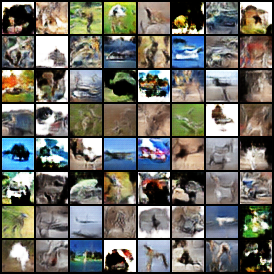}
  \caption{OAdagrad}
  \label{fig:cifar1_oada}
\end{subfigure}%

\caption{Generated CelebA (top row) and CIFAR-10 (bottom row) Samples for Single Computing Node Experiment}
\label{fig:sample_celeba_cifar}
\end{figure}

\end{document}